\newtheorem{theorem}{Theorem}[section]
\newtheorem{proposition}[theorem]{Proposition}
\newtheorem{definition}[theorem]{Definition}
\newtheorem{corollary}[theorem]{Corollary}
\newtheorem{lemma}[theorem]{Lemma}
\newtheorem{problem}{Problem}
\theoremstyle{remark}
\newtheorem{remark}[theorem]{Remark}
\def\<{\langle}
\def\>{\rangle}
\begin{document}
\title{\bf{Mean Curvature Rigidity and Non-rigidity Results on Spherical Caps}}

\author{Shibing Chen%
  \thanks{School of Mathematical Sciences, University of Science and Technology of
China, Hefei 230026, P.R. China. \texttt{chenshib@ustc.edu.cn}. Research of Chen was supported by National Key R and D program of China 2020YFA0713100 and NSFC
grant 11201486..}
  \and
Xiang Ma%
  \thanks{LMAM, School of Mathematical Sciences, Peking University,
 Beijing 100871, P.R. China. \texttt{maxiang@math.pku.edu.cn}, Fax:+86-010-62751801. Corresponding author. Supported by NSFC grant 11831005.}
\and
 Shengyang Wang%
  \thanks{School of Mathematical Sciences, Peking University,
 Beijing 100871, People's Republic of China. \texttt{1900010752@pku.edu.cn}. } }

\date{\today}
\maketitle

\begin{center}
{\bf Abstract}
\end{center}

\hspace{2mm}
We prove that a hemisphere in the Euclidean space $R^{n+1}$, viewed as the graph of a function, admits no smooth perturbations as graphs with mean curvature $H\ge 1$ whose boundary equator is fixed up to $C^2$. This is an extension of the \emph{Mean Curvature Rigidity} phenomenon discovered by Gromov and Souam on non-compact totally umbilic hypersurfaces in space forms. The proof uses a Tangency Principle. On the other hand, we show that there exist nontrivial smooth perturbations with $H\ge 1$ on a great spherical cap whose boundary is fixed up to $C^2$. Similar results hold true for perturbations decreasing $H$, and for the $r$ mean curvature function $H_r$. This contrast between rigidity and non-rigidity is even true in the 1-dimensional case for circles and for discrete objects (polygons inscribed in a circle).\\

{\bf Keywords:}  spheres, spherical caps, mean curvature, Gauss-Kronecker curvature, rigidity theorem, Menger curvature \\

%{\bf MSC(2010):\hspace{2mm} ???}

\section{Introduction}

In \cite{Gromov}, Gromov pointed out the following simple and elegant result:

\begin{theorem}\label{thm-gromov}[Gromov 2019]
A hyperplane $M$ in a Euclidean space $\mathbb{R}^{n+1}$ cannot be perturbed on a compact set so
that the perturbed hypersurface $\Sigma$ has mean curvature $H_{\Sigma} \ge 0$ unless $H_{\Sigma}\equiv 0$ and $\Sigma=M$ identically.
\end{theorem}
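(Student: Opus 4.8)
The plan is to reduce the statement to a one--sided comparison for the minimal surface operator and then invoke the strong maximum principle. Normalize coordinates so that $M=\{x_{n+1}=0\}$, and suppose $\Sigma$ coincides with $M$ outside a compact set $K$; being a perturbation of a connected hyperplane, $\Sigma$ is connected and equal to $M$ near infinity. In the graphical case considered throughout this paper, $\Sigma=\{x_{n+1}=u(x')\}$ with $u\in C^\infty(\R^n)$ and $u\equiv 0$ off a compact set, and, for the orientation of $\Sigma$ agreeing with the constant normal of $M$ at infinity, the hypothesis $H_\Sigma\ge 0$ says exactly that $u$ is a supersolution of the minimal surface equation: $Q[u]:=\mathrm{div}\bigl(\nabla u/\sqrt{1+|\nabla u|^2}\bigr)\le 0=Q[0]$. (With the opposite sign convention for $H$ one gets a subsolution; since $Q$ is odd, $u\mapsto -u$ interchanges the two cases, so it suffices to treat $Q[u]\le 0$.)

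First I would dispose of the graphical case. Writing $Q[u]-Q[0]=L_u u$ with $L_u:=\int_0^1 DQ|_{tu}\,\d t$, the operator $L_u$ is linear, has smooth coefficients (as $u$ is smooth), is uniformly elliptic on every compact set (the linearization of $Q$ is elliptic), and has no zeroth--order term (since $Q$ depends only on $\nabla u$ and $D^2u$). Hence $L_uu=Q[u]\le 0$ on any ball $B_R\supset\operatorname{supp}u$ with $u=0$ on $\partial B_R$, so the weak maximum principle gives $u\ge 0$ on $B_R$, hence on $\R^n$. But then $u$ attains its minimum value $0$ at every point of the nonempty open set $B_R\setminus\operatorname{supp}u$; by the strong maximum principle a supersolution of $L_u$ with an interior minimum is constant, so $u\equiv 0$. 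Thus $\Sigma=M$ and $H_\Sigma\equiv 0$.

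For a general embedded perturbation I would first pin $\Sigma$ to one side of $M$ using parallel hyperplanes $M_t:=\{x_{n+1}=t\}$. Slide $M_t$ upward from $t=-\infty$; since $\Sigma=M$ near infinity there is a first contact value $t_0=\min_\Sigma x_{n+1}\le 0$, at which $\Sigma$ lies in $\{x_{n+1}\ge t_0\}$ and touches $M_{t_0}$ at an interior point $p$ (necessarily in $K$ if $t_0<0$) where $\Sigma$ is a local graph. If $t_0<0$, then near $p$ this graph lies above the constant solution $t_0$ and is itself a supersolution of the minimal surface equation, so the Tangency Principle --- i.e.\ the strong maximum principle applied to the difference of the two mean--curvature operators, which by the linearization above is a locally uniformly elliptic operator with no zeroth--order term --- forces $\Sigma$ to coincide with $M_{t_0}$ near $p$. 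Since the set of such points is open and closed in the connected manifold $\Sigma$, this gives $\Sigma=M_{t_0}$, contradicting $\Sigma=M$ near infinity. Hence $t_0=0$: $\Sigma$ lies in $\{x_{n+1}\ge 0\}$ and touches $M$. A last application of the strong maximum principle at a touching point, propagated by connectedness exactly as before, gives $\Sigma\subseteq M$, hence $\Sigma=M$ and $H_\Sigma\equiv 0$.

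There is no deep obstacle here: this is Gromov's ``simple and elegant'' observation, and its whole content is the strong maximum principle (equivalently, the Tangency Principle). The points demanding care are the linearization step --- checking that the difference of mean--curvature operators of two tangentially touching graphs is $L(u_1-u_2)$ for a linear, locally uniformly elliptic $L$ with vanishing zeroth--order coefficient, so that a one--sided interior contact forces local and hence, by connectedness, global coincidence --- the correct matching of the sign convention for $H$ with the side to which $\Sigma$ is pinned, and, in the non--graphical case, the routine verifications that $\Sigma$ is connected and asymptotically flat and that it is a graph near each touching point.
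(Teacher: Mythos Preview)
Your proof is correct and is precisely Souam's elementary argument that the paper cites (and then abstracts as the trap-slice lemma in Section~3): slide parallel hyperplanes until first tangential contact with $\Sigma$, apply the strong maximum principle for the mean-curvature operator at the contact point, and propagate by connectedness. The paper does not give its own proof of this particular theorem, but the machinery of Sections~2--3 is exactly this method; your explicit linearization $Q[u]-Q[0]=L_u\,u$ with no zeroth-order term is simply the Tangency Principle unpacked, so your write-up and the paper's framework are the same approach at two levels of abstraction.

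Two small remarks. First, your separate treatment of the graphical case is logically redundant---the general embedded argument already covers it---though it does make the role of the maximum principle transparent. Second, your hedge on the sign convention for $H$ (``with the opposite sign one gets a subsolution; $u\mapsto -u$ interchanges the two cases'') is well placed: depending on convention one slides from above rather than below, and with the paper's convention (unit sphere has $H=1$, Tangency Principle as in Theorem~2.2) the cleaner match is to slide from above, so that $M_{t_1}\ge\Sigma$ and $H(\Sigma)\ge 0=H(M_{t_1})$ is exactly the hypothesis $H_r^2\ge H_r^1$ of Theorem~2.2. Either direction works, since both reduce to the same strong maximum principle.
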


This theorem originated from Gromov's deep study of manifolds with non-negative scalar curvature
(which can be traced back further to the Positive Mass Theorem). Very soon, Souam \cite{Souam} gave an \emph{elementary} proof of this theorem. Moreover, Souam generalized Gromov's result to various noncompact \emph{spheres} in the hyperbolic space as below.

\begin{theorem}\label{thm-souam}[Souam 2021]
Let $M$ denote a horosphere, a hypersphere or a hyperplane in a hyperbolic
space $\mathbb{H}^{n+1}, n\ge 2$ and $H_M \ge 0$ be its constant mean curvature. Let $\Sigma$ be a connected properly embedded $\mathcal{C}^2$-hypersurface in $\mathbb{H}^{n+1}$ which coincides with $M$ outside a compact subset of $\mathbb{H}^{n+1}$. If the mean curvature of $\Sigma$ satisifes $H_{\Sigma}\ge H_M$, then $\Sigma = M$.
\end{theorem}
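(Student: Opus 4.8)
The plan is to derive the rigidity from the \emph{Tangency Principle} --- the geometric form of Hopf's strong maximum principle, available because in local graph coordinates the prescribed mean curvature equation is quasilinear elliptic --- applied against a one-parameter family of totally umbilic hypersurfaces that sweeps across $\Sigma$. Recall that the totally umbilic hypersurfaces of $\H^{n+1}$ are the totally geodesic hyperplanes ($H=0$), their equidistant hypersurfaces ($0<|H|<1$), the horospheres ($|H|=1$) and the geodesic spheres ($|H|>1$; being compact, these are absent from the statement). In each of the three cases $M$ is a leaf of a foliation $\{M_t\}_{t\in\R}$ of $\H^{n+1}$ by pairwise disjoint totally umbilic hypersurfaces: the concentric horospheres sharing the ideal center of $M$ (all with $H\equiv 1$) when $M$ is a horosphere, and the level sets of the signed distance to a fixed totally geodesic hyperplane $P$ --- with $P$ itself one of the leaves --- when $M$ is a hyperplane ($P=M$) or a hypersphere ($P$ the axis of $M$). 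Orienting $M_t$ by a unit normal $\nu_t$ that varies continuously in $t$ and points toward increasing $t$, the mean curvature $h(t):=H_{M_t}$ with respect to $\nu_t$ is monotone in $t$ (constant $\equiv 1$ in the horosphere case), and one arranges the parametrization so that $M=M_{t_*}$, $h(t_*)=H_M$ and $h(t)\le H_M$ for every $t\ge t_*$.

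\emph{Trapping and first contact.} Since $\Sigma$ agrees with $M$ off a compact set $K$, it is caught between two leaves: $\Sigma\subset\bigcup_{a\le t\le b}M_t$ for some $a\le t_*\le b$, and $\Sigma\setminus K=M\setminus K$ meets $M_{t_*}$. Slide $M_t$ from $t=b$, where it misses $\Sigma$, toward decreasing $t$, and let $t_1:=\sup\{t:M_t\cap\Sigma\ne\emptyset\}$. A compactness argument, using that $\Sigma$ is properly embedded, shows $t_1\ge t_*$, that this supremum is attained at some point $p\in M_{t_1}\cap\Sigma$, and that $\Sigma$ lies entirely in the closed region $\bigcup_{t\le t_1}M_t$, i.e. on the side of $M_{t_1}$ into which $-\nu_{t_1}$ points.

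\emph{Tangency and propagation.} At $p$ the hypersurfaces $\Sigma$ and $M_{t_1}$ are tangent with $\Sigma$ on the $(-\nu_{t_1})$-side, and --- using the connectedness of $\Sigma$ and the coincidence with $M$ outside $K$ --- one checks that at $p$, and in fact at every point of $\Sigma\cap M_{t_1}$, the unit normal of $\Sigma$ with respect to which $H_\Sigma\ge H_M$ equals $\nu_{t_1}$ (all leaves $M_t$ with $t>t_1$ are disjoint from $\Sigma$ and, being connected and on the $\nu_{t_*}$-side of $M$ outside $K$, lie in the component of $\H^{n+1}\setminus\Sigma$ toward which this normal points). Writing $\Sigma$ and $M_{t_1}$ near $p$ as graphs $u_\Sigma\le u_{M_{t_1}}$ over $T_pM_{t_1}$, heights measured toward $\nu_{t_1}$ and $u_\Sigma(p)=u_{M_{t_1}}(p)$, we get $\mathcal H[u_\Sigma]=H_\Sigma(\nu_{t_1})\ge H_M\ge h(t_1)=\mathcal H[u_{M_{t_1}}]$ for the quasilinear elliptic mean curvature operator $\mathcal H$ on graphs; hence $w:=u_{M_{t_1}}-u_\Sigma\ge 0$ satisfies a linear elliptic inequality $Lw\le 0$ with no zeroth-order term and vanishes at the interior point $p$, so $w\equiv 0$ by the strong maximum principle and $\Sigma=M_{t_1}$ near $p$. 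Thus $\Sigma\cap M_{t_1}$ is nonempty and closed in $\Sigma$, and --- applying the same argument at each of its points, legitimate by the one-sidedness just noted --- also open in $\Sigma$; connectedness gives $\Sigma\subset M_{t_1}$, and then $\Sigma=M_{t_1}$, being open (invariance of domain) and closed (properness) in the connected leaf $M_{t_1}$. Since $\Sigma=M_{t_1}$ contains $M\setminus K$, it meets $M=M_{t_*}$, so $t_1=t_*$ (distinct leaves are disjoint) and $\Sigma=M$.

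\emph{Main obstacle.} The delicate step is the sign bookkeeping in the first paragraph together with the orientation check at $p$: the sweep must come from the side for which the first leaf to touch $\Sigma$ has mean curvature \emph{at most} $H_M$ relative to the normal pointing back toward where the sweep started, since from the opposite side the inequality feeding the Tangency Principle is reversed --- and then the conclusion is genuinely false, as the non-rigidity examples constructed later in the paper show (they use precisely that it is the one-sided bound $H_\Sigma\ge H_M$, and not merely constancy of $H$, that is in force). A secondary, routine matter is that $\Sigma$ is only $C^2$ and need not be a global graph, so the maximum principle is run in a coordinate chart around each contact point.
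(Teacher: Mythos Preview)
Your argument is correct and is precisely Souam's moving-plane proof: sweep a foliation of totally umbilic leaves (concentric horospheres, or equidistants to a totally geodesic hyperplane) from the side on which the leaves have mean curvature at most $H_M$, find the first tangential contact, and apply the Tangency Principle to propagate $\Sigma=M_{t_1}$ by an open--closed argument. The paper does not give an independent proof of this theorem---it is quoted from \cite{Souam}---but it repackages exactly this mechanism as the \emph{trap-slice lemma} (Theorem~\ref{thm-trap-slice} and Corollary~\ref{cor-trap-slice}) and uses it to prove the reversed-inequality companion (Theorem~\ref{thm-souam+}); your sweep, your orientation check at the contact point, and your use of connectedness correspond one-to-one with the trap $\Omega$, the slice $\{F_t\}$ with $H_r(F_t)\le\alpha$, and the argument in the proof of Theorem~\ref{thm-trap-slice}, so the two approaches are the same up to presentation.
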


Souam's proof is elementary in the sense that it invokes only the classical technique called the \emph{moving plane method}, which dates back to E. Hopf. Roughly speaking, it comes from the following picture and intuition.\\

\noindent
\textbf{The Tangency Principle.}

Let $M_1$ and $M_2$ be two embedded $\mathcal{C}^2$-hypersurfaces in a Riemannian manifold $N$ with or without boundary. Suppose that they are tangent at a point $p$, where they share the same normal vector $\eta_0$. Locally they can be viewed as graphs over the same tangent plane $W$ and suppose $M_1$ is \emph{above} $M_2$ (denoted as $M_1\ge M_2$) with respect to the \emph{upwards} direction of $\eta_0$.
%Intuitively, in this case $M_1$ looks \emph{more curved} than $M_2$ with respect to this $\eta_0$.
If the inequality is reversed and their $r$-mean curvatures satisfy $H_r(M_1) \le \alpha \le H_r(M_2)$ in a neighborhood, then in many quite natural cases, these conditions force $M_1=M_2$ in a neighborhood of $p$. Such criterions are known as the Tangency Principle.\\

This principle, which essentially follows from the maximum principle of elliptic PDE, has various versions and generalizations, and has already been widely used in the study of minimal and CMC surfaces. For detailed explanation and the general version, we refer to F. Fontenele and Sergio L. Silva's work \cite{Fontenele}, see Section~2. Souam's Tangency Principle \cite{Souam} is a special case for the mean curvature $H$. With this stronger Tangency Principle, the mean curvature rigidity results of Gromov and Souam can be extended to the $r$-mean curvature function $H_r$.

These results motivated our curiosity: Can these rigidity results be generalized to other curvature functions? (The answer is YES for $H_r$ by the Tangency Principle in \cite{Fontenele}.) Can we find similar rigidity phenomenon for other hypersurfaces like the minimal hypersurfaces, round cylinders, and more? In particular, can we generalize them to the compact ones, like the standard sphere $S^n\subset \mathbb{R}^{n+1}$?

The last problem seems quite naive, since one can increase (or decrease) the curvature of the round sphere by shrinking (or enlarging) its radius. To avoid such simple counter-examples to the desired curvature rigidity results, we need to make some restriction on the allowed perturbations.

In our project, different from Gromov and Souam's situation, here we are interested in smooth perturbations of a compact hypersurface $M$, yet with substantial part $B_0\subset M$ fixed. In particular, in this paper we only consider perturbing a cap of the sphere smoothly and increasing (or decreasing) the r-mean curvature $H_r$, while the complementary spherical cap is fixed (or equivalently, the boundary of the perturbed region is fixed up to $C^2$).
We find that the answer depends on the size of the perturbed region $B_0$. In this paper we make the following\\

\noindent
\textbf{Convention:}

  (1) The sphere $S^n\subset \mathbb{R}^{n+1}$ has radius $1$ (the unit sphere). Its mean curvature $H=1$ is the arithmetic mean of all the principal curvatures.

  (2) $S^+_{\theta}$ is a spherical cap with half central angle $\theta$ (intrinsically, a geodesic ball of $S^n$ centering at the north pole $N$ with radius $\theta$). It is a small spherical cap when $0<\theta<\pi/2 $, a great spherical cap when $\pi/2<\theta<\pi $, and a hemisphere when $\theta =\pi/2$. Denote its complementary part in $S^n$ as $S^-_{\theta}$, which is also a spherical cap with half central angle $\pi-\theta$.

  (3) A smooth perturbation $\Sigma$ of the spherical cap $S^+_{\theta}$ is a smooth hypersurface coincide with $S^n$ on $S^-_{\theta}$; in other words, we are considering the truly perturbed part as a deformation $\Sigma_0$ of $S^+_{\theta}$ which is compact (but $\Sigma_0$ is allowed to be of other topological type); $\Sigma_0$ and $S^+_{\theta}$ share the same boundary up to $\mathcal{C}^2$.

Our rigidity results can be summarized as below (see Section~4 for more general conclusions and proofs):

\begin{theorem}\label{thm-rigid-0}
Suppose $\Sigma$ is a smooth perturbation of the round sphere $S^n$ which coincide with $S^n$ on a hemisphere, and $\Sigma$ is convex with the r-mean curvature $H_r\ge 1$ everywhere (or $H_r\le 1$ everywhere). Then $\Sigma=S^n$.
\end{theorem}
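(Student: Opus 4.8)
The plan is to reduce the statement to the Tangency Principle of Fontenele--Silva, using a moving-sphere comparison to manufacture the one-sidedness that the principle requires. Normalize so that the fixed hemisphere is the lower one, $S^-_{\pi/2}=\{x\in S^n:x_{n+1}\le 0\}$, and write $x=(x',x_{n+1})$. First I would use convexity to put the perturbed cap $\Sigma_0$ into a standard form. Since $\Sigma=\partial K$ for a convex body $K$ and $\partial K$ contains the lower hemisphere, the tangent hyperplane of $S^n$ at each equatorial point is a supporting hyperplane of $K$; these are precisely the hyperplanes tangent to the solid cylinder $\{|x'|\le 1\}$, so $K$ lies inside that cylinder. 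On the other hand $K$ contains the convex hull of the lower hemisphere, i.e.\ the lower half-ball, and arguing with supporting hyperplanes one sees $K\cap\{x_{n+1}\le 0\}$ is exactly that half-ball. Hence $\Sigma_0=\partial K\cap\{x_{n+1}>0\}$ is the graph of a concave function $f:\overline D\to[0,\infty)$ over the equatorial disk $D=\{|x'|<1\}$ with $f\equiv 0$ on $\partial D$, and the $\mathcal{C}^2$ matching along the equator says $\Sigma_0$ has vertical tangent hyperplanes along $\partial D$, just like the upper unit hemisphere $S^+_{\pi/2}$, the graph of $u_0(x')=\sqrt{1-|x'|^2}$.

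Next I would run a moving-sphere argument to order $\Sigma_0$ against $S^+_{\pi/2}$. For $t\ge 0$ the translated unit sphere $S^n+te_{n+1}$ has upper half the graph of $u_t(x')=t+\sqrt{1-|x'|^2}$ over $D$. In the case $H_r\ge 1$, put $t_0=\inf\{t\ge 0:\ f\le u_t\ \text{on}\ \overline D\}$, which is finite since $f$ is bounded. If $t_0>0$ then $f\le u_{t_0}$ with equality at some point $p$, which cannot lie on $\partial D$ (there $f=0<t_0=u_{t_0}$); so $p$ is interior, and at $p$ the hypersurface $\Sigma_0$ is tangent to and lies below $S^n+t_0e_{n+1}$ with the same normal. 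Since $S^n+t_0e_{n+1}$ has $H_r\equiv 1$, $H_r(\Sigma_0)\ge 1$, and both hypersurfaces are convex (so the operators $H_r$ lie in the cone $\Gamma_r$ where they are elliptic), the interior Tangency Principle forces $\Sigma_0$ to coincide with $S^n+t_0e_{n+1}$ near $p$. A standard open-and-closed argument on the connected disk $\Sigma_0$ then upgrades this to $\Sigma_0\subseteq S^n+t_0e_{n+1}$, which is absurd because $\partial\Sigma_0$ is the unit equator, which does not lie on $S^n+t_0e_{n+1}$ for $t_0>0$. Hence $t_0=0$ and $\Sigma_0\le S^+_{\pi/2}$. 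In the case $H_r\le 1$, the identical argument applied to the downward translates $S^n-te_{n+1}$ gives $\Sigma_0\ge S^+_{\pi/2}$.

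In either case $\Sigma_0$ and $S^+_{\pi/2}$ are now ordered, tangent along the whole equator with matching normals and matching $\mathcal{C}^2$ boundary data, and their $H_r$ are ordered as the Tangency Principle demands. The \emph{boundary} version of the Tangency Principle of Fontenele--Silva then gives $\Sigma_0=S^+_{\pi/2}$ in a neighbourhood of the equator. Finally, the open-and-closed argument on the connected set $\Sigma_0$ (using the interior Tangency Principle at any interior tangency point obtained as a limit) propagates this coincidence to all of $\Sigma_0$, so $\Sigma_0=S^+_{\pi/2}$ and therefore $\Sigma=S^n$. For $r=1$ this is the direct compact analogue of the Gromov--Souam phenomenon; for $r\ge 2$ the convexity hypothesis is precisely what allows the Tangency Principle to be invoked.

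The delicate point is the behaviour at the equator, where $\Sigma_0$ has vertical tangent hyperplanes and any representation of it as a graph over $D$ degenerates: it is exactly here that one must use the intrinsic, coordinate-free form of the Tangency Principle rather than a naive graph-PDE maximum principle, and one must check carefully that its hypotheses hold — the $\mathcal{C}^2$ one-sided contact along the whole boundary, and the requirement that both hypersurfaces stay inside the ellipticity cone $\Gamma_r$ of $H_r$, which is where convexity of $\Sigma$ enters. Verifying these, together with the routine but slightly fiddly claim in Step 1 that convexity forces $\Sigma_0$ to be a graph inside the cylinder with $f\equiv 0$ on $\partial D$, is the main work; the moving-sphere and propagation steps are then standard.
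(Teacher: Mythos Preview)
Your overall architecture—reduce to a graph via convexity, run a moving-sphere comparison, then finish with the boundary Tangency Principle and propagation—is sound, and it is essentially the paper's trap-slice method written out by hand. But the moving-sphere step is applied in the wrong direction in \emph{both} cases, and as written the Tangency Principle does not fire.

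Take the case $H_r(\Sigma_0)\ge 1$. You lower $S^n+te_{n+1}$ until it touches $\Sigma_0$ from above at an interior point $p$. With the inward normal $\eta_0$ (the one for which both surfaces have positive curvatures), $\eta_0$ points roughly downward; hence in the sense of Definition~\ref{def-above} the surface that is geometrically \emph{lower} is the one that is ``above''. So $M_1=\Sigma_0$ is above $M_2=S^n+t_0e_{n+1}$. Theorem~\ref{thm-TP1} then requires $H_r^2\ge H_r^1$, i.e.\ $1\ge H_r(\Sigma_0)$, which is exactly the inequality you do \emph{not} have. Geometrically this is clear: a more curved convex cap sitting under a unit sphere and touching it is perfectly consistent (a half-sphere of radius $<1$ inside the unit ball is the model picture), so no rigidity can come from that contact. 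The same reversal happens in your $H_r\le 1$ case when you push the sphere up from below.

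The fix is simply to swap the two cases. For $H_r\ge 1$, translate the unit sphere \emph{from below}: set $t_0=\inf\{t\ge 0:\ u_{-t}\le f\}$; if $t_0>0$ the touching is interior (since $u_{-t_0}=-t_0<0=f$ on $\partial D$), now the sphere is ``above'' w.r.t.\ $\eta_0$ and has the smaller $H_r$, and the Tangency Principle applies, yielding $f\ge u_0$. Then your boundary step works as well, because with $f\ge u_0$ the hemisphere is the ``above'' surface near the equator and $H_r(\Sigma_0)\ge 1$ is the correct inequality for $M_2=\Sigma_0$. Symmetrically, for $H_r\le 1$ move from above to get $f\le u_0$. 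After this swap your $H_r\le 1$ argument is exactly Theorem~\ref{thm-rigid-2} in the paper (the cylinder trap sliced by translated hemispheres). For $H_r\ge 1$ the paper instead slices the upper lens $\Omega^+$ by spherical caps through $\partial S^+$ of radius $\ge 1$ (Theorem~\ref{thm-rigid-1}); your corrected ``translate from below'' family is a legitimate alternative foliation that leads to the same conclusion.
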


If the perturbed region is a great spherical cap, then the situation is totally different. Now we can construct nontrivial perturbations increasing (or decreasing) $H_r$. In particular, we need only to consider hypersurfaces of revolution and we can control the principal curvatures easily.

\begin{theorem}\label{thm-nonrigid-0}
There exists a smooth hypersurface $\Sigma$ which coincides with $S^n$ on a small spherical cap such that $\Sigma$ is a convex hypersurface of revolution and its principal curvatures $k_i> 1, \forall i=1,2,\cdots, n$ outside this small spherical cap. Similarly, there exist nontrivial perturbations of the great spherical cap $S^+_{\theta}$ whose principal curvatures $0<k_i\le 1, \forall i=1,2,\cdots, n$.
\end{theorem}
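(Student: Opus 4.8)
The plan is to reduce both constructions to an ODE analysis for hypersurfaces of revolution, where the two principal curvature types (along the profile curve and along the rotational orbits) have explicit closed forms. Parametrize a rotationally symmetric hypersurface by its profile curve $(r(s),h(s))$ in the $(r,h)$-half-plane, arc-length parametrized so that $\dot r^2+\dot h^2=1$. Then the two distinct principal curvatures are $k_1=\dot r\ddot h-\dot h\ddot r$ (the curvature of the profile curve) and $k_2=\dot h/r$ (from the $n-1$ rotational directions, each equal to $\dot h/r$). The unit sphere $S^n$ corresponds to the circular arc $r=\sin t$, $h=\cos t$ (with $t$ the polar angle from the north pole), for which $k_1=k_2=1$ identically, and the boundary circle of the cap $S^+_\theta$ sits at $r=\sin\theta$, $h=\cos\theta$. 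A smooth perturbation fixing $S^-_\theta$ up to $\mathcal{C}^2$ amounts to replacing the arc of the sphere over the region $r<\sin\theta$ by a new profile curve that agrees with the spherical arc to second order at the junction point $r=\sin\theta$ (equivalently: same position, same tangent direction, same $k_1$, which by continuity of the normal also forces the same $k_2$).

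For the first statement (a convex perturbation with all $k_i>1$ coinciding with $S^n$ on a \emph{small} cap $S^-_\theta$), I would work from the north pole outward. The idea is to "bulge" the cap slightly: choose a profile curve that leaves the north pole more sharply curved than the sphere, keeps both $k_1>1$ and $k_2=\dot h/r>1$ throughout, and then is glued to the spherical arc at $r=\sin\theta$ for some $\theta$ close to $\pi/2$ (so that $S^-_\theta$ is a small cap). Concretely one can prescribe $k_1(s)$ as a smooth function slightly larger than the value producing the sphere, integrate the structure ODEs $\dot\phi=k_1$ (where $\phi$ is the angle of the tangent), $\dot r=\cos\phi$, $\dot h=-\sin\phi$, from the pole, and check that with an appropriate choice the orbit curvature $\dot h/r$ also stays above $1$ — this is automatic near the pole where $\dot h/r\to k_1(0)>1$, and can be maintained by keeping $\phi$ from growing too fast. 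The $\mathcal{C}^2$-matching at the junction is a codimension-three condition (position, slope, $k_1$) which one arranges by reverse-engineering: pick the junction angle $\theta$ first, then solve the ODE backward from the matching data; convexity ($k_1,k_2>0$) is preserved by continuity if the perturbation is small. Smoothness of the glued hypersurface at the junction requires care but is standard: one uses that rotationally symmetric functions of $r^2$ which are $\mathcal{C}^2$ in $s$ and match the sphere to infinite order (or simply to the order needed) extend smoothly; in practice one takes the perturbation of $k_1$ to vanish to infinite order at $s$ corresponding to $r=\sin\theta$.

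For the second statement (a nontrivial perturbation of a \emph{great} cap $S^+_\theta$, $\theta>\pi/2$, with $0<k_i\le 1$), the mechanism is different and exploits the geometry past the equator. Over a great cap the profile curve of the sphere turns back inward ($r$ is decreasing, the curve moves toward the south polar axis). Here I would \emph{flatten} the profile near the far end: reduce $k_1$ below $1$ on an interval, which by the ODE $\dot\phi=k_1$ makes the tangent angle increase more slowly, so the profile curve stays "more vertical" longer; one checks this also drives $k_2=\dot h/r$ down because $\dot h=-\sin\phi$ shrinks while $r$ stays bounded below. The key freedom is that a great cap has "room" — the complementary small cap $S^-_\theta$ that must remain fixed is small, so the region we may alter is large, and in particular includes a neighborhood of the point where the profile is most nearly parallel to the axis, precisely where lowering $k_1$ does not risk either curvature going negative. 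Again one prescribes $k_1\le 1$ (strictly less on an open set, equal to $1$ with infinite-order contact at the junction $r=\sin\theta$), integrates, and verifies $0<\dot h/r\le 1$; the inequality $k_2\le 1$ should follow from an integrated comparison ($\phi$ stays below its spherical value, hence $\sin\phi$ is controlled) together with $r$ staying above its spherical value.

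The main obstacle I anticipate is not the curvature inequalities per se — those are monotonicity consequences of the Sturm-type comparison for $\dot\phi=k_1$ — but rather the simultaneous bookkeeping of (a) smooth closure of the revolution hypersurface, which forces delicate behavior of the profile where it meets the rotation axis and at the $\mathcal{C}^2$ (in fact $\mathcal{C}^\infty$-contact) junction, and (b) ensuring the perturbation is genuinely nontrivial, i.e. the new profile is not just the spherical arc reparametrized. The cleanest route is to specify the curvature function $k_1(s)$ directly as sphere-value plus a bump that is nonzero on a compact subinterval and flat (vanishing to infinite order) at the endpoints, then let the fundamental theorem of plane curves and the explicit revolution formulas do the rest; the global consistency check — that the profile actually reaches the north pole (resp. closes up appropriately) and that $r>0$ on the open region — is the one place where a non-vacuous estimate is needed, and I would handle it by keeping the perturbation small in $C^0$ so that the perturbed profile is a graph-perturbation of the spherical one and inherits its qualitative shape.
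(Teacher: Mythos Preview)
Your approach is genuinely different from the paper's. The paper does not prescribe $k_1(s)$ and integrate the Frenet ODE; instead it works through the \emph{evolute} $\gamma$ of the profile curve $\Gamma$. One writes down, by hand, a tent-shaped arc $\gamma$ in the $(z,y)$-plane starting at the origin $p_0=O$ (tangent in the direction $OP_0$), rising to a cusp $p_2$, and falling back to a cusp $p_4$ on the $z$-axis, with the two arc-lengths satisfying $L(\overset{\frown}{p_4p_2})<L(\overset{\frown}{p_2p_0})<1$. The profile $\Gamma$ is then the involute of $\gamma$ issued from $P_0$. The involute formula makes the curvature of $\Gamma$ equal to $1/(1-t)$ along the right branch and $1/(R_2+s)$ along the left, so $\kappa_0>1$ is automatic; and the rotational curvature $\kappa_1=\cdots=\kappa_{n-1}$ is handled by the one-line geometric estimate $|\overline{Pq}|<|\overline{Pp}|+L(\overset{\frown}{pp_0})=1$, where $q$ is the foot of the normal on the axis. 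No ODE comparison is needed.

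Your outline has a real gap at exactly the place the evolute construction is designed to handle: the \emph{closing condition} at the pole. If you set $k_1=1+\text{(bump)}$ and integrate from the junction, then once the bump ends the curve is again an arc of a unit circle, but its centre has moved along the evolute and will generically \emph{not} lie on the axis of revolution; equivalently, the point where $\dot h=0$ will not occur at $r=0$, and the revolved hypersurface will be singular at the pole. ``Keeping the perturbation small in $C^0$'' does not fix this: smallness gives you a nearby curve, not one satisfying a codimension-one constraint. You would need an honest shooting/implicit-function argument with an extra parameter in the bump to force the final centre onto the axis, and you have not supplied one. The paper's trick is that specifying $\gamma$ with $p_0=O$ and $p_4$ on the axis imposes precisely this constraint from the outset, while the arc-length inequalities on $\gamma$ simultaneously guarantee $\kappa_0>1$ and $\kappa_1>1$ without any analytic comparison. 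The same mechanism, with the evolute flipped to the other side, gives the $0<k_i<1$ construction.
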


This dramatic contrast seems quite unbelievable to us at the beginning. To verify these conclusions (at first only vague conjectures for us) and to have a better understanding, a heuristic way is to look at the most elementary case, namely a quadrilateral $ACBD$ inscribed in a circle as in the figure below. When we perturb the vertex $D$ (the major arc $\overset{\frown}{ADB}$) and fix $A,C,B$, the discrete \emph{Menger curvature} at $A,D,B$ can increase or decrease simultaneously (while that at $D$ is fixed); this is impossible if we fix $A,D,B$ and perturb only $C$ slightly. (For the definition of the Menger curvature and a \emph{discrete curvature rigidity theorem}, see the appendix at Section~7.)

 \begin{figure}[!h]
   \setlength{\belowcaptionskip}{0.1cm}
   \centering
   \includegraphics[width=0.4\textwidth]{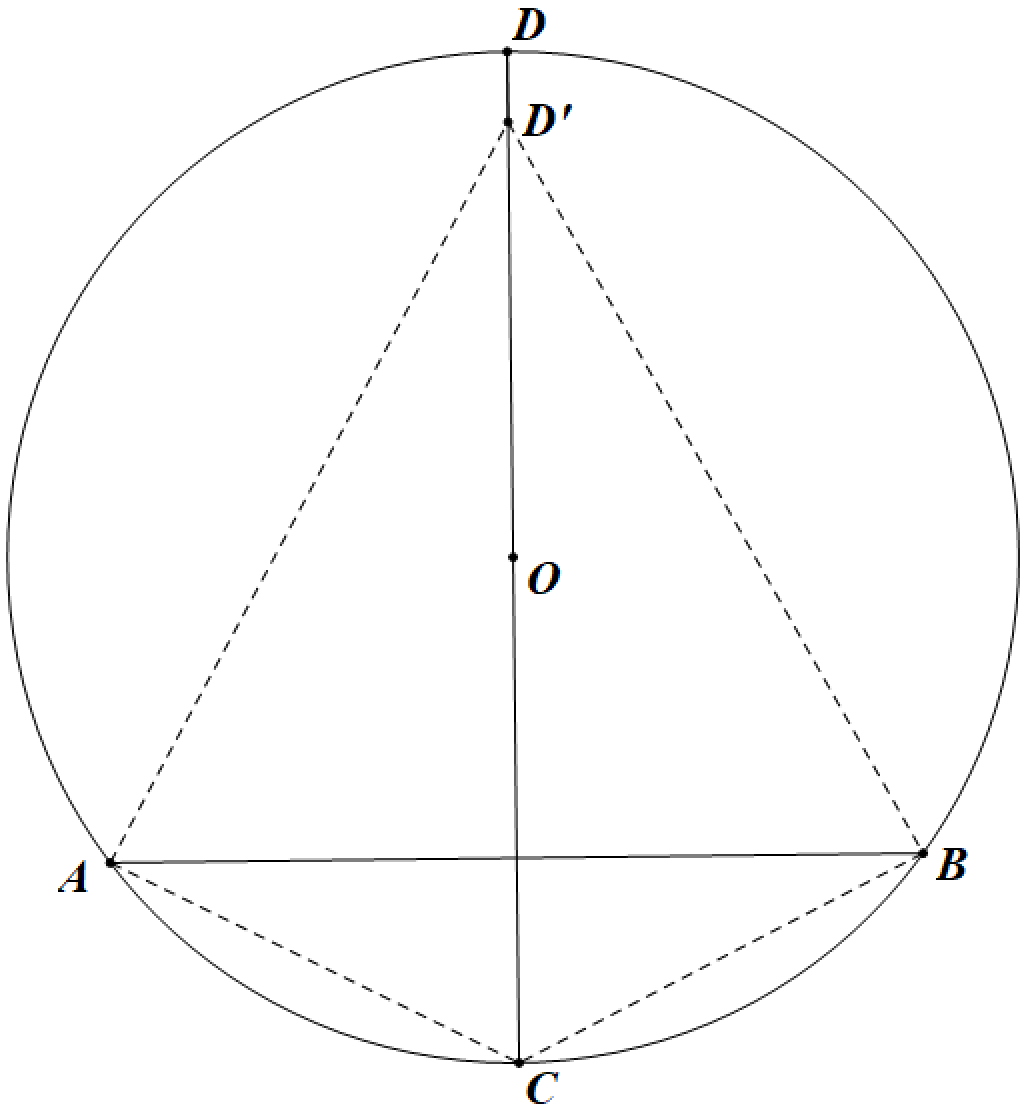}
   \caption*{Perturb the quadrilateral inscribed in a circle}
   \label{fig1}
 \end{figure}

How to understand the difference between the small and great spherical caps? From the proof of the mean curvature rigidity theorem we will see, the crucial difference is that when the spherical cap $S_{\theta}$ is small ($\theta\le \pi/2$), there exists a one-parameter family of spherical caps with the same boundary which has smaller curvature inside $S_{\theta}$ and larger curvature outside $S_{\theta}$. This phenomenon contrasts with the intuition justified by the Tangency Principle. On the other hand, for a great spherical cap $S^+_{\theta}$, the inside caps with the same boundary do have smaller radius (and larger curvature), which allow more freedom to deform.

More generally, the common feature underlying these curvature rigidity results is the existence of such a family of comparison (hyper)-surfaces $\{F_t\}$ (\emph{the slice}) inside a suitable chosen domain (\emph{the trap}). Extending this observation, this recipe is formulated as the so-called \emph{trap-slice lemma} (Theorem~\ref{thm-trap-slice}). Omitting the technical assumptions, it can be summarized in one sentence:\\

\noindent
\textbf{The trap-slice lemma.}
\emph{With respect to the outward normal direction, the boundary region $B_0\subset \partial\Omega$ of domain $\Omega$ admits no inward deformation $\Sigma_*$ with small $H_r(\Sigma_*)\le \alpha$ when $\Omega$ is foliated by $\{F_t\}$ with bigger $H_r(F_t)\ge \alpha$ and $\partial F_t\subset \partial\Omega\setminus B_0$. }\\

Our trap-slice lemma is essentially the same method as Souam used in \cite{Souam}. The advantage of this universal version is that it unifies similar \emph{mean curvature rigidity} results as well as their proofs. The study of such rigidity problem is thus determined by whether we can find suitable trap with desired slice (many times we use CMC-foliation).

We should mention that there has been a lot of research on the metric rigidity problem on the sphere and hemisphere (see \cite{Brendle, Gromov, HangFB-06} and references therein). Many of them originated from \\

\noindent
\textbf{Min-Oo's conjecture \cite{MinOo} in 1995}. \par
\emph{Let $(M,g)$ be an $n$-dimensional compact Riemannian manifold with boundary and the scalar curvature
$R\ge n(n-1)$. The boundary is isometric to the standard sphere $S^{n-1}$ and is totally geodesic. Then, $(M,g)$ is isometric to the hemisphere $S^+$.}\\

This conjecture has been verified in many special cases, yet refuted finally by Brendle, Marques and Neves in 2011 \cite{Brendle}.

Although that, Fengbo Hang and Xiaodong Wang \cite{HangFB-06} proved in 2006 that Min-Oo's conjecture under the additional condition that the metric $g$ is conformal to the standard metric of $S^+$. On the other hand, on a great spherical cap (a geodesic ball with radius $>\pi/2$) they showed in the same paper the existence of a nontrivial conformal metric with $R\ge n(n-1)$ and the metric agrees with the original metric in a neighborhood of $\partial S^+$.

Compared to Hang and Wang's non-rigidity result, we shows that such perturbation (not necessarily conformal) can be constructed directly using the ambient geometry, which seems stronger. On the other hand, they had another result which says that one can intrinsically perturb the sphere metric in an arbitrarily small $\epsilon$ neighborhood of the equator so that the sectional curvature increase, which by our discovery, is impossible for the ambient perturbation when $\epsilon$ is too small (see the discussion in Subsection~6.3 and our paper in preparation \cite{MaWang}).   \\

This paper is organized as follows. In Section~2 we present the Tangency Principle for $r$-mean curvature. It implies the trap-slice lemma in Section~3. As the application, we obtain several rigidity theorems for small spherical caps and hemispheres in Section~4.
In section~5 we give the construction of non-trivial perturbations of any great spherical cap using a specific deformation of the profile curve $\Gamma$, where (instead of any analytical method) we will take use of the famous correspondence between involute and evolute. All our methods are essentially geometric and classical.

\section{A Tangency Principle}

In this section, we present Fontenele and Silva's \textbf{Tangency Principle} following their convention in \cite{Fontenele}. Souam's tangency principle in \cite{Souam} can be regarded as a special case of this general version.

Let $(N^{n+1},h)$ be a complete Riemannian manifold with metric $h$ and the usual exponential mapping $\rm{exp}: T N \to N$. $M^n$ is a hypersurface of $N^{n+1}$. Given $p\in M^n$ and a fixed unit normal
vector $\eta_0\perp T_p M$, we parametrize a neighborhood of
$p\in M^n\subset N^{n+1}$ as the graph of a function $\mu=\mu(x)$:
\begin{equation}\label{eq-exp}
\phi(x) = \rm{exp}_p (x + \mu(x)\eta_0),
\end{equation}
where the tangent vector $x$ varies in a neighborhood $W$ of zero in $T_p M$ and
$\mu: W \to \mathbb{R}$ satisfies $\mu(0) = 0$.

Fix the local orientation by a normal vector field
$\eta: W\to T^\bot M$ of $M^n$ with $\eta(0) = \eta_0$.
Then $A_{\eta(x)}$, the second fundamental form of $M^n$ in the direction $\eta(x)$,
has principal curvatures $\lambda_1(x) \le \lambda_2(x) \le \cdots \lambda_n(x)$ as
continuous functions on $W$. Denote by
$\Lambda(x) = (\lambda_1(x), \lambda_2(x),\cdots, \lambda_n(x))$
the principal curvature vector at $x\in W$.
The $r$-mean curvatures $H_r, 1 \le r \le n$, is given by
\[
H_r(x) = \frac{1}{\begin{pmatrix} n \\ r \end{pmatrix}}\sigma_r(\lambda(x)),
\]
where $\sigma_r(\Lambda(x))$ is the value at $\Lambda(x)$ of the $r$-elementary symmetric function.
In particular, $H_1$ is the usual mean curvature $H$, and $H_n$ is usually called the \emph{Gauss-Kronecker curvature} for a hypersurface in $\mathbb{R}^{n+1}$.

Let $\Gamma_{r}$ denote the connected component in $\mathbb{R}^n$ of the set $\{ \sigma_r > 0 \}$ that
contains the vector $a_0 = (1,1, \cdots, 1)$. Observe that $\Gamma_n$ is precisely the positive
cone ${\cal O}^n$, defined by
\[ {\cal O}^n = \{(z_1, z_2, \cdots, z_n)\in \mathbb{R}^n, | z_i > 0,~~~ 1 \le i \le n\}.\]
and that ${\cal O}^n\subset \Gamma_{r}$ for $1\le r\le n$. More generally, $\Gamma_{r+1}\subset \Gamma_{r}$ for $1 \le r \le n-1$ \cite{Fontenele}.

It is usually called \emph{$r$-convex} if $\Lambda(p)$ (the principal curvature vector of $M$ at $p$) belongs to $\Gamma_r$, which amounts to requiring that at $p\in M$, $H_1,\cdots, H_r$ are all positive.

\begin{definition}\label{def-above}
Let $M^n_1$ and $M^n_2$ be two hypersurfaces of $N^{n+1}$ that are tangent
at $p$, i.e., they share the same tangent plane $T_p M_1 = T_p M_2$.
Fix a unit vector $\eta_0$ that is normal to $M^n_1$ at $p$.

We say that $M_1$ remains above $M_2$ in a neighborhood of $p$
with respect to $\eta_0$ if, when we parametrize $M_1$nd $M_2$ by $\phi_1$ and $\phi_2$ as
above, the corresponding functions $\mu_1$ and $\mu_2$ satisfy $\mu_1\ge\mu_2$ in a
neighborhood of zero.

We denote this as $M_1\ge M_2$ for simplicity when the orientation given by $\eta_0$ is clear.
\end{definition}

\begin{theorem}\label{thm-TP1}
Let $M^n_1$ and $M^n_2$ be hypersurfaces of $N^{n+1}$ that are tangent at $p$ and let $\eta_0$ be a unit normal vector of $M_1$ at $p$. Denote by $H_r^i(x)$ the $r$-mean curvature at $x\in W$ of $M_i, i=1,2,$ respectively. Suppose that with respect to this given $\eta_0$, we have:
\begin{enumerate}
 \item Locally $M_1\ge M_2$, i.e., $M_1$ remains above $M_2$ in a neighborhood of $p$;
 \item $H_r^2(x)\ge H_r^1(x)$ in a neighborhood of zero for some $r, 1 \le r \le n$; if $r\ge 2$, assume also that $M_2$ is $r$-convex at $p$.
\end{enumerate}
Then $M_1$ and $M_2$ coincide in a neighborhood of $p$.
\end{theorem}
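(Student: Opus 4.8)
The plan is to reduce the $r$-mean curvature comparison to a quasilinear second-order elliptic inequality for the difference $w = \mu_1 - \mu_2$ of the two graph functions, and then invoke the classical Hopf strong maximum principle. First I would set up the graph parametrization $\phi_i(x) = \exp_p(x + \mu_i(x)\eta_0)$ as in equation~\eqref{eq-exp}, so that both hypersurfaces are written over the common neighborhood $W \subset T_pM_1$, with $\mu_1(0) = \mu_2(0) = 0$ and $\nabla\mu_1(0) = \nabla\mu_2(0) = 0$ (tangency at $p$). The key algebraic fact is that, in these coordinates, the principal curvatures of each $M_i$ at $x$ are the eigenvalues of a symmetric matrix $\mathcal{A}_i(x)$ whose entries depend smoothly on $x$, $\mu_i(x)$, $\nabla\mu_i(x)$ and $\nabla^2\mu_i(x)$ — indeed linearly (with nonconstant coefficients) in the second derivatives $\nabla^2\mu_i$. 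Consequently $H_r^i(x) = \mathcal{F}_r\bigl(x, \mu_i, \nabla\mu_i, \nabla^2\mu_i\bigr)$ for a fixed smooth function $\mathcal{F}_r$, namely $\binom{n}{r}^{-1}\sigma_r$ of the eigenvalues of $\mathcal{A}_i$.

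Next I would exploit concavity/ellipticity of $\sigma_r^{1/r}$ on the Gårding cone $\Gamma_r$. The hypothesis that $M_2$ is $r$-convex at $p$ means $\Lambda_2(p) \in \Gamma_r$; by continuity $\Lambda_2(x)$ and, after shrinking $W$, also every convex combination $t\Lambda_1(x) + (1-t)\Lambda_2(x)$ stays in $\Gamma_r$ near $p$ — here one uses that $M_1 \ge M_2$ together with $H_r^2 \ge H_r^1$ forces $\Lambda_1(x)$ to be close to $\Lambda_2(x)$ in the relevant regime, or more simply one argues on the region where the contact set accumulates. Then I would write
\[
0 \le H_r^2(x) - H_r^1(x) = \mathcal{F}_r(x,\mu_2,\nabla\mu_2,\nabla^2\mu_2) - \mathcal{F}_r(x,\mu_1,\nabla\mu_1,\nabla^2\mu_1),
\]
and apply the fundamental theorem of calculus along the segment joining $(\mu_1,\nabla\mu_1,\nabla^2\mu_1)$ to $(\mu_2,\nabla\mu_2,\nabla^2\mu_2)$ to obtain
\[
\sum_{i,j} a^{ij}(x)\, \partial_{ij} w + \sum_i b^i(x)\, \partial_i w + c(x)\, w \;\le\; 0,
\]
where $w = \mu_1 - \mu_2 \ge 0$, the coefficient matrix $a^{ij}(x) = \int_0^1 \partial \mathcal{F}_r / \partial(\nabla^2\mu)_{ij}\,dt$ is positive definite precisely because the integrand is evaluated at principal-curvature vectors lying in $\Gamma_r$ (where $\partial\sigma_r/\partial\lambda_k > 0$ for all $k$, hence the derivative with respect to the matrix argument is positive definite), and $b^i, c$ are bounded. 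For $r = 1$ the matrix $a^{ij}$ is elliptic without any convexity hypothesis, which is why the extra assumption is only needed for $r \ge 2$.

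Finally, since $w \ge 0$ satisfies a linear elliptic inequality $Lw \le 0$ with $w(0) = 0$ attaining its minimum at an interior point, the strong maximum principle of E. Hopf (in the form allowing a zeroth-order term, applied to $Lw - c^- w \le 0$ or by the standard trick of subtracting off the sign-indefinite part) yields $w \equiv 0$ in a neighborhood of $p$; that is, $M_1 = M_2$ near $p$. The main obstacle is the second step: one must verify carefully that the linearization coefficient matrix $a^{ij}$ is genuinely positive definite along the entire interpolating path, which requires keeping every intermediate principal-curvature vector inside the open cone $\Gamma_r$ — this is exactly where the $r$-convexity of $M_2$, the ordering $M_1 \ge M_2$, and a shrinking of the neighborhood $W$ all have to be combined, and it is the only place where anything beyond routine computation is needed. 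I would also remark that the smoothness required of $\mathcal{F}_r$ is only $C^1$ in its arguments, which is compatible with the $\mathcal{C}^2$ regularity of the hypersurfaces, so the maximum principle applies in the appropriate (strong, a.e.) sense.
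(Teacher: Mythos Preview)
Your proposal is correct in outline and is precisely the Fontenele--Silva argument that the paper invokes; note that the paper does not give its own proof of this theorem but simply refers to \cite{Fontenele}, remarking only that it rests on the ellipticity of the $H_r$-operator and the maximum principle. Your sketch---graph parametrization, writing $H_r$ as a fully nonlinear function of the 2-jet, linearizing the difference along the segment to obtain $Lw\le 0$ with $w=\mu_1-\mu_2\ge 0$, then Hopf's strong maximum principle---is exactly that argument, and you have correctly isolated the one nontrivial point: positive definiteness of $a^{ij}$ along the interpolating path. The clean way to secure this (and the way Fontenele--Silva do it) is to observe that at $x=0$ the tangency $\mu_1(0)=\mu_2(0)$, $\nabla\mu_1(0)=\nabla\mu_2(0)$ together with $w\ge 0$ forces $\nabla^2\mu_1(0)\ge\nabla^2\mu_2(0)$, hence every interpolated Hessian at $x=0$ dominates $\nabla^2\mu_2(0)$; by Weyl's inequality its ordered eigenvalues dominate $\Lambda_2(0)\in\Gamma_r$, and the monotonicity of the G{\aa}rding cone ($\lambda\in\Gamma_r$, $\mu\ge\lambda$ componentwise $\Rightarrow\mu\in\Gamma_r$) puts the whole segment inside $\Gamma_r$; continuity then gives ellipticity of $a^{ij}$ on a neighborhood. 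Your phrasing ``$M_1\ge M_2$ together with $H_r^2\ge H_r^1$ forces $\Lambda_1$ close to $\Lambda_2$'' is not quite the right mechanism---it is the Hessian ordering at the contact point, not closeness, that does the work---but since you flag this step as the main obstacle and the remedy is standard, the proposal is sound.
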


Such a principle holds true mainly due to the ellipticity of the differential operator involved and the maximal principle. This is also why we require the principal curvature vector $\Lambda$ belongs to $\Gamma_r$ when $r\ge 2$. For the proof and discussions, see \cite{Fontenele}.

For hypersurfaces with boundaries, this tangency principle is still valid as pointed out in \cite{Fontenele}. One needs only to add that $M_1$ and $M_2$, as well as $\partial M_1$ and $\partial M_2$, are tangent at $p\in \partial M_1\cap \partial M_2$, and the statement of other parts is the same.

\section{The trap-slice lemma}

To apply the Tangency Principle to our rigidity problem, a typical recipe is to construct a trapping region $\Omega$ containing the possible perturbation $\Sigma_0$, and $\Omega$ is foliated by a family of smooth hypersurfaces $F_t$ whose $r$-mean curvature $H_r(F_t)$ is controlled and compared with $H_r(\Sigma)$. Usually we find an extremal hypersurface $F_T$ tangent to $\Sigma$ at an interior point $p$ and obtain contradiction with the Tangency Principle. The statement is as below. It seems somewhat too technical. The reward is that we will have a rigidity theorem and a method general enough; and the possible difficulties occurring at $\partial\Sigma_0$ as well as on $\partial F_t$ is treated seriously in our proof (which is easy to overlook at first sight).

\begin{theorem}\label{thm-trap-slice}[The trap-slice lemma]
Let $N^{n+1}$ be a Riemannian manifold.

The \emph{trap} $\Omega\subset N$ is a domain enclosed by two connected hypersurfaces $B_0,B_1$ sharing a boundary $A=B_0\cap B_1$ and $\partial\Omega=B_0\cup B_1$.

The \emph{slice} is a foliation of $\Omega$ by a one-parameter family of hypersurfaces $\{F_{t}\}\subset \Omega$ (with or without boundaries). When $\partial F_t\ne \emptyset$, we assume $\partial F_t \subset B_1$. Each $F_{t}$ divides $\Omega$ into two sub-domains, one having $B_0$ on its boundary, and $\Omega_t$ is the other one away from $B_0$.

Fix an integer $r$, $1\le r\le n$, and a real constant $\alpha\in\mathbb{R}$.
With respect to the outward normal of $\partial\Omega_t\supset F_t$, suppose that the $r$-mean curvature function of $F_{t}$ always satisfies $H_r(F_{t})\ge \alpha$, and each $F_t$ is $r$-convex when $r\ge 2$.

Given the trap and the slice as above, there does NOT exist any hypersurface $\Sigma_*$ with boundary $\partial\Sigma_*$ satisfying all of the following conditions:
\begin{enumerate}
  \item $\Sigma_*$, the interior of the compact hypersurface $\overline\Sigma_*=\Sigma_*\cup \partial\Sigma_*$, is embedded in $\Omega$ with boundary $\partial\Sigma_*\subset B_0\subset \partial\Omega$. In particular, $\Sigma_*$ divides $\Omega$ into two sub-domains; sub-domain $\Omega_*$ is the one of them that having $B_1$ on its boundary. We orient $\Sigma_*$ by the outward normal of $\partial\Omega_*$.
  \item The boundary $\partial\Sigma_*$ has a neighborhood $U_t$ in $\overline\Sigma_*$ not contained in $\Omega_t$ for any $t$.
  \item Given the orientation of $\Sigma_*$, the $r$-mean curvature function $H_r(\Sigma_*)\le \alpha$.
\end{enumerate}
\end{theorem}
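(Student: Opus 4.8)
The plan is to argue by contradiction, using a moving-leaf (sliding) argument that produces an interior tangency between $\Sigma_*$ and one of the slices $F_T$, and then to invoke the Tangency Principle (Theorem~\ref{thm-TP1}). So suppose a hypersurface $\Sigma_*$ with properties (1)--(3) exists. Since the leaves $\{F_t\}$ foliate $\Omega$ and never cross, the open domains $\{\Omega_t\}$ are totally ordered by inclusion; after a reparametrization we may take $t\in(0,1)$ with $\Omega_t$ decreasing, $\Omega_t\to\Omega$ as $t\to0^+$ and $\Omega_t\to\emptyset$ as $t\to1^-$. This yields a continuous ``time function'' $\tau\colon\Omega\to(0,1)$ with $F_t=\tau^{-1}(t)$, $\Omega_t=\{\tau>t\}$, and $d\tau\neq0$ along each leaf; informally $\tau$ increases from $0$ near $B_0$ to $1$ near $B_1$.

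First I would analyse $\tau$ on the compact set $\overline\Sigma_*=\Sigma_*\cup\partial\Sigma_*$. It is continuous on $\Sigma_*\subset\Omega$, and condition (2) --- which keeps $\overline\Sigma_*$ out of $\Omega_t$ near $\partial\Sigma_*$ for every $t$ --- forces $\tau(q)\to0$ as $q\to\partial\Sigma_*$, so $\tau$ extends continuously to $\overline\Sigma_*$ with $\tau\equiv0$ on $\partial\Sigma_*$. By (1), $\Sigma_*$ has points inside $\Omega$, so $T:=\max_{\overline\Sigma_*}\tau$ lies in $(0,1)$ and is attained at an interior point $p\in\Sigma_*$, and $F_T$ is a genuine leaf. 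Because $\overline\Sigma_*\subset\{\tau\le T\}$ and $p\in F_T$, the function $\tau|_{\Sigma_*}$ has an interior maximum at $p$, so $T_p\Sigma_*=\ker d\tau(p)=T_pF_T$: the hypersurfaces are tangent at $p$, and locally $\partial\Omega_T=F_T$, $\partial\Omega_*=\Sigma_*$. Since $\Omega_T=\{\tau>T\}$ is connected, disjoint from $\Sigma_*$, and meets $\Omega_*$ (both contain interior points of $\Omega$ near the $B_1$--part of $\partial\Omega_T$), we get $\Omega_T\subseteq\Omega_*$ near $p$; hence the outward normal of $\Omega_T$ along $F_T$ and the outward normal of $\Omega_*$ along $\Sigma_*$ agree at $p$ --- call it $\eta_0$ --- and with respect to $\eta_0$ the surface $\Sigma_*$ lies above $F_T$ near $p$ (it sits on the side $\{\tau\le T\}$, away from $\Omega_T$). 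With these compatible orientations the slice hypothesis and condition (3) give $H_r(F_T)\ge\alpha\ge H_r(\Sigma_*)$ near $p$, with $F_T$ being $r$-convex when $r\ge2$, so Theorem~\ref{thm-TP1} (applicable at the interior point $p$ of both hypersurfaces) yields $\Sigma_*=F_T$ in a neighbourhood of $p$.

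To close the argument I would propagate this coincidence. Let $Z=\{q\in\Sigma_*:\Sigma_*\text{ coincides with }F_T\text{ near }q\}$, which is nonempty and open. It is also closed in $\Sigma_*$: a limit $q\in\Sigma_*$ of points of $Z$ lies in $F_T$ (it cannot lie on $\partial F_T\subset B_1\subset\partial\Omega$, because $q\in\Sigma_*\subset\Omega$), hence $\tau(q)=T$ is again the maximum and the previous step applies verbatim at $q$. Thus $Z$ is a union of connected components of $\Sigma_*$; pick one, $\Sigma_*^0\subseteq Z\subseteq F_T$. It is open in $F_T$, and its frontier lies in $\partial\Sigma_*\subset B_0$; were $\Sigma_*^0$ a proper subset of $\mathrm{int}\,F_T$, its frontier would contain a point of $\mathrm{int}\,F_T\subset\Omega$, which by the same reasoning would lie in $Z$ and hence in $\Sigma_*^0$ --- impossible. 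So $\Sigma_*^0$ is (a component of) $\mathrm{int}\,F_T$, with $\overline{\Sigma_*^0}=\overline{F_T}$ and $\partial\Sigma_*^0\subset\partial F_T\subset B_1$. But $\mathrm{int}\,F_T\subset\{\tau=T\}\subset\Omega_t$ for every $t<T$, so every neighbourhood of $\partial\Sigma_*^0$ in $\overline{\Sigma_*^0}$ meets $\Omega_t$; restricting to $\overline{\Sigma_*^0}$ a neighbourhood of $\partial\Sigma_*$ in $\overline\Sigma_*$ then contradicts condition (2). Hence no such $\Sigma_*$ exists.

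I expect the main obstacle to be exactly the boundary bookkeeping emphasised in the statement: one must use condition (2) first to ensure that the contact point $p$ is interior to $\Sigma_*$ (so $\tau$ has an interior maximum) and interior to the leaf $F_T$ (so the Tangency Principle applies as stated, away from $\partial\Sigma_*$ and $\partial F_t$), and then use condition (2) a second time to exclude the degenerate ``solution'' $\Sigma_*=F_T$. Care is also needed to check that the two ``outward-normal'' conventions occurring in the hypotheses really coincide at $p$, and to dispose of the (practically irrelevant) case $\partial\Sigma_*=\emptyset$, where a closed $\Sigma_*$ would already be a leaf. Everything else is the standard elliptic maximum-principle comparison, packaged once and for all in Theorem~\ref{thm-TP1}.
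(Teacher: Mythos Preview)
Your argument is correct and follows essentially the same route as the paper's proof: define the leaf-value function on $\Sigma_*$, use condition~(2) to guarantee that its supremum $T$ is attained at an interior point, apply the Tangency Principle (Theorem~\ref{thm-TP1}) at that point, propagate to get $\Sigma_*=F_T$, and contradict condition~(2). The only cosmetic difference is that you extend $\tau$ continuously to $\overline\Sigma_*$ by setting $\tau\equiv 0$ on $\partial\Sigma_*$ (which condition~(2) justifies), whereas the paper avoids this extension by fixing some $t_0$ and showing directly that $\Sigma_*\cap\overline\Omega_{t_0}$ is compact; both devices serve the same purpose of producing an interior maximum.
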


\begin{proof}
We prove by contradiction. Suppose $\Sigma_*\subset \Omega$ is such a hypersurface and assume it to be connected (otherwise we choose any of its connected components).

Without loss of generality, choose the parameter $t$ so that the sub-domains $\Omega_{t_0}\subset \Omega_{t_1}$ when $t_0\ge t_1$. Intuitively, the leaves $F_t$ moves away from $B_0$ when $t$ increases.

For any interior point $p\in \Sigma_*$, there is a unique $t$ such that $F_t$, one leave of the foliation, passes $p$. This defines a continuous function on $\Sigma_*$ and we denote it by $f$. Notice that $f$ is not defined on the boundary $\partial\Sigma_*$, since two leaves $F_{t_1}, F_{t_2}$ might intersect on the same boundary point $p\in \partial\Sigma_*$, which makes $f$ not well-defined.

Suppose $t_0=f(p_0)$ for some $p_0\in \Sigma_*$.
Consider the closed sub-domain $\overline\Omega_{t_0}$ (the closed closure of $\Omega_{t_0}$).
Clearly $\Sigma_*\cap\overline\Omega_{t_0}=f^{-1}([t_0,+\infty))$, and it is a nonempty closed subset of $\Sigma_*$.
By condition~2 we know $\partial\Sigma_*$ has an open neighborhood $U_{t_0}\subset \Sigma_*$
such that $U_{t_0}$ has no intersection with $\Omega_{t_0}$.
This implies that inside $\overline\Sigma_*$, the boundary $\partial\Sigma_*$ is separated from $\Sigma_*\cap\Omega_{t_0}$.
As a consequence, $\Sigma_*\cap\overline\Omega_{t_0}$ is also a closed subset of the compact hypersurface $\overline\Sigma_*$.
Thus $f$ can be viewed as a continuous function defined on this compact subset  $\Sigma_*\cap\overline\Omega_{t_0}$,
which must attain the maximum $T$ at interior points.

At any maximum point $p_0\in f^{-1}(T)$, $\Sigma_*$ must be
tangent to $F_T$ with the same normal direction $\eta_0$ determined by the outward direction of $\partial\Omega_T$ at $p_0$.
By the maximum property it is also obvious that, with respect to $\eta_0$ and its continuous extension to a local normal vector field, $\Sigma_*$ is above $F_T$ in a neighborhood of $p_0$. In other words, $\Sigma_*\ge F_T$.
On the other hand, by assumptions we know $H_r(F_T)\ge \alpha\ge H_r(\Sigma_*)$, and the principal curvature vector $\Lambda(F_T)$ belongs to $\Gamma_r$ when $r\ge 2$.
By the Tangency Principle above, $F_T$ and $\Sigma_*$ coincide in a small neighborhood of $p_0$.
Since $p_0$ is arbitrary in $f^{-1}(T)$ we conclude that $f^{-1}(T)$ is an open and closed subset of $\Sigma_*$.
Thus $\Sigma_*=F_T$ identically. Yet this contradicts with condition 2 that $\partial\Sigma_*$ should have a neighborhood separated from $\Omega_T$. This finishes the proof.
\end{proof}

\begin{corollary}\label{cor-trap-slice}
Assumptions on the trap $\Omega\subset N, \partial\Omega=B_0\cup B_1$ and the slice $\{F_t\}$
are as in the trap-slice lemma (Theorem~\ref{thm-trap-slice}).
Moreover, we suppose that:
\begin{enumerate}
  \item $B_0$ is also one leave of the foliation $\{F_t\}$ (we may suppose $B_0=F_0$ is an open subset of $\partial\Omega$);
  \item For any other $t\ne 0$, either $\partial B_0 \cap \partial F_t=\emptyset$, or $B_0$ intersects with $F_t$ at their boundaries transversally.
\end{enumerate}
Then $B_0$ admits no non-trivial perturbation $\Sigma_0$ (with fixed boundary up to $C^2$ and the same orientation on $\partial\Sigma=\partial B_0$) such that $H_r(\Sigma_0)\le \alpha$, unless two hypersurfaces $\Sigma_0$ and $B_1$ intersect at their interior points.
\end{corollary}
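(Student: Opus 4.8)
The plan is to deduce the corollary directly from the trap-slice lemma (Theorem~\ref{thm-trap-slice}), arguing by contradiction. Suppose $\Sigma_0\neq B_0$ is a non-trivial perturbation of $B_0$, fixing $\partial B_0$ up to $C^2$, carrying the orientation that agrees with that of $B_0=F_0$ along $\partial\Sigma_0=\partial B_0=:A$, and satisfying $H_r(\Sigma_0)\le\alpha$; we must produce an interior intersection point of $\Sigma_0$ and $B_1$. We may take $\Sigma_0$ connected (otherwise replace it by a connected component distinct from $B_0$). A perturbation that is \emph{not} entirely contained in $\overline\Omega$ must, since it is an inward deformation whose boundary lies on $\partial\Omega=B_0\cup B_1$, exit $\overline\Omega$ by crossing $\partial\Omega$ at a point interior to $\Sigma_0$; such a crossing cannot occur along $\mathrm{int}\,B_0=\mathrm{int}\,F_0$ without contradicting the prescribed boundary orientation (the deformation stays on the designated side of its fixed boundary data), so it occurs along $\mathrm{int}\,B_1$, which is the asserted conclusion. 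It therefore suffices to rule out the remaining possibility: a non-trivial $\Sigma_0\subset\overline\Omega$ with $\Sigma_0\cap\mathrm{int}\,B_1=\emptyset$.

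In that case I would set $\Sigma_*:=\Sigma_0$ and check the three hypotheses of Theorem~\ref{thm-trap-slice}. Hypothesis~3 restates the assumption $H_r(\Sigma_0)\le\alpha$, once one observes that the outward normal of the sub-domain $\Omega_*$ (the one with $B_1$ on its boundary) restricts along $\Sigma_0$ to an extension of the normal that $B_0=F_0$ carries as a leaf of the foliation -- this is exactly the ``same orientation on $\partial\Sigma$'' assumption. Hypothesis~1 holds because $\Sigma_0\subset\overline\Omega$ with $\partial\Sigma_0=A\subset B_0\subset\partial\Omega$ and $\Sigma_0$ avoids $\mathrm{int}\,B_1$; a possible tangential contact of $\Sigma_0$ with $\mathrm{int}\,B_0=\mathrm{int}\,F_0$ is benign, as it merely forces the maximal leaf-parameter $T$ in the proof of the lemma to equal $0$, still yielding $\Sigma_0=F_0=B_0$. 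The real work is Hypothesis~2: for every leaf parameter $t$ we must exhibit a neighborhood $U_t$ of $A$ in $\overline\Sigma_0$ with $U_t\cap\Omega_t=\emptyset$. For $t=0$ there is nothing to prove, since $F_0=B_0\subset\partial\Omega$ does not separate $\Omega$, so the sub-domain $\Omega_0$ lying away from $B_0$ is empty. For $t\neq0$, use that $\Sigma_0$ agrees with $B_0=F_0$ to second order along $A$; by hypothesis~(2) of the corollary, either $\partial F_t\cap\partial B_0=\emptyset$ or $F_t$ crosses $B_0$ transversally along $A$, so in either case $F_t$ is uniformly separated from $B_0$ over some neighborhood of $A$ in $\Omega$. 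Hence on a sufficiently small neighborhood $U_t$ of $A$ in $\overline\Sigma_0$ -- where $\Sigma_0$ is $C^1$-close to $B_0=F_0$ -- the surface $\Sigma_0$ stays strictly on the $B_0$-side of $F_t$, i.e.\ outside $\Omega_t$, as required.

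With Hypotheses~1--3 in place, Theorem~\ref{thm-trap-slice} asserts that no such $\Sigma_*=\Sigma_0$ exists, which is the desired contradiction. Together with the first paragraph, this shows that a non-trivial perturbation $\Sigma_0$ with $H_r(\Sigma_0)\le\alpha$ must intersect $B_1$ at interior points.

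I expect the one genuinely delicate step to be Hypothesis~2 near the common boundary $A$. Matching $B_0$ only to order $C^2$ along $A$ does not by itself keep a collar of $\partial\Sigma_0$ inside $\overline\Omega\setminus\Omega_t$; one really needs the transversality in hypothesis~(2), because if some leaf $F_t$ were only tangent (to first order) to $B_0$ along a boundary component, then a thin collar of $\partial\Sigma_0$ in $\Sigma_0$ could slip past $F_t$ into $\Omega_t$, and the whole foliation argument would collapse. A lesser subtlety, which I would treat carefully but regard as routine, is the orientation bookkeeping in the first paragraph: pinning down that ``same orientation on $\partial\Sigma$'' forces the perturbation to stay on the $\Omega$-side of its fixed boundary and cannot cross $\mathrm{int}\,B_0$.
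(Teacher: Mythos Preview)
Your overall architecture is close to the paper's, but there is a genuine gap in the first paragraph, precisely at the point you flag as ``routine'' at the end. The hypothesis ``same orientation on $\partial\Sigma=\partial B_0$'' is only a condition on the unit normal along $A$ (equivalently, that $\Sigma_0$ matches $B_0$ to second order there). It does \emph{not} force $\Sigma_0$ to stay on the $\Omega$-side of $B_0$, nor does it prevent $\Sigma_0$ from crossing $\mathrm{int}\,B_0$. A $C^2$ perturbation of $B_0$ can perfectly well lie entirely on the outer side of $B_0$, or can dip in and out of $\Omega$ through $\mathrm{int}\,B_0$, without ever meeting $\mathrm{int}\,B_1$. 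So the implication ``$\Sigma_0\not\subset\overline\Omega\ \Rightarrow\ \Sigma_0$ meets $\mathrm{int}\,B_1$'' is false as stated, and your case analysis collapses.

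The paper deals with this by a different decomposition. First, if $\Sigma_0$ lies entirely outside the open domain $\Omega$, then along $A$ one has $\Sigma_0\ge B_0$ with $H_r(B_0)=H_r(F_0)\ge\alpha\ge H_r(\Sigma_0)$, and the \emph{boundary} version of the Tangency Principle forces $\Sigma_0=B_0$. Second, if $\Sigma_0\cap\Omega\ne\emptyset$, the paper does \emph{not} set $\Sigma_*=\Sigma_0$; it takes $\Sigma_*$ to be a connected component of $\Sigma_0\cap\Omega$. This is the key move you are missing: it guarantees that the interior of $\Sigma_*$ really is embedded in $\Omega$ (so Hypothesis~1 of the trap-slice lemma holds without your ``benign tangential contact'' patch), and it places $\partial\Sigma_*$ somewhere on $B_0$ --- possibly in $\mathrm{int}\,B_0$, not only on $A$. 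The verification of Hypothesis~2 then splits into the easy sub-case $\partial\Sigma_*\subset\mathrm{int}\,B_0$ (where $\partial\Sigma_*$ is plainly separated from $\overline\Omega_t$) and the sub-case $\partial\Sigma_*\cap A\ne\emptyset$, where the transversality assumption~(2) of the corollary is used, much as you do. Finally, the paper treats the residual possibility that $\Sigma_0\subset\overline\Omega$ touches $B_1$ tangentially at an interior point by a second application of the Tangency Principle (now between $\Sigma_0$ and $B_1$), and rules it out via the orientation mismatch at $A$. Your argument needs both the boundary Tangency Principle step and the ``take a component of $\Sigma_0\cap\Omega$'' step to close.
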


\begin{proof}
If $\Sigma_0$ is contained outside the open domain $\Omega$, then we apply the Tangency Principle (the version for hypersurfaces with boundary) to $\Sigma_0$ and $B_0$ with respect to the chosen outward normal direction, and the conclusion is that $\Sigma_0$ and $B_0$ must coincide on an open and closed subset of $\Sigma_0$. Hence $\Sigma_0=B_0$, which is only a trivial perturbation.

When $\Sigma_0\cup \Omega\ne \emptyset$, let us suppose $\Sigma_0$ and $B_1\subset \partial\Omega$ do not intersect at their interior points. We want to deduce contradiction from these assumptions.

Choose any connected component of $\Sigma_0\cup \Omega$ and denote it $\Sigma_*$. We will apply the trap-slice lemma to this $\Sigma_*$ with boundary $\partial\Sigma_*\in B_0$.

We verify condition~2 of the trap-slice lemma, i.e.:
\emph{fix any $t>0$ and the sub-domain $\Omega_t$,
the boundary $\partial\Sigma_*$ has a neighborhood $U_t$
in $\overline\Sigma_*$ not contained in $\Omega_t$.}

In case that $\partial\Sigma_*$ is contained in the interior of $B_0$,
then $\partial\Sigma_*$ is separated from the closed sub-domain $\overline\Omega_t$.
So condition~2 holds true.

If $\partial\Sigma_*\cap \partial B_0\ne \emptyset$, for any such common boundary point $p$,
due to the transversal intersection assumption, there is an open neighborhood $U(p)$ separating $p$ from $\Omega_t$. Since $\partial\Sigma_*$ is compact, it is clear that we can find such an open neighborhood $U_t$ for $\partial\Sigma_*$ separating it from $\Omega_t$.

In conclusion, condition~2 in the statement of the trap-slice lemma is true under our assumptions for $\Sigma_*$. Then this lemma implies that such a hypersurface $\Sigma_*$ does not exist in the open domain $\Omega$.

There remains the final possibility: $\Sigma_0$ is contained in the closed domain $\overline\Omega$ and $\Sigma_0$ is tangent to the boundary hypersurface $B_1$ in an interior point $p_0\in \Sigma_0$.
Since $B_1$ is now the smooth limit of the foliation $\{F_t\}$,
there must be $H_r(B_1)\ge \alpha\ge H_r(\Sigma_0)$ with respect to the \emph{inward normal direction of $B_1=\partial \Omega$}; when $r\ge 2$, taking limit on $t$ also implies that $\Lambda(B_1)$ still belongs to $\Gamma_r$.
By our convention, $\Sigma_0$ and $B_1$ are endowed with the same normal direction $\eta_0$ at $p_0$, and clearly $\Sigma_0$ remains above $B_1$.
The Tangency Principle now applies at here.
So $\Sigma_0$ coincide with $B_1$ on an open and closed non-empty subset, hence on the whole $\Sigma_0$. But this is impossible, because at the boundary
$B_0\cap B_1$, $\Sigma_0$ is assumed to have the same normal direction as $B_0$, which is the outward normal direction of $\partial\Omega$. (Intuitively, $B_1$ is not a perturbation of $B_0$ in our sense, because they share the same boundary yet with opposite orientations.)

The last contradiction verifies the conclusion and finishes the proof.
\end{proof}

\begin{remark}
In other words, under the assumptions of this corollary, any non-trivial perturbation of $B_0$ must break \emph{the wall of the trap} set by $B_1$.
\end{remark}

\begin{remark}\label{remark-rconvex}
The trap-slice lemma and the corollary above are still true when the assumptions are changed as below:
$\Sigma_*$ and $F_t$ are oriented by the inward normal vectors with respect to $\Omega_*$ and $\Omega_t$, respectively,
and the inequality on $H_r$ is reversed as
\[H_r(F_t)\le \alpha\le H_r(\Sigma_*).\]
But here we need $\Sigma_0$ to satisfy the $r$-convexity condition (not the slice $\{F_t\}$) so that the Tangency Principle still apply to this case.
\end{remark}

As an application and demonstration of the above trap-slice lemma, we obtain the following rigidity result for horospheres and hyperspheres with positive constant $r$-mean curvature $H_r(M)=\alpha>0$ in the hyperbolic space $\mathbb{H}^{n+1}(-1)$.

\begin{theorem}\label{thm-souam+}
Let $M$ denote a horosphere or a hypersphere in a hyperbolic
space $\mathbb{H}^{n+1}(-1), n\ge 2$ and $H_r(M)=\alpha>0 $ be its constant mean curvature.
Let $\Sigma$ be a connected properly embedded $\mathcal{C}^2$-hypersurface in $\mathbb{H}^{n+1}$ which coincides with $M$ outside a compact subset $B_0$ of $\mathbb{H}^{n+1}$. If the mean curvature of $\Sigma$ satisifes $H_r(\Sigma)\le H_r(M)$, then $\Sigma = M$.
\end{theorem}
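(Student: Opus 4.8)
The plan is to reduce this statement to the trap-slice lemma (Theorem~\ref{thm-trap-slice}) together with its Corollary~\ref{cor-trap-slice}, by exhibiting an explicit trap and slice adapted to a horosphere or a hypersphere $M$ with $H_r(M)=\alpha>0$. First I would recall the standard geometry of $\mathbb{H}^{n+1}(-1)$: the horospheres centered at a fixed point at infinity form a foliation of all of $\mathbb{H}^{n+1}$, each with constant principal curvatures all equal to $1$, hence $H_r\equiv 1$; and the family of equidistant hypersurfaces (hyperspheres) at signed distance $\rho$ from a fixed totally geodesic hyperplane has all principal curvatures equal to $\tanh\rho$, so $H_r=(\tanh\rho)^r$, which is a strictly monotone function of $\rho$ taking every value in $(0,1)$ as well as (in the limit) the horosphere value. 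In either case, the hypersurface $M$ sits inside a one-parameter family $\{F_t\}$ foliating a suitable domain, where $H_r(F_t)$ is constant on each leaf and equals $\alpha$ precisely on the leaf corresponding to $M$. Here one must choose the foliation on the correct side: the leaves $F_t$ should be the members of this canonical family lying on the concave side of $M$, i.e. the side into which $\Sigma$ could dip while decreasing $H_r$; orienting each $F_t$ by the outward normal of $\Omega_t$, the geometry gives $H_r(F_t)\ge\alpha$ exactly on that side, matching the hypothesis $H_r(\Sigma)\le\alpha$.

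Next I would set up the trap $\Omega$. Since $\Sigma$ coincides with $M$ outside a compact set $B_0'$, the perturbed part $\Sigma_0=\Sigma\setminus M$ is compact with $\partial\Sigma_0\subset M$. Let $B_0$ be a large but relatively compact region of $M$ containing $\partial\Sigma_0$ in its interior, and let $B_1$ be the piece of the extremal leaf of the foliation on the far concave side, truncated and capped off (or, in the hypersphere case, take the relevant totally geodesic hyperplane or a leaf far enough out) so that $B_0\cup B_1$ bounds a domain $\Omega$ with $A=B_0\cap B_1$; the foliation $\{F_t\}$ between $M=F_0=B_0$ and $B_1$ then foliates $\Omega$, with $\partial F_t\subset B_1$. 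The hypotheses of Corollary~\ref{cor-trap-slice} are then in force: $B_0$ is itself a leaf, and the leaves $F_t$ meet $B_0$ only along their common boundary region inside $A$ (transversally, or not at all), because the canonical horosphere/equidistant families are nested and mutually tangent only in the limit, never crossing. Strictly, one does not even need the full strength of the Corollary: since $\partial\Sigma_0$ lies in the \emph{interior} of $B_0$, condition~2 of the trap-slice lemma holds automatically ($\partial\Sigma_*$ is separated from every $\overline\Omega_t$), so one can invoke Theorem~\ref{thm-trap-slice} directly on each connected component $\Sigma_*$ of $\Sigma_0\cap\Omega$ that meets the interior of $\Omega$.

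With these pieces in place the argument concludes as follows. If $\Sigma_0$ never enters the open trap $\Omega$, it lies on the convex side of $M$; then $\Sigma_0$ and $B_0\subset M$ are tangent (they share the boundary $\partial\Sigma_0$ up to $\mathcal{C}^2$, with matching orientation) and $\Sigma_0$ lies above $M=B_0$ with $H_r(\Sigma_0)\le\alpha=H_r(B_0)$, so the boundary version of the Tangency Principle (Theorem~\ref{thm-TP1}) forces $\Sigma_0=B_0$, i.e. $\Sigma=M$. If instead $\Sigma_0\cap\Omega\ne\emptyset$, pick a connected component $\Sigma_*$; condition~2 is verified as above and conditions~1,3 are immediate from the embeddedness of $\Sigma$ and the hypothesis on $H_r$, so the trap-slice lemma says $\Sigma_*$ cannot exist — contradiction — unless $\Sigma_0$ reaches the wall $B_1$. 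But $B_1$ can be pushed arbitrarily far out on the concave side (the equidistant leaves and horospheres exhaust the relevant half-space, and their $H_r$ stays $\ge\alpha$), while $\Sigma_0$ is compact; so for a sufficiently distant choice of $B_1$ the wall is never touched, and we again get a contradiction. Hence $\Sigma=M$.

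The main obstacle I anticipate is \emph{purely geometric bookkeeping of orientations and of the foliation's reach}: one must check that the canonical family of hyperspheres/horospheres foliates a large enough region that genuinely traps the compact perturbation $\Sigma_0$, that the outward-normal convention makes the inequality $H_r(F_t)\ge\alpha$ come out with the correct sign on the chosen side, and — in the hypersphere case — that $\Sigma_0$ really does lie on the side where the equidistant leaves have $H_r\ge\alpha$ rather than the opposite side (a subtlety absent for horospheres, whose whole complement is foliated by leaves of the same $H_r=1\ge\alpha$). Verifying the transversality/non-crossing condition along $A=\partial B_0$, and handling the truncation that turns the non-compact model leaves into an honest compact $B_1$, are the remaining technical points; none of these should be deep, but they require the kind of care the authors emphasize is "easy to overlook at first sight."
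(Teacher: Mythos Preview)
Your proposal is correct and follows essentially the same approach as the paper: take the concave side of $M$ as the trap, foliate it by the canonical one-parameter family of horospheres tangent at the same ideal point (resp.\ hyperspheres sharing the same ideal boundary), check $H_r(F_t)\ge\alpha$ with the outward orientation, and invoke Corollary~\ref{cor-trap-slice}. The only difference is that the paper sidesteps your truncation-and-capping of $B_1$ entirely by letting $\Omega$ be the whole (non-compact) concave half-space with $B_1$ at infinity, so that the exceptional clause ``unless $\Sigma_0$ meets $B_1$'' in Corollary~\ref{cor-trap-slice} is vacuously satisfied and the bookkeeping you flag as an obstacle simply disappears.
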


\begin{proof}
$M$ divides the ambient hyperbolic space into two domains. One of them is concave; we denote it as $\Omega$ (\emph{the trap}), and fix the outward normal direction on $M$.
$\Omega$ is foliated by a family of totally umbilical hypersurfaces $\{F_t\}$ without boundary (or have their boundaries on $\partial_{\infty}\mathbb{H}^{n+1}$).
When $M$ is a horosphere, we choose $\{F_t\}$ to be the one-parameter family of horospheres tangent to $M$ at the same boundary point at infinity.
When $M$ is a hypersphere, we choose $F_t$ to be the one-parameter family of hyperspheres sharing the same boundary at infinity as $M$.

In any case, the normal vector field $\eta$ is chosen to point to the convex side (the outward direction of $\Omega_t$), for which we have $H_r(F_t)\ge H_r(M)=\alpha$. Also note that $B_0\subset M$ can be added to the family $\{F_t\}$ as one leave of the foliation. The conclusion follows directly from Corollary~\ref{cor-trap-slice} since $B_1$, the other part of the boundary $\partial\Omega$, is now at infinity.
\end{proof}

This result is quite similar to Souam's rigidity theorem \ref{thm-souam}; the only difference is that Souam considered perturbations increasing the mean curvature function, whereas we consider the other direction, namely perturbations with smaller $r$-mean curvature. This case was not considered in Souam's paper \cite{Souam}. Yet the method is essentially the same; what we did is merely rewriting the original proof in Souam's paper \ref{thm-souam} in terms of the trap-slice recipe.

\section{The curvature rigidity of a small spherical cap}

Let $S^n(1)$ denote the unit sphere in $\mathbb{R}^{n+1}$ with $n\ge 1$. $S^+_\theta\subset S^n(1)$ is a spherical cap defined to be
\[
S^+_\theta=\{(x_0,x_1,\cdots,x_n)\in \mathbb{R}^{n+1} |\Sigma_{i=0}^n (x_i)^2=1, \cos\theta\le x_0 \le 1\}
\]
with $\theta$ the central half angle, as in the introduction. In this section, most of the time we assume $\theta\le \pi/2$, i.e., it is a small spherical cap or a hemisphere. In particular, let $S^+$ and $S^-$ denote the upper and the lower hemisphere, respectively.

First let us consider perturbations of $S^+_\theta$ with $H_r\ge 1=H_r(S^n)$. This is simpler than the case for perturbations with smaller $H_r\le 1$.

\begin{theorem}\label{thm-rigid-1}
Let $\Sigma_0$ be a smooth perturbation in $\mathbb{R}^{n+1}$ of the small spherical cap $S^+_{\theta}$ (or hemisphere), and the boundary is fixed up to $C^2$. We assume that $\Sigma=\Sigma_0\cup S^-_{\theta}$ is still a properly embedded closed hypersurface. If the $r$-mean curvature of $\Sigma$ satisfies $H_r (\Sigma)\ge 1$ and $\Sigma_0$ is $r$-convex when $r\ge 2$, then $\Sigma = S^n$.
\end{theorem}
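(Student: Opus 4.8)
The plan is to deduce the theorem from the reversed form of the trap--slice lemma (Corollary~\ref{cor-trap-slice} together with Remark~\ref{remark-rconvex}), which is the version tailored to perturbations that \emph{increase} $H_r$ under an $r$-convexity hypothesis. The first task is therefore to build, for the small cap $S^+_\theta$ with $\theta\le\pi/2$, a suitable trap and slice. Let $\Pi=\{x_0=\cos\theta\}$ be the hyperplane through the boundary sphere $\partial S^+_\theta$ and let $D=\Pi\cap\overline{B^{n+1}}$ be the flat disk it spans. Take $\Omega$ to be the lens-shaped domain with $\partial\Omega=B_0\cup B_1$, where $B_0=S^+_\theta$ and $B_1=D$, and foliate $\Omega$ by the one-parameter family $\{F_R\}_{R\ge 1}$ of spherical caps through $\partial S^+_\theta$ of radius $R$ lying on the same side of $\Pi$ as $S^+_\theta$, normalized so that $F_1=S^+_\theta$ and $F_R\to D$ in $C^2$ as $R\to\infty$. \emph{This is where $\theta\le\pi/2$ is essential}: for a small cap or a hemisphere such caps through a fixed boundary flatten monotonically onto their chord disk as the radius grows, so $\{F_R\}$ indeed foliates the lens, whereas for a great cap the caps with the same orientation run the other way -- the mechanism behind Theorem~\ref{thm-nonrigid-0}. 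Orienting $F_R$ by the inward normal of the subdomain lying between $F_R$ and $D$, all its principal curvatures equal $1/R$, so $H_r(F_R)=R^{-r}\le 1=:\alpha$ and $F_R$ is $r$-convex; moreover $B_0=F_1$ is a leaf and, for $R\ne 1$, $F_R$ meets $B_0$ transversally along $\partial S^+_\theta$. Thus all hypotheses of Corollary~\ref{cor-trap-slice} hold in the form of Remark~\ref{remark-rconvex}, with $\Sigma_0$ and the leaves both oriented inward.

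Since $\Sigma_0$ is $r$-convex when $r\ge 2$ (and no convexity is needed when $r=1$, as the Tangency Principle~\ref{thm-TP1} requires none there) and $H_r(\Sigma_0)=H_r(\Sigma)\ge 1=\alpha$, the reversed Corollary~\ref{cor-trap-slice} leaves only two options: either $\Sigma_0=S^+_\theta$, whence $\Sigma=S^n$ and we are done, or $\Sigma_0$ breaks through the wall $B_1=D$, i.e.\ meets its interior. The subcase in which $\Sigma_0$ lies entirely outside the open lens -- bulging outward past $S^+_\theta$ -- is absorbed here too: comparing $\Sigma_0$ with $B_0=S^+_\theta$ by the boundary version of the Tangency Principle gives $B_0\ge\Sigma_0$ with $H_r(\Sigma_0)\ge 1=H_r(S^+_\theta)$ and $\Sigma_0$ $r$-convex, which forces $\Sigma_0=S^+_\theta$.

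It remains to exclude that $\Sigma_0$ dips below $\Pi$, and this is the only delicate point. When $\theta=\pi/2$ it is immediate: one continues the foliation past $D$ with further spherical caps through $\partial S^+_\theta$ of radius $\ge 1$, now bulging to the other side of $\Pi$, so that the trap becomes the whole unit ball and $B_1=S^-_{\pi/2}\subset\Sigma$; a breakthrough of $B_1$ would then make $\Sigma$ self-intersect, contradicting embeddedness. For $\theta<\pi/2$ one argues at the lowest point $q_0$ of $\Sigma_0$, i.e.\ the minimum of $x_0|_{\Sigma_0}$: since $\Sigma_0$ coincides with $S^+_\theta$ near its boundary, a breakthrough makes $q_0$ an interior point with $x_0(q_0)<\cos\theta$, and the second-derivative test for $x_0|_{\Sigma_0}$ forces all principal curvatures of $\Sigma_0$ at $q_0$ to be $\le 0$ with respect to the downward normal. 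The crux is to show, from the embeddedness and closedness of $\Sigma=\Sigma_0\cup S^-_\theta$, that the region enclosed by $\Sigma$ lies on the downward side of $\Sigma_0$ at $q_0$, so that the downward normal there is precisely the orientation for which $H_r(\Sigma)\ge 1$. Granting this, a contradiction follows at once: when $r\ge 2$ the bound $\le 0$ on the principal curvatures violates $r$-convexity of $\Sigma_0$ (which forces $H_1>0$ at $q_0$), and when $r=1$ it violates $H=H_1\ge 1$ (equivalently, comparing $\Sigma_0$ with the horizontal hyperplane through $q_0$ via the Tangency Principle would force $\Sigma_0$ to be flat near $q_0$). I expect this last point -- pinning down the side of the enclosed region at the bottom of a hypothetical dip from the global topology of $\Sigma$ -- to be the main obstacle; the rest (building the foliation, the identity $H_r(F_R)=R^{-r}$, the transversality, and the orientation bookkeeping for Corollary~\ref{cor-trap-slice} and Remark~\ref{remark-rconvex}) is routine.
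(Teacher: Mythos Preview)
Your approach coincides with the paper's in its core: both take the upper lens $\Omega^+$ bounded by $B_0=S^+_\theta$ and the disk $B_1=D\subset\Pi$, foliate it by the spherical caps through $\partial S^+_\theta$ of radii $R\ge 1$ (so $H_r(F_R)=R^{-r}\le 1$), and apply the reversed Corollary~\ref{cor-trap-slice} via Remark~\ref{remark-rconvex}. This is exactly the paper's Step~2, and your remark on why $\theta\le\pi/2$ is needed for this foliation is on target.

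Where you diverge is in excluding the breakthrough of $B_1=D$. You argue at the minimum $q_0$ of $x_0|_{\Sigma_0}$ and honestly flag the orientation at $q_0$ as an unresolved obstacle. The paper instead runs a \emph{second} trap--slice argument (its Step~1): it takes the lower lens $\Omega^-$ bounded by $D$ and $S^-_\theta$, slices it by hyperplanes parallel to $\Pi$ (all with $H_r=0\le 1$), and concludes $\Sigma_0\cap\Omega^-=\emptyset$; since crossing the interior of $D$ forces entry into $\Omega^-$ (embeddedness of $\Sigma$ bars $\Sigma_0$ from touching $S^-_\theta$), the breakthrough is ruled out uniformly for all $\theta\le\pi/2$. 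Your minimum-point comparison with a horizontal hyperplane is really the same mechanism unpacked, and the orientation you need actually does follow: inside $\Omega^-$ the downward ray from $q_0$ reaches $S^-_\theta$ without meeting $\Sigma$, so the enclosed region lies below $q_0$ and the inward normal there is $-e_0$. The paper's two-trap packaging is cleaner, dispenses with your separate treatment of $\theta=\pi/2$, and absorbs the orientation bookkeeping into the lemma.
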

\begin{proof}
\textbf{Step 1.} We claim that $\Sigma_0$ has no points in the lens domain $\Omega^-$ enclosed by the hyperplane $\Pi$ with equation $x_0=\cos\theta$ and the lower spherical cap $S^-_{\theta}$.
Otherwise, suppose $\Sigma_0\cap \Omega^- =\Sigma_*$ is a hypersurface with boundary $\partial\Sigma_*\subset \Pi$.
Let $\Omega^-$ be the trap; the hyperplanes parallel to $\Pi$ give a natural slice of this trap, which have $H_r=0<1=H_r(\Sigma)$. By the trap-slice lemma, we find this is impossible.

\begin{figure}[!h]
 \setlength{\belowcaptionskip}{0.2cm}
  \centering
   \includegraphics[width=0.5\textwidth]{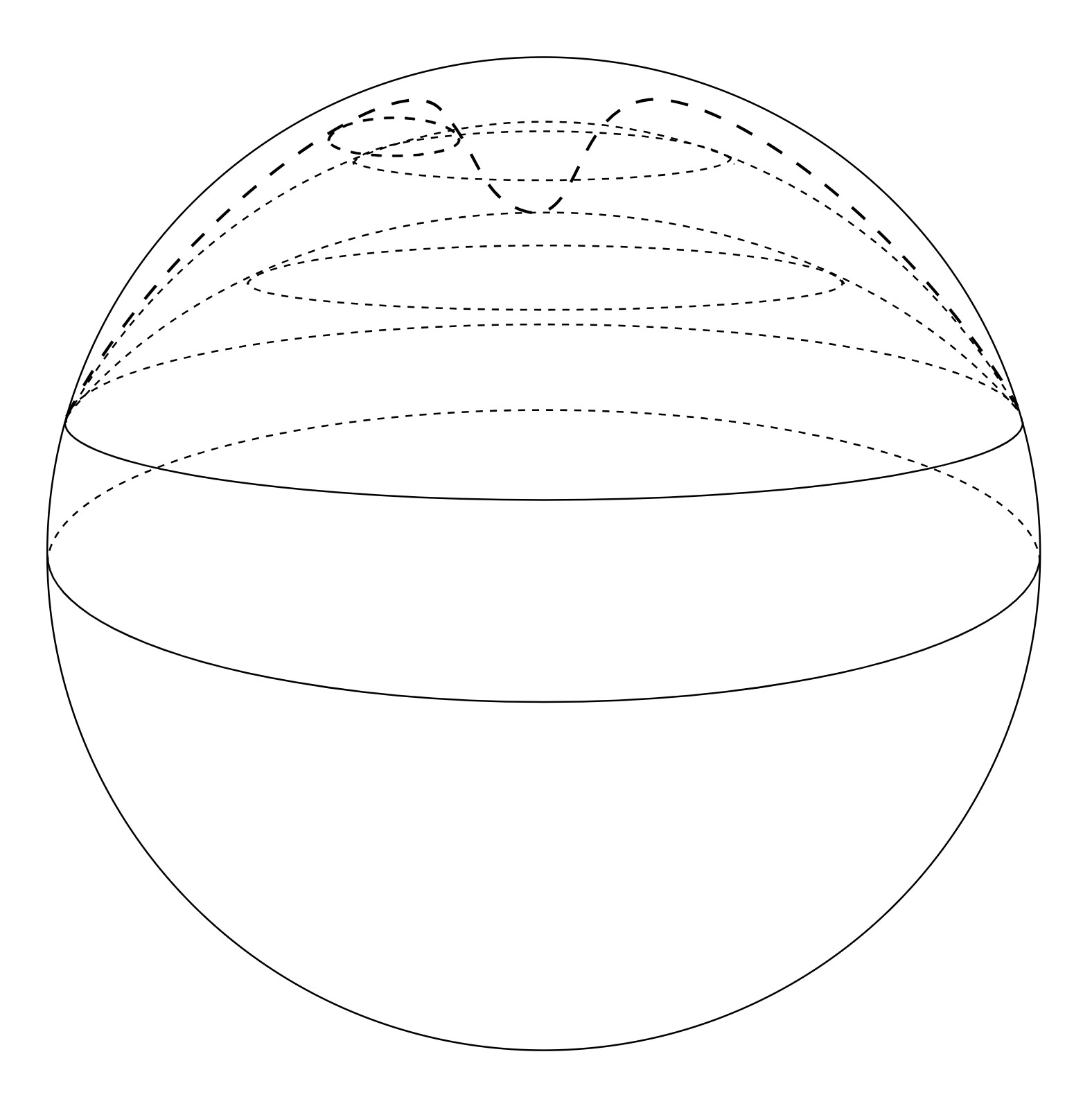}
   \label{fig1}
\end{figure}

\textbf{Step 2.} Then we consider the lens domain $\Omega^+$ enclosed by the hyperplane $\Pi$ and $S^+_{\theta}$. We choose $\Omega^+$ to be the new trap, which is sliced by the sphere caps in the unit ball passing through $\partial S^+_{\theta}$. Notice that $S^+_{\theta}$ is one leave of this family of spherical caps. By Corollary~\ref{cor-trap-slice} and Remark~\ref{remark-rconvex} we can still use the Tangency Principle and conclude with $\Sigma_0=S^+_{\theta}$.
\end{proof}

When we consider the perturbation in the other direction, namely $\Sigma$ has smaller $r$-mean curvature $H_r$,
the convexity condition can be weakened to graph conditions or that $\Sigma$ is properly embedded in a dumbbell-shaped domain.

\begin{theorem}\label{thm-rigid-2}
Let $\theta\le \pi/2$; $S^+_{\theta}$ is a hemisphere or a small spherial cap.
Let $\Sigma_0$ be a smooth perturbation of $S^+_{\theta}$ in $\mathbb{R}^{n+1}$
with the same boundary of $\partial S^+_{\theta}$ up to $C^2$.
In other words, $\Sigma=\Sigma_0\cup S^-_{\theta}$ is a closed hypersurface
which coincides with $S^n$ on a hemisphere (or a bigger cap).
Assume that $\Sigma_0$ is also a graph over the coordinate plane $x_0=0$ and $H_r(\Sigma)\le 1$.
Then $\Sigma_0=S^+_{\theta}, \Sigma=S^n$.
\end{theorem}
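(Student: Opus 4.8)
The plan is to reuse the two-step trap-slice strategy of Theorem~\ref{thm-rigid-1}, but with a trap adapted to the graph hypothesis rather than to convexity, and with the $H_r$ inequality reversed. Write $\Pi$ for the hyperplane $x_0=\cos\theta$ and let $\Omega^-$ (the lens between $\Pi$ and $S^-_\theta$) and $\Omega^+$ (the lens between $\Pi$ and $S^+_\theta$) be as before. The asymmetry with Theorem~\ref{thm-rigid-1} is that now, by Remark~\ref{remark-rconvex}, the $r$-convexity must be carried by the perturbation $\Sigma_0$ and not by the slices; since we are only assuming $\Sigma_0$ is a graph, we must arrange every slice to be the \emph{lower} object at the tangency and only invoke the Tangency Principle in the direction where $r$-convexity is supplied for free (e.g.\ $r=1$, or by the graph structure forcing the contact hypersurface to be $r$-convex). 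I would state the hypotheses so that whenever the Tangency Principle is applied, the hypersurface playing the role of $M_2$ (the lower one) is one we control.

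First I would show $\Sigma_0$ cannot dip below $\Pi$, i.e.\ $\Sigma_0\cap\Omega^-=\emptyset$. Suppose not; let $\Sigma_*$ be a component of $\Sigma_0\cap\Omega^-$, with $\partial\Sigma_*\subset\Pi$. Slice $\Omega^-$ by hyperplanes parallel to $\Pi$; these have $H_r\equiv 0$. With the orientation conventions of Remark~\ref{remark-rconvex} we get $H_r(F_t)=0\le 1\cdot\alpha$ — wait: here we want the reversed inequality $H_r(F_t)\le\alpha\le H_r(\Sigma_*)$, and since $H_r(\Sigma)\le 1$ we instead take $\alpha=0$ and must check $H_r(\Sigma_*)\ge 0$ on the part lying in $\Omega^-$. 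This is exactly where the graph condition enters: a hypersurface that is a graph over $x_0=0$ and that protrudes below $\Pi$ into the lens must, at its lowest point relative to the foliation, have nonnegative $H_r$ (its principal curvatures there are nonnegative with respect to the outward normal), so the reversed trap-slice lemma applies and gives a contradiction. I will need to verify condition~2 (the neighborhood of $\partial\Sigma_*$ escapes every $\Omega_t$), which holds because $\partial\Sigma_*\subset\Pi=F_0$ and, by the transversality of $\Sigma_0$ with $\Pi$ coming from the $C^2$-matching along $\partial S^+_\theta$, a collar of $\partial\Sigma_*$ stays on the $B_0$-side.

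With $\Sigma_0$ trapped in $\overline{\Omega^+}$, the second step is the real one. Take $\Omega^+$ as the new trap and slice it by the family of spherical caps inside the unit ball through $\partial S^+_\theta$ — the same foliation as in Theorem~\ref{thm-rigid-1}, with $S^+_\theta$ itself the leaf $F_0=B_0$. These caps have constant $H_r$; as the radius shrinks the curvature increases, so with the \emph{inward} orientation demanded by Remark~\ref{remark-rconvex} we arrange $H_r(F_t)\le 1\le H_r(\Sigma_0)$ — except that our hypothesis is $H_r(\Sigma)\le 1$, so in fact I want the leaves to have $H_r(F_t)\le 1$ and the comparison $H_r(F_t)\le 1=\alpha$, $H_r(\Sigma_0)\le\alpha$, which is the \emph{original} (non-reversed) direction of the trap-slice lemma; then it is the leaves $F_t$ that need $r$-convexity, and spherical caps are $r$-convex for all $r$, so this is fine. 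Applying Corollary~\ref{cor-trap-slice} with the transversality hypothesis~(2) checked exactly as in Theorem~\ref{thm-rigid-1}, the only escape is $\Sigma_0$ touching the wall $B_1$ (a piece of $\Pi$) at an interior tangency; but $\Pi$ has $H_r=0\le 1$ and, crucially, $\Sigma_0$ sits above $\Pi$ with the matching normal, so the boundary-version Tangency Principle forces $\Sigma_0=\Pi$, contradicting that $\Sigma_0$ shares the outward-oriented boundary of $S^+_\theta$. Hence $\Sigma_0=S^+_\theta$ and $\Sigma=S^n$.

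The main obstacle I anticipate is the bookkeeping of orientations and of which object carries the $r$-convexity: because the inequality on $H_r$ is reversed relative to Theorem~\ref{thm-rigid-1}, Remark~\ref{remark-rconvex} shifts the $r$-convexity burden onto $\Sigma_0$, and the graph hypothesis has to be converted into precisely the sign information needed at each tangency (in Step~1 that $\Sigma_*$ is locally $r$-convex where it dips below $\Pi$; in Step~2 that no convexity of $\Sigma_0$ is secretly required because there the non-reversed version with $r$-convex leaves is the one in play). Getting these two applications to point in compatible directions, and confirming that "graph over $x_0=0$" genuinely supplies the needed one-sided curvature bound at the extremal contact point, is the delicate part; the topological and transversality checks are routine repetitions of the previous proof.
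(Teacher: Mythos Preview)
Your two-step plan does not go through, and the trouble is not just orientation bookkeeping: the lens trap $\Omega^+$ with the spherical-cap foliation simply points the wrong way for the hypothesis $H_r(\Sigma)\le 1$. The caps inside the unit ball through $\partial S^+_\theta$ are pieces of spheres of radius $\ge 1$, so $H_r(F_t)\le 1$; the original trap--slice lemma requires $H_r(F_t)\ge\alpha$, so with $\alpha=1$ you would need $H_r(F_t)\ge 1$, which fails, while the reversed version (Remark~\ref{remark-rconvex}) would demand $H_r(\Sigma_0)\ge 1$, which is exactly what you do not have. Geometrically: if $\Sigma_0$ dips into $\Omega^+$ and touches a leaf $F_T$ from above, then (with the inward normal) $F_T$ is the ``upper'' surface in the Tangency Principle and you would need $H_r(\Sigma_0)\ge H_r(F_T)$, which you cannot extract from $H_r(\Sigma_0)\le 1$ and $H_r(F_T)\le 1$. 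Your Step~1 has the analogous defect (you need $H_r(\Sigma_*)\ge 0$ everywhere, not just at an extremal point), and even if it worked it only rules out $\Sigma_0$ entering $\Omega^-$; it does not show $\Sigma_0\subset\overline{\Omega^+}$, since $\Sigma_0$ could just as well bulge \emph{above} $S^+_\theta$.

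The paper's proof uses a different, one-step trap that is tailored to the hypothesis $H_r\le 1$: reduce to the hemisphere case $\theta=\pi/2$, take $B_0=S^+$, let the trap be the half-infinite solid cylinder $\{\sum_{i\ge 1}x_i^2\le 1\}$ above $S^+$ with $B_1$ the cylindrical wall, and slice by the vertical translates $F_t=S^++(t,0,\dots,0)$. Each leaf has $H_r(F_t)=1$ exactly, so the original trap--slice inequality $H_r(F_t)\ge 1\ge H_r(\Sigma_0)$ holds with equality on the slice side and the $r$-convexity sits on the (spherical) leaves where it is free. The role of the graph hypothesis is then purely topological: it forces $\Sigma_0$ to stay strictly inside the solid cylinder, so $\Sigma_0$ cannot meet $B_1$ at an interior point, and Corollary~\ref{cor-trap-slice} gives $\Sigma_0=S^+$ directly. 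The small-cap case $\theta<\pi/2$ follows by extending $\Sigma_0$ with the unperturbed annulus $S^+\setminus S^+_\theta$. The moral is that when the inequality on $H_r$ flips, you should change the trap and the slice, not try to run the same foliation in the opposite direction.
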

\begin{proof}
We will prove for the upper hemisphere $S^+$,
which implies rigidity results for any small spherical cap (a subset of the hemisphere) directly.

 \begin{figure}[!h]
   \setlength{\belowcaptionskip}{0.2cm}
   \centering
   \includegraphics[width=0.7\textwidth]{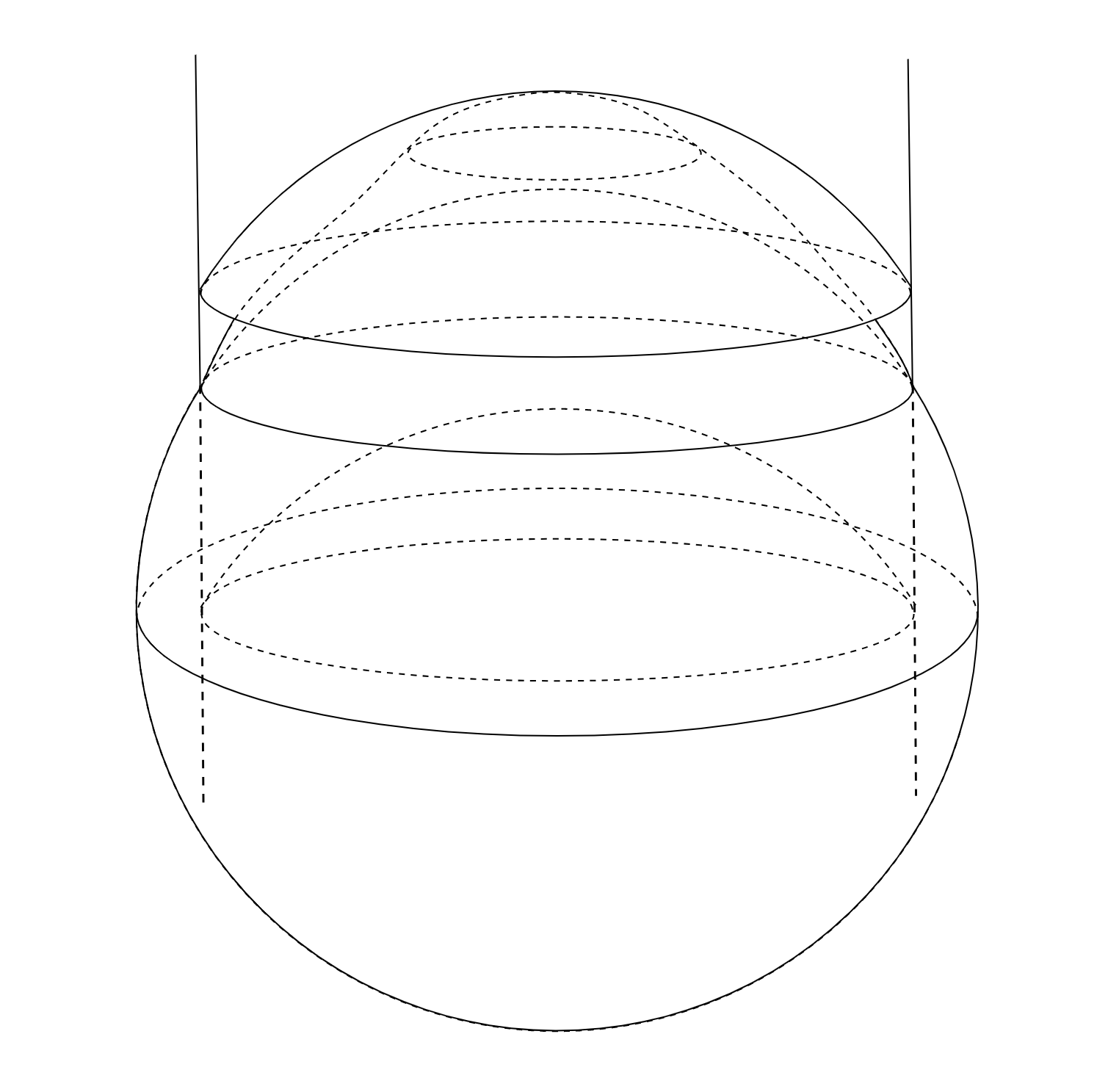}
   \label{fig1}
 \end{figure}

Consider the domain enclosed by $S^+$ and $C$, the round cylinder of radius 1:
\[
\Omega=\{(x_0,x_1,\cdots,x_n)\in \mathbb{R}^{n+1} |\Sigma_{i=1}^n (x_i)^2\le 1,  \sqrt{\Sigma_{i=1}^n (x_i)^2}< x_0 <+\infty~.\}
\]
This trap is sliced by a parallel family $\{F_t\}$ of hemispheres
which are just translations of $S^+$ upwards by $(t,0,\cdots,0)$ with $t>0$
and the same $H_r(F_t)=1$.
Compared to the statement of Theorem~\ref{thm-trap-slice} (the trap-slice lemma)
and Corollary~\ref{cor-trap-slice}, here we need only to take $B_0=S^+$,
and $B_1$ be the upper half cylinder. Then the conclusion follows immediately.
\end{proof}

We point out a general rigidity result for any CMC-graph in the Euclidean space. The proof is the same as above.

\begin{theorem}\label{thm-rigid-3}[The H-rigidity of a CMC graph]
For any compact hypersurface $B_0\subset \mathbb{R}^{n+1}$ which is a graph with constant $r$-mean curvature $H_r=\alpha$ and $\partial B_0=A$, it has no non-trivial smooth perturbation $\Sigma_0$ with the same boundary up to $C^2$ and $H_r(\Sigma_0)\le \alpha$ (or $H_r(\Sigma_0)\ge \alpha$).
\end{theorem}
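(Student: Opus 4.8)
The plan is to reduce Theorem~\ref{thm-rigid-3} to a direct application of Corollary~\ref{cor-trap-slice}, exactly mimicking the argument given for Theorem~\ref{thm-rigid-2} but replacing the hemisphere-and-cylinder configuration with the graph $B_0$ and the vertical cylinder over its boundary. First I would set up coordinates so that $B_0$ is a graph over a bounded domain $D$ in the coordinate hyperplane $\{x_0=0\}$, say $B_0=\{(u(y),y):y\in\overline D\}$ with $\partial B_0=A$ lying over $\partial D$; orient $B_0$ by the upward normal (the one with positive $x_0$-component). The trap $\Omega$ will be the region lying above $B_0$ and inside the vertical cylinder $C=\partial D\times\mathbb{R}$, i.e.\ $\Omega=\{(x_0,y): y\in D,\ x_0>u(y)\}$, so that $\partial\Omega=B_0\cup B_1$ with $B_1$ the (upper) half of the cylinder $C$ and $A=B_0\cap B_1$.

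Next I would produce the slice: translate $B_0$ vertically, setting $F_t=B_0+(t,0,\dots,0)=\{(u(y)+t,y):y\in\overline D\}$ for $t\ge 0$. Each $F_t$ is an isometric copy of $B_0$, hence has the same constant $r$-mean curvature $H_r(F_t)=\alpha$ with respect to the upward normal, and has its boundary $\partial F_t=A+(t,0,\dots,0)\subset B_1$. These leaves foliate $\Omega$ because every point of $\Omega$ lies over a unique $y\in D$ and on exactly one translate. Thus the hypotheses of the trap-slice lemma hold with the constant $\alpha$, and the extra hypotheses of Corollary~\ref{cor-trap-slice} are immediate: $B_0=F_0$ is a leaf, and for $t\ne 0$ the boundary $\partial F_t=A+(t,0,\dots,0)$ is disjoint from $\partial B_0=A$ since they lie at different heights, so condition~2 of the corollary is vacuously satisfied. (If one prefers, one can also allow $t<0$ and use $B_1'$ the lower half-cylinder, but for the argument only one side is needed.) Corollary~\ref{cor-trap-slice} then tells us that $B_0$ admits no non-trivial perturbation $\Sigma_0$ with the same $C^2$ boundary and $H_r(\Sigma_0)\le\alpha$ \emph{unless $\Sigma_0$ meets $B_1$ at interior points}. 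But $B_1$ is a piece of the vertical cylinder $C$, which is \emph{disjoint} from the fixed compact perturbation in the relevant sense: any such $\Sigma_0$ sharing the boundary $A$ and close to $B_0$ stays within a bounded vertical slab over $D$, and interior tangency with the cylinder wall is ruled out by the same Tangency-Principle argument used at the end of the proof of Corollary~\ref{cor-trap-slice} — the cylinder is itself a limit of the leaves only if we arrange the foliation to reach it, which we do by pushing $t\to\infty$; since $\Sigma_0$ is compact it cannot reach that limit, so no interior tangency with $B_1$ occurs. Hence $\Sigma_0=B_0$. The case $H_r(\Sigma_0)\ge\alpha$ follows identically by reversing the orientation and invoking Remark~\ref{remark-rconvex}, noting that for a graph the $r$-convexity needed there is automatic once one checks the sign conventions, or more simply by running the whole argument with the downward normal and the slice translated downward.

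The main obstacle, as in the proof of Theorem~\ref{thm-rigid-2}, is not the curvature comparison (which is trivial here since all leaves are congruent to $B_0$) but the boundary bookkeeping: one must verify carefully that the perturbation $\Sigma_0$, being merely $C^2$-tangent to $B_0$ along $A$, really is trapped in $\overline\Omega$ or at worst touches only $B_1$, and that condition~2 of the trap-slice lemma — the existence of a neighborhood of $\partial\Sigma_*$ in $\overline\Sigma_*$ avoiding each $\Omega_t$ — genuinely holds. This is exactly the point the authors flag as ``easy to overlook'': near $A$ the perturbation lies over $\partial D$, and because the leaves $F_t$ for $t>0$ have moved strictly upward their sub-domains $\Omega_t$ are separated from $A$, giving the required neighborhood. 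Once this is in hand the theorem is immediate; I would therefore keep the write-up short, simply saying ``the proof is identical to that of Theorem~\ref{thm-rigid-2} with $S^+$ replaced by $B_0$ and the cylinder $C$ replaced by the vertical cylinder over $\partial B_0$,'' and spell out only the construction of the trap and slice.
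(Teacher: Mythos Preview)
Your proposal is correct and follows exactly the approach the paper intends: the paper's entire proof of Theorem~\ref{thm-rigid-3} is the sentence ``The proof is the same as above,'' and your trap (region above $B_0$ inside the vertical cylinder over $\partial D$) together with the slice $F_t=B_0+(t,0,\dots,0)$ is precisely the analogue of the hemisphere/cylinder configuration in Theorem~\ref{thm-rigid-2}.

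One small correction: your justification for why $\Sigma_0$ cannot meet the cylinder wall $B_1$ is muddled --- the cylinder is \emph{not} a limit of the vertical translates $F_t$ as $t\to\infty$, and compactness alone does not prevent $\Sigma_0$ from bulging sideways to touch $B_1$. The real reason, implicit in Theorem~\ref{thm-rigid-2} and hence here, is that $\Sigma_0$ is itself assumed to be a graph over $D$ (the statement of Theorem~\ref{thm-rigid-3} is silent on this, but the phrase ``the proof is the same as above'' imports the graph hypothesis on $\Sigma_0$ from Theorem~\ref{thm-rigid-2}); a graph over $D$ has all its interior points strictly inside the open solid cylinder, so it never meets $B_1$. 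With that clarification your write-up is fine.
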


\begin{remark}
It is possible to deform a minor arc of a circle or a small spherical cap outwards, to obtain non-trivial dumbbell-shaped perturbation $\Sigma_0$ with $H_r\le 1$, which is not a graph over the coordinate plane. See Section~6 for a description and the forth-coming paper \cite{MaWang}. But any of such perturbations can not be $C^0$-close to the original small cap according to the following version of rigidity theorem.
\end{remark}

\begin{theorem}\label{thm-rigid-4}
Let $\Sigma_0$ be a smooth perturbation of $S^+_{\theta}$ along the boundary up to $C^2$. Let $\Omega$ be the dumbbell region enclosed by $S^-_\theta$ and its mirror image $S^*_{\theta}=f(S^-_{\theta})$ (reflecting with respect to the hyperplane containing $\partial S^+_{\theta}$).
If $\Sigma_0$ is properly embedded and contained in $\Omega$, with mean curvature $H_r(\Sigma)\le 1$, then $\Sigma_0 = S^+_{\theta}$.
\end{theorem}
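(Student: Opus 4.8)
The plan is to exhibit $\Omega$ as a trap with a suitable slice so that Corollary~\ref{cor-trap-slice} (together with Remark~\ref{remark-rconvex}, since we are decreasing $H_r$) applies directly. The boundary of the dumbbell region $\Omega$ is $\partial\Omega=B_0\cup B_1$ where $B_0=S^+_\theta$ is the cap we want to prove is rigid, and $B_1=S^*_\theta$ is its mirror image across the hyperplane $\Pi=\{x_0=\cos\theta\}$ containing $\partial S^+_\theta$. The natural slice $\{F_t\}$ is the one-parameter family of spherical caps of radius $1$ through the fixed circle $A=\partial S^+_\theta$, sweeping from $F_0=B_0=S^+_\theta$ downward (through the flat disc in $\Pi$, then through the lower caps) until it reaches $F_1=B_1=S^*_\theta$. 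Each $F_t$ is a piece of a unit sphere (or, at the midpoint, a flat disc), all sharing the boundary circle $A$; oriented by the outward normal of $\Omega_t$ (the sub-domain on the $B_1$-side), one checks the mean curvature is $H_r(F_t)\le 1$ along the whole family: the caps above $\Pi$ contribute $H_r=1$ with the correct sign, the disc contributes $H_r=0$, and the caps below $\Pi$ are ``inverted'' relative to $\Omega_t$, so their $r$-mean curvature with respect to this orientation is $\le 1$ as well (indeed $\le 0$ once past the disc). So we are in the reversed-inequality setting of Remark~\ref{remark-rconvex} with $\alpha=1$.

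Next I would verify the two extra hypotheses of Corollary~\ref{cor-trap-slice}: that $B_0=F_0$ is a leaf of the foliation (true by construction), and that for $t\ne 0$ the leaf $F_t$ either meets $B_0$ only on $A$ or meets it transversally along $A$. Since every $F_t$ shares exactly the circle $A$ with $B_0$, and two distinct unit caps through the same circle meet that circle at distinct angles (they are not tangent along $A$), transversality along $A$ holds; this is exactly the situation already used in Step~2 of the proof of Theorem~\ref{thm-rigid-1}. For the reversed inequality we also need $\Sigma_0$ (not the slice) to be $r$-convex when $r\ge 2$; for $r=1$ this is vacuous, and for $r\ge 2$ I would note that being a properly embedded hypersurface trapped in the thin dumbbell $\Omega$ with $H_r\le 1$ and fixed umbilic boundary forces $r$-convexity — or, more cleanly, simply add $r$-convexity of $\Sigma_0$ as a standing hypothesis when $r\ge 2$, matching the other theorems in this section.

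With the trap and slice in place, Corollary~\ref{cor-trap-slice} (in its reversed form, Remark~\ref{remark-rconvex}) says that any non-trivial perturbation $\Sigma_0$ with $H_r(\Sigma_0)\le 1$ and the given boundary orientation must intersect $B_1=S^*_\theta$ at interior points. But by hypothesis $\Sigma_0$ is properly embedded \emph{in} the closed region $\overline\Omega$, so the only way it can ``touch'' $B_1$ is tangentially from inside; this is precisely the final case handled in the proof of Corollary~\ref{cor-trap-slice}, where the Tangency Principle applied at such an interior tangency point with $B_1$ forces $\Sigma_0\equiv B_1$, contradicting the boundary orientation (at $A$ the surface $\Sigma_0$ carries the outward normal of $S^+_\theta$, opposite to that of $S^*_\theta$). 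Hence no such intersection or tangency can occur, and the only remaining possibility is $\Sigma_0=B_0=S^+_\theta$. I expect the main obstacle to be the bookkeeping at the ``waist'' circle $A$: one must be careful that the leaf $F_t$ degenerates smoothly through the flat disc, that the orientation of $H_r(F_t)$ is tracked consistently as the caps flip from above $\Pi$ to below it, and that condition~2 of the trap-slice lemma (a neighborhood of $\partial\Sigma_*$ in $\overline\Sigma_*$ escaping every $\Omega_t$) genuinely holds — but all of this is already packaged inside Corollary~\ref{cor-trap-slice}, so the proof itself should be short.
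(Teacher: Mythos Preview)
Your setup has the inequalities pointing the wrong way, and this is not a bookkeeping issue but a genuine gap. You propose a slice with $H_r(F_t)\le 1$ (unit caps, the flat disc, then ``inverted'' caps) and then invoke Remark~\ref{remark-rconvex}. But the reversed version in that remark requires $H_r(F_t)\le \alpha\le H_r(\Sigma_*)$, i.e.\ $H_r(\Sigma_0)\ge 1$, which is the \emph{opposite} of the hypothesis $H_r(\Sigma_0)\le 1$. With both $H_r(F_t)\le 1$ and $H_r(\Sigma_0)\le 1$ no version of the Tangency Principle applies: at the extremal leaf you would have $\Sigma_0$ above $F_T$ \emph{and} $H_r(\Sigma_0)\le H_r(F_T)$ does not hold (you only know both are $\le 1$), so nothing forces coincidence. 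Relatedly, your ``spherical caps of radius $1$ through $A$'' cannot form a one-parameter family at all---only two unit spheres pass through a fixed $(n-1)$-sphere $A$---so the slice you describe is really a family of \emph{varying} radius passing through the flat disc, with $H_r$ dropping to $0$ and below.

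The paper's slice goes the other way: it foliates the dumbbell by the spherical caps through $A=\partial S^+_\theta$ of radius $\le 1$ (equivalently, with centers on the segment between $0$ and $2\cos\theta$ on the $x_0$-axis). These caps sweep from $S^-_\theta$ through $S^+_\theta$ to $S^*_\theta$ and have $H_r(F_t)\ge 1$ everywhere, with equality only on the three unit-radius leaves. This is exactly the inequality $H_r(F_t)\ge \alpha\ge H_r(\Sigma_0)$ needed for the \emph{direct} form of Corollary~\ref{cor-trap-slice}, so no appeal to Remark~\ref{remark-rconvex} (and hence no $r$-convexity assumption on $\Sigma_0$) is needed. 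Note also that $\partial\Omega=S^-_\theta\cup S^*_\theta$, not $S^+_\theta\cup S^*_\theta$; the small cap $S^+_\theta$ sits \emph{inside} the dumbbell as an interior leaf of the foliation, which is why the paper lists $S^-_\theta$, $S^+_\theta$, $S^*_\theta$ as members of the family.
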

 \begin{figure}[!h]
   \setlength{\belowcaptionskip}{0.2cm}
   \centering
   \includegraphics[width=0.5\textwidth]{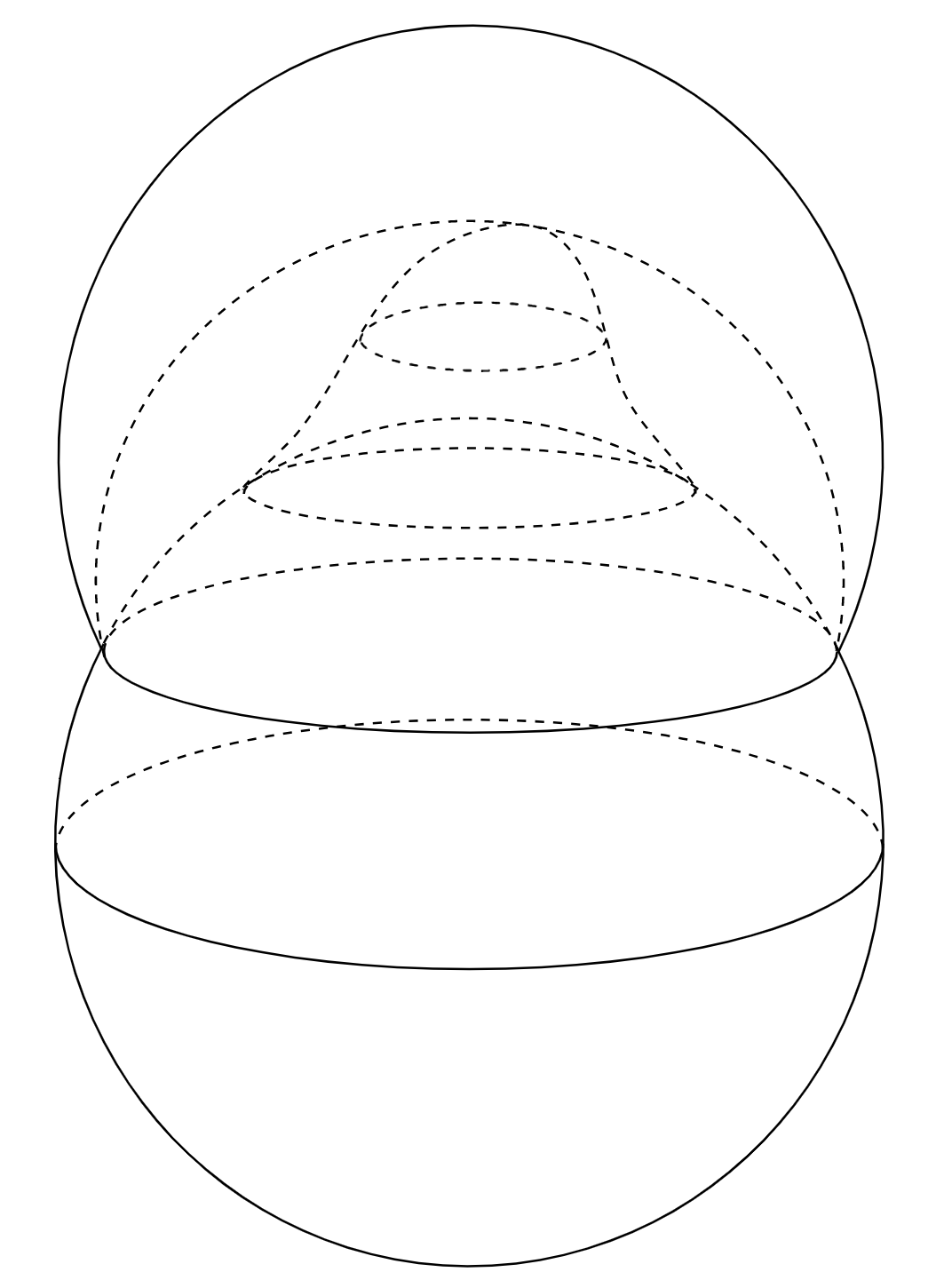}
   \label{fig1}
 \end{figure}

\begin{remark}
Notice that in the statement above, the perturbation $\Sigma_0$ of the small cap is not necessary a graph; moreover, its topological type is allowed to be different from a disk (a ball). On the other hand, the \textbf{properly embeddedness} condition is still necessary to guarantee that $\Sigma=\Sigma_0\cup S^-_{\theta}$ enclose a domain with well-defined inward normal vector field.
\end{remark}

\begin{proof}[Proof to Theorem~\ref{thm-rigid-4}]
The proof is similar to that of Theorem~\ref{thm-rigid-2} with only minor changes.
We need only to notice that the dumbbell-shaped trap $\Omega$ is still foliated by
a family of spherical caps with smaller radii, greater $H^r>1$, and the same boundary $\partial S^+_{\theta}$. In particular, $S^-_{\theta}, S^+_{\theta}$ and $S^*_{\theta}=f(S^-_{\theta})$ are all members in this family. By Corollary~\ref{cor-trap-slice}, the conclusion is clear.
\end{proof}

As a special case of these rigidity theorems above, we summarize and state the mean curvature rigidity of the hemisphere as below.
\begin{corollary}\label{cor-rigid-hemisphere}[The mean curvature rigidity of a hemisphere]
Let $\Sigma_0$ be a smooth perturbation in $\mathbb{R}^{n+1}$ of the hemisphere $S^+$ and the boundary is fixed up to $C^2$. We assume that $\Sigma=\Sigma_0\cup S^-$ is still a \textbf{convex} closed hypersurface (an ovaloid). \\
\indent 1) if the r-mean curvature of $\Sigma$ satisfies $H_r(\Sigma)\ge 1$, then $\Sigma_0 = S_+$;\\
\indent 2) or if the reversed inequality holds, $H_r(\Sigma)\le 1$, then $\Sigma_0 = S_+$.
\end{corollary}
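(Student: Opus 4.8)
The plan is to read this corollary as a packaging of the two rigidity theorems already proved for small caps and hemispheres: Theorem~\ref{thm-rigid-1} handles the direction $H_r(\Sigma)\ge 1$ and Theorem~\ref{thm-rigid-2} handles $H_r(\Sigma)\le 1$. In each case the only thing to check is that the standing hypothesis ``$\Sigma=\Sigma_0\cup S^-$ is a convex ovaloid'' supplies exactly the extra structural assumption that the relevant theorem demands (an $r$-convexity condition in one case, a graph condition in the other). So the proof should be short, consisting of two small lemmas in convex geometry followed by a citation.

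For part 1, Theorem~\ref{thm-rigid-1} applies verbatim once we know that $\Sigma_0$ is $r$-convex (this is only needed when $r\ge 2$); the remaining hypotheses (boundary fixed up to $C^2$, $\Sigma$ a properly embedded closed hypersurface, $H_r(\Sigma)\ge 1$) are immediate, an ovaloid being in particular properly embedded. To get $r$-convexity, note that since $\Sigma$ is convex all its principal curvatures $\lambda_1,\dots,\lambda_n$ are $\ge 0$ with respect to the orientation under which $S^n$ has $H_r=1$; at each point of $\Sigma_0$ we then have $\sigma_r(\Lambda)>0$ with $\Lambda\in\overline{\mathcal O^n}$. Since every monomial occurring in $\sigma_j$ is nonnegative and $\sigma_r>0$ forces at least $r$ of the $\lambda_i$ to be strictly positive, this yields $\sigma_1,\dots,\sigma_r>0$, i.e.\ $\Lambda\in\Gamma_r$. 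Hence $\Sigma_0$ is $r$-convex and Theorem~\ref{thm-rigid-1} gives $\Sigma_0=S^+$.

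For part 2, the missing hypothesis in Theorem~\ref{thm-rigid-2} is that $\Sigma_0$ is a graph over the coordinate plane $x_0=0$, and I would extract this from convexity by a short trapping argument. Let $K$ be the convex body bounded by $\Sigma$, and write $x'=(x_1,\dots,x_n)$. At each equator point $p\in\partial S^+$, $\Sigma$ agrees with $S^n$ up to $C^2$, so the supporting hyperplane of $K$ at $p$ is the sphere's tangent plane $\{\langle x,p\rangle=1\}$, and $K$ lies on the side containing the origin; intersecting these half-spaces over all $p$ with $|p|=1$, $p_0=0$, gives $K\subset\{|x'|\le 1\}$. On the other hand the equatorial circle lies in $\partial K$, so by convexity the whole equatorial disc $\bar D=\{x_0=0,\ |x'|\le 1\}$ lies in $K$; and a one-line computation with the sphere's tangent planes at points of $S^-$ shows $K$ contains no point strictly below $S^-$. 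Therefore every vertical line over an interior point of $\bar D$ meets $K$ in a segment whose lower endpoint lies on $S^-$ and whose upper endpoint lies on $\overline{\Sigma_0}$, with no vertical wall in $\partial K$ (the shadow of $K$ on $\{x_0=0\}$ being exactly $\bar D$, and $\Sigma$ being spherical near $|x'|=1$). This exhibits $\Sigma_0$ as a graph over $\bar D\subset\{x_0=0\}$, and Theorem~\ref{thm-rigid-2} then yields $\Sigma_0=S^+$.

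The only step requiring genuine care is the graph reduction in part 2 — specifically ruling out a ``vertical wall'' portion of $\partial K$ and confirming that $S^-$ really is the lower boundary of $K$ over $\bar D$; both become routine once one has pinned $K$ between $S^-$ and the unit cylinder using the equatorial supporting hyperplanes. Everything else is a direct appeal to the preceding theorems; in particular part 1 uses only that the nonnegativity of the principal curvatures together with $H_r\ge 1$ upgrades to $r$-convexity, not the full force of the ovaloid hypothesis.
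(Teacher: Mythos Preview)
Your proposal is correct and matches the paper's approach: the paper states this corollary without proof, simply as a summary of the preceding rigidity theorems, and you have correctly identified Theorem~\ref{thm-rigid-1} and Theorem~\ref{thm-rigid-2} as the two inputs. You go further than the paper by spelling out why the ovaloid hypothesis supplies the missing structural assumptions---$r$-convexity for part~1 (via nonnegativity of the principal curvatures plus $\sigma_r>0$) and the graph condition for part~2 (via the supporting-hyperplane argument trapping $K$ in the unit cylinder)---both of which the paper leaves implicit.
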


\begin{remark}\label{remark-dumbbell}
When the perturbation $\Sigma$ is an ovaloid and the usual Gauss-Kronecker curvture $H_n\ge 1$,
we can prove $\Sigma=S^n$ without using the Tangency Principle (which means that one can avoid
using PDE or maximum principle).
Such a proof involves two ingredients: the total curvature integral and
the area(volume) comparison between two ovaloids (see discussions in Section~6).
The proof is easy and we omit it at here.
\end{remark}

\section{The non-rigidity results of a great spherical cap}

In contrast to the rigidity of small spherical caps and hemispheres, a great spherical cap has nontrivial perturbations $\Sigma_0$ with $H_r>1$ and $\tilde\Sigma_0$ with $H_r<1$. Here we will construct $\Sigma_0$ with $H_r>1$ first.

\begin{theorem}\label{thm-nonrigid-1}
When $\theta> \pi/2$, there exist a smooth hypersurface $\Sigma_0$ as a perturbation of the great spherical cap $S^+_{\theta}$ with the same boundary up to $C^2$, such that $\Sigma_0$ is a convex hypersurface of revolution and its principal curvatures $k_i> 1, \forall i=1,2,\cdots, n$ away from $\partial S^+_{\theta}$. In particular, this nontrivial perturbation has $H_r(\Sigma_0)> 1$ everywhere (away from the boundary).
\end{theorem}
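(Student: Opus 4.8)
\medskip
\noindent\textbf{Proof proposal.}
The plan is to reduce the statement to a one–dimensional problem about the profile curve, and then to construct that curve by prescribing its \emph{evolute}, using the classical evolute–involute correspondence.

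\emph{Step 1: reduction to the meridian.} Work in the meridian half–plane $\{(z,\rho):\rho\ge 0\}$, $z=x_0$, $\rho=\sqrt{x_1^2+\cdots+x_n^2}$, and look for $\Sigma_0$ as the hypersurface of revolution about the $x_0$–axis generated by a unit–speed curve $\Gamma:s\mapsto(z(s),\rho(s))$ with $\rho>0$. Its principal curvatures (for the inward normal) are $k_1=\kappa$, the plane curvature of $\Gamma$ (multiplicity $1$), and a second one of multiplicity $n-1$ equal in absolute value to $|z'|/\rho$; both equal $+1$ on $S^n$, and since $z'^2+\rho'^2=1$ one gets the clean criterion
\[
k_2>1\iff \rho(s)^2+\rho'(s)^2<1\qquad(\text{wherever }\rho>0),
\]
while $k_1>1\iff\kappa>1$. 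For $\Sigma=\Sigma_0\cup S^-_\theta$ to be a closed hypersurface agreeing with $S^n$ near $\partial S^+_\theta$, the curve $\Gamma$ must (a) coincide, near one endpoint, with the unit–circle meridian $s\mapsto(\cos(\theta-s),\sin(\theta-s))$ of $S^+_\theta$ — on which $\kappa\equiv1$ and $\rho^2+\rho'^2\equiv1$, so the gluing is at least $C^2$ — and (b) close up smoothly on the axis at a new pole $(z_*,0)$, $z_*>0$. Near a smooth pole $\rho^2+\rho'^2\to1$ while $|z'|/\rho\to\kappa_{\mathrm{pole}}$, so the pole is compatible with the two inequalities precisely when $\kappa_{\mathrm{pole}}>1$; there is thus no obstruction there.

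\emph{Step 2: the construction via evolute–involute.} The model target is an arc of a circle of radius $R<1$ through $\partial S^+_\theta$, i.e. a cap of a smaller round sphere: it has $\kappa=1/R>1$ and $\rho^2+\rho'^2=R^2<1$, hence all the desired strict inequalities, its only defect being the curvature jump at $\partial S^+_\theta$. I would therefore interpolate $C^2$ (indeed $C^\infty$, using cut–off functions) from the unit–circle meridian near the boundary to such a small circular arc near the new pole, and the clean way to organise this is to prescribe the \emph{evolute} $\delta$ of $\Gamma$ — the locus of centres of curvature — as an explicit short convex curve in the closed half–plane, starting at the origin, which is the centre of curvature of the unit–circle portion, and of total length $<1$; then $\Gamma$ is recovered as the involute of $\delta$ normalised to have string length $1$ at the origin end. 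By the correspondence this forces $\kappa_\Gamma(s)=1/\ell(s)$ with $\ell(s)\le1$ the string length, so $\kappa_\Gamma>1$ as soon as $\delta$ has left the origin; and $\ell$ stays bounded away from $0$ (because $\mathrm{length}(\delta)<1$), so $\Gamma$ has bounded curvature and closes up at a smooth pole with $\kappa_{\mathrm{pole}}>1$. The tangent line of $\delta$ at $\delta(s)$ is the normal line of $\Gamma$ at $\Gamma(s)$, which is exactly what governs the second principal curvature $k_2$.

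\emph{Step 3: the latitudinal estimate, and the main obstacle.} What remains is $k_2>1$, i.e. $\rho^2+\rho'^2<1$, throughout the transition; this is the heart of the matter. Differentiating, $\tfrac{d}{ds}(\rho^2+\rho'^2)=2\rho'(\rho+\rho'')=2\rho'(\rho-\kappa z')$. Near the boundary endpoint one has, \emph{because} $\theta>\pi/2$, that $\rho'=-\cos\theta>0$, together with $\rho-\kappa z'=\sin\theta\,(1-\kappa)<0$ as soon as $\kappa>1$; hence $\rho^2+\rho'^2$ strictly decreases below $1$ the instant we leave the matching arc — and this is exactly the step that fails for $\theta\le\pi/2$, in accordance with the rigidity Theorems~\ref{thm-rigid-1}--\ref{thm-rigid-2}. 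The real work is to choose $\delta$ so that $\rho^2+\rho'^2$ stays below $1$ \emph{all the way} to the new pole, not merely near the two ends; in evolute terms this amounts to keeping the centre of curvature $\delta(s)$ close enough to the $x_0$–axis (its $\rho$–coordinate below $\rho(s)(1-1/\kappa(s))$), which one can afford since on the transition we are free to take $\kappa$ comfortably above $1$ and $\delta$ short and close to the axis. I expect this uniform control over the entire transition region to be the main obstacle. Finally, the construction yields a closed embedded hypersurface of revolution $\Sigma$ with all principal curvatures positive (and $>1$ off $\partial S^+_\theta$), so it is convex by Hadamard's theorem, which gives all the assertions of the theorem.
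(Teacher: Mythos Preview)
Your plan—reduce to the meridian curve and build it as an involute of a prescribed evolute issuing from the origin—is exactly the paper's approach, and you have correctly isolated where $\theta>\pi/2$ is used. The gap is precisely the point you flag as the ``main obstacle'': the latitudinal estimate $k_2>1$ (equivalently $\rho^2+\rho'^2<1$) along the \emph{entire} transition. Your differentiation of $\rho^2+\rho'^2$ only gives the right sign in a neighbourhood of the boundary and does not propagate; as written, Step~3 is a heuristic, not a proof.

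The paper closes this gap with a one–line geometric inequality that replaces your ODE control entirely, and that inequality also dictates the shape of the evolute (which is \emph{not} a single convex arc). Write the latitudinal radius as $R_1=\overline{Pq}$, where $q$ is the intersection of the normal of $\Gamma$ at $P$ with the $z$–axis; this normal is the tangent line of the evolute at the corresponding centre $p$, so $P,p,q$ are collinear with $p$ between them. If the relevant evolute branch is convex and terminates at a point $p_\ast$ lying on the $z$–axis, then by convexity the segment $\overline{Pq}$ is strictly shorter than the $C^1$ path $\overline{Pp}\cup\overset{\frown}{p\,p_\ast}$, whose length, by the involute identity, equals the curvature radius of $\Gamma$ at the point sitting over $p_\ast$. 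Taking $p_\ast=p_0=O$ gives $R_1<1$ on that whole branch in one stroke.

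This is why the paper's evolute is tent–shaped, $\gamma=\overset{\frown}{p_0p_2p_4}$, with a cusp at the peak $p_2$ and another at $p_4$ on the $z$–axis, subject to the length constraint $L(\overset{\frown}{p_4p_2})<L(\overset{\frown}{p_2p_0})<1$. The two convex branches let one run the inequality above toward $p_0$ and toward $p_4$ respectively, yielding $R_1<1$ on $\overset{\frown}{P_0P_2}$ and $R_1<R_4<1$ on $\overset{\frown}{P_2P_4}$; the cusp at $p_4$ simultaneously makes $P_4$ a vertex of $\Gamma$ (local minimum of $\kappa$, value $>1$), so the profile closes at the pole with the right behaviour. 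Once you replace your analytic Step~3 by this evolute inequality and adopt the tent shape, nothing remains to estimate.
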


\noindent
\textbf{Analysis.}
Let us consider the 1-dim case first. The higher dimensional case will follow easily since the principal curvatures for such a hypersurface of revolution are also determined by its profile curve on the $(z,y)$ plane (rotating around the $z$-axis).

Let $S=\overset{\frown}{EP_0F}$ be the original semi-circle (as the profile curve) with center $O$ and angle $\angle EOP_0=\pi-\theta<\frac{\pi}{2}$. Taking $OF$ to be the positive $z$-axis (the future positive $x_0$- axis). We perturb $S=\overset{\frown}{EP_0F}$ to curve $\Gamma=\overset{\frown}{EP_0P_4}$ so that the following conditions are satisfied: \\

\noindent
\textbf{Ansatz for $\Gamma$:}
\begin{enumerate}
  \item $\Gamma$ is the graph of a concave height function $y=y(z)$ with peak at $P_2$ (maximum of $y$). $\Gamma$ intersect with $OF$ at $P_4$ orthogonally.
  \item The union $\Gamma\cup -\Gamma$ is a convex $C^2$ closed curve on the $(z,y)$ plane, where $-\Gamma$ denotes the reflection of $\Gamma$ across the $z$-axis.
  \item The curvature $\kappa$ of $\Gamma$ is a unimodal function with maximum at $P_2$ and minimum $\kappa=1$ on $\overset{\frown}{EP_0}$. In particular, $\kappa(P_4)>1$ is a local minimum on $\Gamma\cup -\Gamma$.
\end{enumerate}

To find such a $\Gamma$, the real challenge is to control the monotonicity and the minimum of $\kappa$. This is easy to do by examining the loci of the corresponding center of curvature $\gamma$. We summarize the classical properties of the correspondence between $p\in \gamma$ and $P\in \Gamma$ as below \cite{do-Carmo}.

\begin{proposition}\label{prop-evolute}[evolute-involute correspondence]
$\gamma$ is known as the \emph{evolute} of $\Gamma$. They are related with each other and having the following properties:
\begin{itemize}
  \item The line segment $pP$ is tangent to $\gamma$ at $p$ and normal to $\Gamma$ at $P$; its length $|\overline{Pp}|=\frac{1}{\kappa}=R$ is the curvature radius of $\Gamma$ at $P$.
  \item $\Gamma$ is called the \emph{involute} of $\gamma$ with the following property: for two points $P_i, P_j\in \Gamma$ and the corresponding points $p_i, p_j\in \gamma$, there is
      \[
      L(\overline{P_j p_j})=L(\overline{P_i p_i})+ L(\overset{\frown}{p_ip_j}), ~~~\text{(with the orientation kept in mind).}
      \]
      for the lengths of the line segments and the arc-length on $\gamma$. In other words, with respect to an arc-length parameter $t$ of $\gamma=p(t)$ and some suitable constant $a$, $P\in\Gamma$ is given by
    \begin{equation}\label{eq-involute}
      P(t)=p(t)+p'(t)(a-t)
    \end{equation}
  \item At a vertex $P$ of $\Gamma$ where the curvature $\kappa$ attains maximum (or minimum), the evolute $\gamma$ has a cusp $p$.
\end{itemize}
\end{proposition}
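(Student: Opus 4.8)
\emph{Proof proposal.} Although this is classical material (\cite{do-Carmo}), the plan is to give a self-contained argument, since everything drops out of the Frenet calculus for plane curves. I would parametrize $\Gamma$ by arc length $s$, write $T=\Gamma'$ for the unit tangent and $N$ for the unit normal, with Frenet equations $T'=\kappa N$ and $N'=-\kappa T$, and set $R=1/\kappa$; by definition the evolute is $\gamma(s)=\Gamma(s)+R(s)N(s)$, the locus of centres of curvature.

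\emph{First bullet.} Since $\gamma(s)-\Gamma(s)=R(s)N(s)$, the segment from $P=\Gamma(s)$ to $p=\gamma(s)$ points along $N(s)$, hence is normal to $\Gamma$ at $P$, with length $R=1/\kappa$; and differentiating with the Frenet equations gives $\gamma'(s)=T+R'N+RN'=R'(s)N(s)$ because $R\kappa\equiv1$, so wherever $R'\neq0$ the velocity of $\gamma$ is again a multiple of $N(s)$, and the common normal line $\overline{Pp}$ is tangent to $\gamma$ at $p$.

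\emph{Second bullet.} From $|\gamma'|=|R'|$, on any subarc where $\kappa$ (equivalently $R$) is monotone the arclength of $\gamma$ between $p_i$ and $p_j$ equals $\int|R'|\,ds=|R(s_j)-R(s_i)|=|L(\overline{P_jp_j})-L(\overline{P_ip_i})|$, which, with orientation taken into account, is the stated identity $L(\overline{P_jp_j})=L(\overline{P_ip_i})+L(\overset{\frown}{p_ip_j})$. For the closed form, reparametrize $\gamma$ by its own arclength $t$: on an interval where $R'<0$ one gets $dt=-R'\,ds$, hence $R=a-t$ for a suitable constant $a$ and $p'(t)=d\gamma/dt=-N$; therefore $P=\Gamma(s)=\gamma(s)-R(s)N(s)=p(t)+(a-t)p'(t)$, which is exactly (\ref{eq-involute}).

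\emph{Third bullet, and the delicate point.} At a vertex $s_0$ of $\Gamma$ one has $\kappa'(s_0)=0$, so $R'(s_0)=-\kappa'/\kappa^2=0$ and $\gamma'(s_0)=0$: the evolute is singular there. If moreover the vertex is non-degenerate, $\kappa''(s_0)\neq0$, then using $\gamma''=R''N-R'\kappa T$ and one further differentiation, the Taylor expansion at $s_0$ is $\gamma(s)-\gamma(s_0)=\tfrac{1}{2}R''(s_0)(s-s_0)^2N(s_0)-\tfrac{1}{3}R''(s_0)\kappa(s_0)(s-s_0)^3T(s_0)+\cdots$ with both leading coefficients nonzero — the normal form of an ordinary cusp, whose cuspidal tangent is the direction $N(s_0)$, namely the common normal $\overline{Pp}$. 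This is the only place that needs care: "cusp" has to be read as \emph{ordinary cusp}, which is legitimate exactly when the relevant vertices of $\Gamma$ are non-degenerate; this holds generically, and it is arranged by construction for the profile curve $\Gamma$ of Theorem~\ref{thm-nonrigid-1}, whose evolute $\gamma$ is prescribed with honest cusps at the points corresponding to the vertices $P_2$ and $P_4$. Apart from this and the routine sign bookkeeping in the second bullet, there is no real obstacle.
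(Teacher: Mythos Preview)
Your argument is correct. The paper does not actually prove Proposition~\ref{prop-evolute}: it is stated as a summary of classical facts with a reference to \cite{do-Carmo}, and is then used as a black box in the construction of Section~5. So there is nothing to compare against; what you have written is a clean self-contained verification via the Frenet equations, which is exactly the standard route. Your caveat in the third bullet---that ``cusp'' should be read as an ordinary cusp and that this requires the vertex to be non-degenerate ($\kappa''\neq 0$)---is a genuine precision the paper leaves implicit, and your observation that the profile curves built in Theorem~\ref{thm-nonrigid-1} are arranged to satisfy this is well taken.
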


 \begin{figure}[!h]
   \setlength{\belowcaptionskip}{0.2cm}
   \centering
   \includegraphics[width=0.7\textwidth]{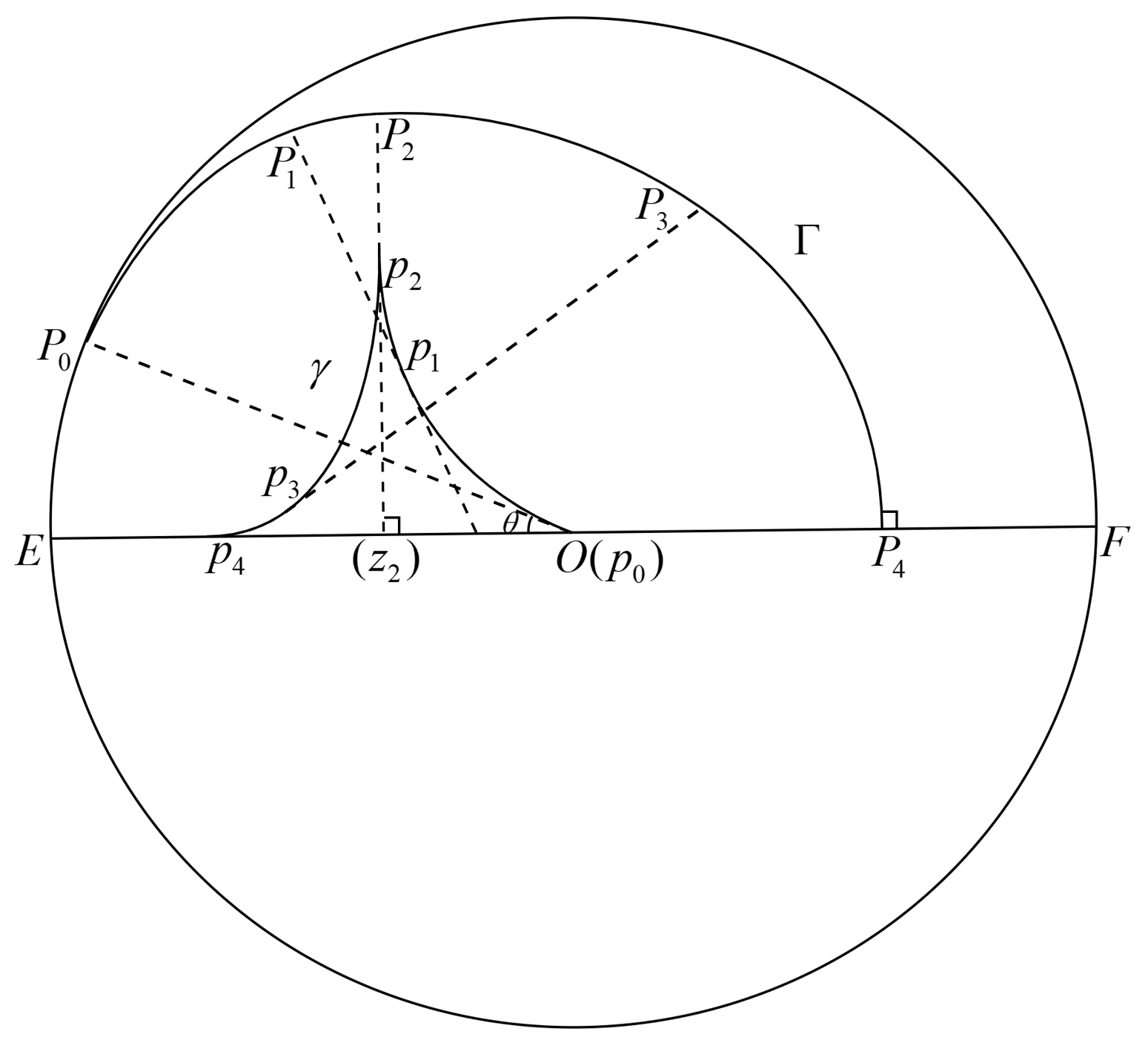}
   \label{fig1}
 \end{figure}

Under this correspondence, the conditions on $\Gamma$ are converted to the following

 \noindent
\textbf{Ansatz for the evolute $\gamma$:}(indicated in the figure)
\begin{enumerate}
  \item $\gamma=\overset{\frown}{p_4p_2p_0}$ is a piecewise $C^2$ curve with $p_0=O$ at the origin, $p_4=(z_4,0)\in EO$. It is also the graph of a non-negative unimodal function $h=h(z)$ with the peak at $p_2=(z_2,h_2)$.
  \item The union $\gamma\cup -\gamma$ on the $(z,y)$ plane looks like an astroid, with exactly three cusps at $p_4, p_2,-p_2$. At $p_0=O$ the tangent line of $\gamma$ is the same as $P_0 O$, thus $h'(0)=\tan\theta$.
  \item $L(\overset{\frown}{p_4p_2})<L(\overset{\frown}{p_2p_0})<1$, ~~~~
by Proposition~\ref{prop-evolute} and the condition $R_4<1$,
      \[
      R_4=L(\overline{p_4P_4})=L(\overset{\frown}{p_4p_2})+L(\overline{p_2P_2}),~~
      1=R_0=L(\overline{p_0P_0})=L(\overset{\frown}{p_0p_2})+L(\overline{p_2P_2}).
      \]

\end{enumerate}
Now the existence of such an evolute is clear. The rest is to check the details on the curvature functions of the profile curve $\Gamma$ and the hypersurface of revolution $\Sigma$.

\begin{proof}[Proof to Theorem~\ref{thm-nonrigid-0}].

\textbf{Step 1: Construction of the evolute $\gamma$.}

It is easy to see that there exists a piecewise smooth curve $\gamma: (z, h(z))$ as the graph of a function $h$ satisfying all of the requirement below:

(1) $h(z)$ is a convex function defined separately on the interval $[z_*,z_2]$ and $[z_2,0]$ where $-1<2z_2<z_*<z_2<0$.

(2) On the interval $[z_2,0]$, $h(z)$ decreases monotonically from a maximal value $h_2\in (0,1)$ to $0$, with $h'(z_2)=-\infty, h'(0)=\tan\theta$.

(3) On the interval $[z_*,z_2]$, $h(z)$ increases monotonically from $0$ to the same maximal value $h_2$, with $h'(z_*)=0, h'(z_2)=+\infty$.

(4) The end points $p_4\triangleq (z_*,0)$ and $p_2\triangleq (z_2,h_2)$ are both cusps of $\gamma$, i.e. we have asymptotic expressions separately as
\[
h(z)\simeq (z-z_*)^{3/2}+O(|z-z_*|^2);~~~~h(z)\simeq h_2-(z-z_2)^{2/3}+O(|z-z_2|).
\]

(5) The arc-length of $\gamma$ on the interval $[z_*,z_2]$ is smaller than that on $[z_2,0]$, and the latter is small than 1, i.e., $L(\overset{\frown}{p_4p_2})<L(\overset{\frown}{p_2p_0})<1$.\\

\textbf{Step 2: Construction of the involute $\Gamma$.} Now we construct the involute $\Gamma$ of the tent-like curve $\gamma$.

Consider a moving point $p_1(t)\in \overset{\frown}{p_2p_0}\subset \gamma$  with arc-length parameter $t$ of $\gamma$, such that $t=0$ at $p_0=O$ and $t=L(\overset{\frown}{p_2p_0})$ at $p_2$. We write
\begin{equation}\label{eq-involute-1}
P_1(t)\triangleq p_1(t)+p'_1(t)(1-t).
\end{equation}
$P_1$ traces a convex curve $\Gamma$ which starts at $P_0=(\cos\theta,\sin\theta)$ and ends at
$P_2=(z_2,y_2)$ with $y_2=h_2+1-L(\overset{\frown}{p_2p_0}))<1$. It is connected to the minor circular arc $\overset{\frown}{EP_0}$ at $P_0$ smoothly ($C^2$).
%According to Proposition~\ref{prop-evolute} and by the conditions on $\gamma$, the tangent line of $\Gamma$ at $P_0$ is orthogonal to $OP_0$; its tangent line at $P_2$ is horizonal (parallel to the $z$-axis).
Along $\Gamma$ from $P_0$ to $P_2$, the curvature radius $1-t$ decreases from 1 to $R_2\triangleq 1- L(\overset{\frown}{p_2p_0})>0$; hence $\kappa$ increases monotonically to the maximum $\kappa(P_2)=1/R_2>1$.

Similarly, let $p_3(s)$ be the arc-length parametrization of the left half of $\gamma$, such that $s=0$ at $p_2$ and $s=L(\overset{\frown}{p_4p_2})$ at $p_0$. When $p_3$ moves along $\overset{\frown}{p_4p_2}\subset \gamma$, the right part of the involute $\Gamma$ is traced by
\begin{equation}\label{eq-involute-2}
P_3(t)=p(t)-p'(t)(t+R_2),~~~R_2=L(\overline{p_2P_2}).
\end{equation}
It satisfies the \textbf{Ansatz for $\Gamma$}. In particular, along $\Gamma$ from $P_2$ to $P_4$, the curvature radius $t+R_2$ increases from $R_2$ to $R_4=R_2+L(\overset{\frown}{p_4p_2})<1$; hence $\kappa(\Gamma)$ decreases monotonically to a local minimum $1/R_2 >1$.

The left details are easy to verify and we conclude $\kappa(\Gamma)>1$ on the perturbed part from $P_0$ to $P_4$.\\

\textbf{Step 3: The hypersurface $\Sigma$ of revolution.}

Let $S^n$ be perturbed to $\Sigma=\Sigma_0\cup S^+_{\theta} (\theta>\pi/2)$ which is still a hypersurface of revolution with profile curve $\Gamma: (z,y(z))$. Then $\Sigma\subset \mathbb{R}^{n+1}$ can be parameterized as
\[
(z,v_1,\cdots,v_{n-1})~\mapsto (z,y(z)\cdot\Theta(v_1,\cdots,v_{n-1}))
\]
where $\Theta(v_1,\cdots,v_{n-1})$ is a parametrization of the $n-1$ dimensional round sphere.
It is well-known that in this case, one principal curvature $\kappa_0$ is the same as $\Gamma$, and the other principal curvature $\kappa_1=\cdots=\kappa_{n-1}$ has multiplicity $n-1$ whose reciprocal $R_1=1/\kappa_1$ is the distance on the normal line between $P_1\in \Gamma$ and $q_1$ (the intersection point of the line $P_1p_1$ with $z$-axis).

For the right branch of $\gamma$, according to the figure and the convexity, it is clear that the line segment $\overline{P_1 q_1}$ is always shorter than the path $\overline{P_1p_1}\cup \overset{\frown}{p_1p_0}$ whose length equals $L(\overline{P_1p_1})+L(\overset{\frown}{p_1p_0})=1=R_0$ (by the involute-evolute property, see Proposition~\ref{prop-evolute}). Thus $R_1<1$ is always true and $\kappa_1>1$ (except at $P_0$ and $p_0=O$). On the left branch $\overset{\frown}{p_2p_4}\subset \gamma$ and the corresponding $\overset{\frown}{P_2P_4}\subset \gamma$, this is proved similarly.

In summary, we know $\kappa_1=\cdots=\kappa_{n-1}>1$ and $\kappa_0=\kappa(\Gamma)>1$ away from the fixed small spherical cap. This completes the proof.
\end{proof}

We can adopt the same idea and construct the perturbation with smaller principal curvatures.

\begin{theorem}\label{thm-nonrigid-2}
When $\theta> \pi/2$, there exist a smooth hypersurface $\tilde\Sigma_0$ as a perturbation of the great spherical cap $S^+_{\theta}$, such that $\tilde\Sigma_0$ is a convex hypersurface of revolution and its principal curvatures $0<k_i< 1, \forall i=1,2,\cdots, n$ away from $\partial S^+_{\theta}$. In particular, this nontrivial perturbation has $H_r(\tilde\Sigma_0)<1$ everywhere.
\end{theorem}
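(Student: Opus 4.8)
The plan is to mirror the construction of Theorem~\ref{thm-nonrigid-1}, but now arranging the profile curve $\Gamma$ to have \emph{larger} curvature radius than the unit circle on the perturbed part, so that all principal curvatures are $<1$. As in the previous proof, everything is controlled through the evolute $\gamma$: by the involute-evolute correspondence (Proposition~\ref{prop-evolute}), if we parametrize $\gamma$ by arclength $t$ starting from the point $p_0$ corresponding to $P_0\in\partial S^+_\theta$, then the curvature radius of $\Gamma$ at the involute point $P(t)$ is $a+t$ for a constant $a$; choosing $a=1$ and letting $t$ increase makes $R=1+t>1$, hence $\kappa(\Gamma)<1$, along the whole perturbed arc. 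The key structural change from the $\kappa>1$ case is that the curvature radius must now \emph{increase} along $\Gamma$ away from $P_0$, so the curvature $\kappa(\Gamma)$ should be unimodal with its \emph{minimum} at the peak $P_2$ and its maximum value $\kappa=1$ attained on the fixed minor arc $\overset{\frown}{EP_0}$; correspondingly the evolute $\gamma$ is again a tent-like piecewise-$C^2$ curve with cusps, but now traversed outward from $p_0$.

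The concrete steps I would carry out: (Step 1) Construct a piecewise smooth, convex, unimodal graph $\gamma:(z,h(z))$ playing the role of the evolute, with a cusp at the point $p_0$ corresponding to $P_0$, tangent line there equal to $OP_0$ (so $h'=\tan\theta$ there, matching the circle's evolute which is the single point $O$), a cusp at the peak $p_2$, and a cusp at the right endpoint $p_4$ meeting the $z$-axis; impose the arclength conditions $L(\overset{\frown}{p_0p_2})$ and $L(\overset{\frown}{p_2p_4})$ small and positive but otherwise free. Because we only need $R>1$ and not any upper bound, the arclength bookkeeping is \emph{looser} than in Theorem~\ref{thm-nonrigid-1}: any positive arclengths will do. (Step 2) Take the involute of $\gamma$ with initial curvature radius $1$ at $p_0$, using the formulas $P(t)=p(t)+p'(t)(1+t)$ on the branch $\overset{\frown}{p_0p_2}$ and the analogous expression with $t+R_2$, $R_2=1+L(\overset{\frown}{p_0p_2})$, on the branch $\overset{\frown}{p_2p_4}$. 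Check that $\Gamma$ joins the minor arc $\overset{\frown}{EP_0}$ with $C^2$ regularity at $P_0$ (the radius matches at $1$ and the evolute is tangent there), that $\Gamma$ is a graph of a concave height function with peak at $P_2$, that $\Gamma$ meets the rotation axis $OF$ orthogonally at $P_4$, and that $\Gamma\cup-\Gamma$ is a convex $C^2$ closed curve; by construction the curvature radius $1+t$ (then $t+R_2$) is strictly increasing away from $P_0$, so $\kappa(\Gamma)<1$ on the whole perturbed part, with $\kappa(P_4)<1$ a local minimum on $\Gamma\cup-\Gamma$. (Step 3) Pass to the hypersurface of revolution $\tilde\Sigma_0$ with profile curve $\Gamma$ about the $z$-axis: one principal curvature equals $\kappa(\Gamma)<1$, and the other (with multiplicity $n-1$) has reciprocal $R_1$ equal to the length of the normal segment from $P_1\in\Gamma$ to its intersection $q_1$ with the axis. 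Since $\gamma$ lies on the same side and, along the right branch, the broken path $\overline{P_1p_1}\cup\overset{\frown}{p_1\text{(to the axis level of }p_0)}$ has length at least $1+t\cdot(\text{something})>1$ while the straight segment $\overline{P_1q_1}$ is even longer (it runs all the way to the axis), we get $R_1>1$, i.e. $\kappa_1<1$, away from the fixed cap; the left branch is handled the same way. Hence all principal curvatures are in $(0,1)$, so $0<H_r(\tilde\Sigma_0)<1$ for every $r$, and in particular this is a nontrivial perturbation of $S^+_\theta$ with $H_r<1$.

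The main obstacle I anticipate is Step~3, verifying $R_1>1$ everywhere on the perturbed part: unlike in Theorem~\ref{thm-nonrigid-1}, where $R_1<1$ followed cleanly from convexity and the evolute lying \emph{inside} the unit disk, here the relevant evolute arc $\gamma$ must be positioned so that the normal segment from any $P_1\in\Gamma$ to the rotation axis stays longer than $1$. This is geometrically plausible because the curvature radius $R_0=1+t$ already exceeds $1$ and $q_1$ lies beyond $p_1$ along the normal line (since $p_1$ is between $P_1$ and the axis by convexity), so $R_1=|\overline{P_1q_1}|\ge|\overline{P_1p_1}|=1+t>1$; but making this inequality rigorous requires checking that $p_1$ really does separate $P_1$ from $q_1$ on the normal line for all relevant points, which depends on the precise placement of the tent-like curve $\gamma$ relative to the axis. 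A secondary technical point is arranging the cusp exponents at $p_0$, $p_2$, $p_4$ (asymptotics $\sim(z-z_*)^{3/2}$ and $\sim h_2-(z-z_2)^{2/3}$ as in the previous construction) so that $\Gamma$ and $\Gamma\cup-\Gamma$ are genuinely $C^2$ and convex at the corresponding vertices; this is routine given the analogous check in Theorem~\ref{thm-nonrigid-1}, so I would simply indicate that the same verification applies verbatim.
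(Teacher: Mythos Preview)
Your overall approach—build the profile curve $\Gamma$ from a prescribed evolute $\gamma$ and then revolve—is exactly what the paper intends (its own proof is omitted as ``almost the same'' as Theorem~\ref{thm-nonrigid-1}). But the picture of the evolute is on the wrong side of the axis, and this breaks Step~3. Just past $P_0$ the center of curvature sits at $P_0+R_0(-\cos\theta,-\sin\theta)=(1-R_0)(\cos\theta,\sin\theta)$, which for $R_0>1$ and $\theta\in(\pi/2,\pi)$ lies in the \emph{fourth} quadrant; moreover along the first branch the evolute tangent has $y$-component $-\cos\psi<0$, so $\gamma$ descends monotonically from $O$ and can only return to the axis at $p_4$ after a cusp at $p_2$. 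Thus $\kappa(\Gamma)$ must be unimodal with a \emph{minimum} at $P_2$ (as you first said); the monotone $R$ in your Step~2 is impossible, and on the second branch one needs $R=R_2-s$ with the mirrored constraint $L(\overset{\frown}{p_2p_4})<L(\overset{\frown}{p_0p_2})$ to keep $R_4>1$. (Incidentally, $P(t)=p(t)+p'(t)(1+t)$ is not an involute: differentiating leaves a nonzero $p'$-component. The correct formula has $R=a-t$ in that orientation.)

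Because $p_1$ lies below the axis, $q_1$ sits \emph{between} $P_1$ and $p_1$, so $R_1=|P_1q_1|=R_0-|q_1p_1|<R_0$, and your proposed inequality $R_1\ge|P_1p_1|$ is false. The correct mirror of Theorem~\ref{thm-nonrigid-1}'s Step~3 is to show $|q_1p_1|<L(\overset{\frown}{p_0p_1})=R_0-1$ from the convexity of the downward tent $\gamma$. A cleaner analytic route: parametrize $\Gamma$ by its tangent angle $\psi\in[-\pi/2,\,\theta-\pi/2]$ and set $g(\psi)=y_\Gamma(\psi)-\cos\psi$, so that $R_1>1$ iff $g>0$. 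From $dy_\Gamma/d\psi=-R_0\sin\psi$ one gets $g'(\psi)=(1-R_0)\sin\psi$, and $g$ vanishes at both endpoints (since $y_{P_0}=\sin\theta=\cos\psi_0$ and $y_{P_4}=0=\cos(-\pi/2)$). As $g'<0$ on $(0,\psi_0)$ and $g'>0$ on $(-\pi/2,0)$, $g$ has a single interior maximum at $\psi=0$ and is strictly positive in between, yielding $R_1>1$ on the whole perturbed part.
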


The proof is almost the same as the previous one. So we omit the detail and only demonstrate the figure of the possible construction.

\begin{figure}[!h]
   \setlength{\belowcaptionskip}{0.2cm}
   \centering
   \includegraphics[width=0.7\textwidth]{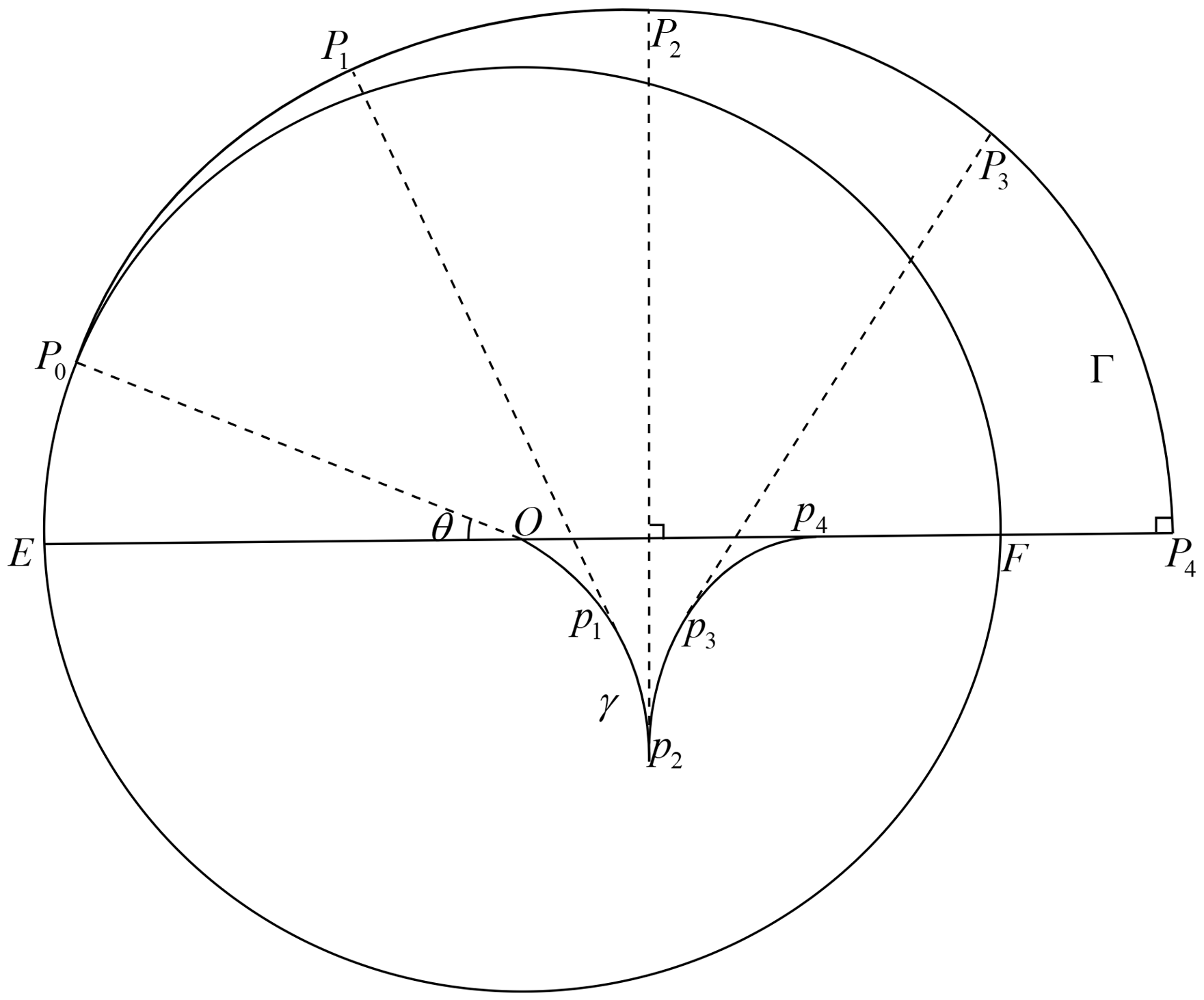}
   \label{fig1}
\end{figure}

\section{Final remarks}
We discuss various possible extensions and raise several open questions. Some partial results will also be announced at here.

\subsection{Relative rigidity of ovaloids}

By ovaloid we mean a closed convex hypersurface in $\mathbb{R}^{n+1}$. For two ovaloids whose curvatures are not necessarily constants, we find a relative version (in the sense of Blaschke) of the \emph{curvature rigidity} as below.

\begin{theorem}\label{thm-rigid-5}[Relative rigidity of ovaloids]
Let $\Sigma$ and $\tilde\Sigma$ be two closed convex hypersurfaces in $\mathbb{R}^{n+1}$ with inward normal vectors. View the Gauss-Kronecker curvature $K=K(\nu)$ and $\tilde{K}=\tilde{K}(\nu)$ as functions depending on the unit normal vector $\nu\in S^n$ (the Gauss image). Suppose $K(\nu)\ge \tilde{K}(\nu)$ for $\nu$ contained in a hemisphere, and $K(\nu)=\tilde{K}(\nu)$ otherwise. Then $\Sigma$ is congruent to $\tilde\Sigma$ up to a translation and $K(\nu)=\tilde{K}(\nu)$ everywhere.
\end{theorem}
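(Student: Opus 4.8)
The plan is to reduce this relative rigidity statement to the trap-slice machinery by passing through support functions, so that the hypotheses on the Gauss-Kronecker curvature as a function of the normal become hypotheses on the $n$-mean curvature of graphs over a common reference hypersurface. Recall that for an ovaloid $\Sigma$ with inward normal, the support function $u\colon S^n\to\mathbb{R}$, $u(\nu)=-\langle p(\nu),\nu\rangle$ where $p(\nu)\in\Sigma$ is the point with inner normal $\nu$, determines $\Sigma$ completely, and the Gauss-Kronecker curvature is recovered from the radii of curvature via $K(\nu)^{-1}=\det(\nabla^2u+u\,g_{S^n})$, the Hessian being with respect to the round metric on $S^n$. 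Thus $K\ge\tilde K$ on a hemisphere $\Omega_0\subset S^n$ is the inequality $\det(\nabla^2u+u\,I)\le\det(\nabla^2\tilde u+\tilde u\,I)$ there, and equality of the two determinants on the complementary hemisphere. Since both matrices are positive definite (convexity), this is exactly a two-sided comparison between solutions of a fully nonlinear elliptic (Monge--Ampère type) equation, to which a maximum-principle/tangency argument applies.

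First I would set up the geometric picture as a direct application of Corollary~\ref{cor-trap-slice} (or the trap-slice lemma itself). Using the Gauss map, parametrize both $\Sigma$ and $\tilde\Sigma$ as graphs of the functions $u$ and $\tilde u$ over $S^n$; the condition $K(\nu)=\tilde K(\nu)$ on the hemisphere $S^n\setminus\Omega_0$ says the two graphs agree there up to second order, so we are genuinely in the situation of a perturbation with fixed boundary data along $\partial\Omega_0$ (the equator). The content of the theorem is then: a graph over the hemisphere $\Omega_0$ with $n$-mean curvature (equivalently, $1/K$) pointwise on one side, sharing $C^2$ boundary data with the unperturbed piece, must coincide with it. This is precisely the hemisphere rigidity proved in Section~4 — it is the support-function analogue of Theorem~\ref{thm-rigid-2}/Corollary~\ref{cor-rigid-hemisphere}, case $r=n$, $H_n\ge 1$ versus $\le 1$. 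Concretely: translate $\tilde\Sigma$ so that it is enclosed by (or encloses) $\Sigma$ near the common part, build as trap the region between $\tilde\Sigma$ and the reflection/translation family of copies of $\Sigma$, slice it by that family of ovaloids (all with the prescribed $K$-profile, hence controlled $H_n$), and invoke the Tangency Principle for $r=n$ at the extremal tangency point. Because every leaf of the slice and $\Sigma$ itself is convex, the $n$-convexity hypothesis needed in Theorem~\ref{thm-TP1} is automatic.

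The main obstacle, and the step that needs care, is verifying that the hypotheses genuinely produce a valid trap-and-slice configuration — i.e., that the convex body $\tilde B$ bounded by $\tilde\Sigma$ is actually contained in (or contains) the body bounded by $\Sigma$ near the hemisphere where $K\ge\tilde K$, so that a genuine "above/below" relation of graphs holds and the comparison family fits between them. This inclusion is not a tautology from the pointwise curvature inequality alone; one extracts it from the equality region: since $K=\tilde K$ on the complementary hemisphere and the support functions solve the same Monge--Ampère equation there with matching Cauchy data along the equator, unique continuation forces $u=\tilde u$ on that whole hemisphere, hence the two ovaloids literally share an open hemispherical cap; translating is then unnecessary, and the remaining piece is exactly a fixed-boundary perturbation to which Corollary~\ref{cor-rigid-hemisphere} applies verbatim. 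Once the shared-cap reduction is in place, the tangency argument runs as in Section~4 and yields $K\equiv\tilde K$ and congruence up to translation (translation being the only ambiguity left by the support function). I would also remark, following Remark~\ref{remark-dumbbell}, that in this $H_n$ situation one can alternatively avoid PDE entirely and argue by the total-curvature integral $\int_{S^n}K^{-1}\,d\sigma=\mathrm{vol}(\tilde B)$ together with a volume/mixed-volume comparison, which gives an independent and essentially elementary proof.
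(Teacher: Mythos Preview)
Your trap-slice reduction hinges on the claim that $K(\nu)=\tilde K(\nu)$ on the complementary hemisphere forces the two ovaloids to literally share a hemispherical cap, i.e.\ that $u=\tilde u$ there (up to a linear function). This step fails. Knowing that $u$ and $\tilde u$ satisfy $\det(\nabla^2 u+uI)=\det(\nabla^2\tilde u+\tilde uI)$ on a hemisphere gives you no Cauchy data to match along the equator, so ``unique continuation'' has nothing to bite on; and in fact equality of Gauss--Kronecker curvature as a function of the normal on an open set does \emph{not} imply the surfaces coincide there even up to translation. The paper flags precisely this pitfall in the remark following the theorem and gives an explicit local counterexample: two elliptic paraboloids $r=(au,bv,\tfrac12(au^2+bv^2))$ and $r_*=(a_*u,b_*v,\tfrac12(a_*u^2+b_*v^2))$ with $ab=a_*b_*$ have the same normal and the same Gauss curvature at corresponding parameters, yet are not congruent. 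So your ``shared-cap reduction'' is not available, and with it the whole trap-slice comparison collapses: there is no fixed boundary, hence no region in which Corollary~\ref{cor-rigid-hemisphere} applies.

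The paper's actual argument bypasses this completely with a one-line integral identity. For any ovaloid one has the Minkowski balance condition $\int_{S^n} x_i\,K(\nu)^{-1}\,d\nu=0$ for each coordinate function $x_i$. Subtracting the identities for $K$ and $\tilde K$ and taking $x_i$ to be the height function that is positive exactly on the hemisphere where $K\ge\tilde K$ (hence $K^{-1}-\tilde K^{-1}\le 0$ there, and the difference vanishes on the other hemisphere), the integrand has a fixed sign and integrates to zero, forcing $K\equiv\tilde K$; congruence up to translation is then the uniqueness in the Minkowski problem. Your closing remark about a total-curvature/volume argument is in the right spirit of using an integral identity, but the relevant one is the \emph{first moment} $\int_{S^n} x_i/K=0$, not the zeroth moment $\int_{S^n} 1/K$ (which is the surface area of $\Sigma$, not $\mathrm{vol}(\tilde B)$), and no nestedness or area comparison is needed.
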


At the beginning, we just wanted to generalize the mean curvature rigidity theorem to the case that the curvature function is not a constant. In the previous arguments, it seems that one only need to compare the curvatures of $\Sigma$ and $\tilde\Sigma$ at points with the same normal direction.
This motivated us to conjecture that Theorem~\ref{thm-rigid-5} might be true.

Indeed, Theorem~\ref{thm-rigid-5} follows directly from the well-known integral identity below (related with the classical Minkowski Problem):
\[
\int_{S^n} \frac{x_i}{K(\nu)} d\nu =\int_{\Sigma} \nu \cdot E_i ~d Vol(\Sigma)=0,
\]
where $x_i$ are the coordinate functions and $E_i$ is the standard $i-$th coordinate vector of $S^n$.

On the other hand, given any ovaloid $\Sigma$ with positive Gauss-Kronecker curvature $K(\nu)>0$,
it is always possible to find a perturbation of it, denoted as $\tilde\Sigma$ with Gauss-Kronecker curvature $\tilde{K}(\nu)>0$,
such that $K(\nu)=\tilde{K}(\nu)$ when $\nu$ is restricted in a small spherical cap of $S^n$,
and $K(\nu)<\tilde{K}(\nu)$ (or $K(\nu)>\tilde{K}(\nu)$) on the complementary great spherical cap.
We need only to construct $\tilde{K}(\nu)$ on $S^n$ such that
$\int_{S^n} \frac{x_i}{K(\nu)} d\nu =0$ and $K(\nu)=\tilde{K}(\nu)$ on the given small cap, which is not difficult.
Then by the positive answer to the Minkowski problem (by the work of Nirenberg, Pogorelov, and Cheng-Yau), there exists such an ovaloid $\tilde\Sigma$ with $\tilde{K}$ as the desired Gauss-Kronecker function. So the relative rigidity is not true in this case.

\begin{remark}
It seems that Theorem~\ref{thm-rigid-5} can be proved by the \emph{moving plane} method similarly,
combining with the basic facts on the curvature integral of the Gauss-Kronecker curvature $K$
(which is a constant when the perturbation is a regular homotopy and the boundary is fixed up to $C^2$)
and the well-known \emph{Area Comparison} between two nested ovaloids (the outside one has larger area).
But we should point out a hidden flaw in this method.
One tends to believe that $\Sigma=\tilde\Sigma$ on the subset with the same Gauss-Kronecker curvature $K(\nu)=\tilde{K}(\nu)$ when their supporting planes are parallel.
Yet this is not a condition in the statement of Theorem~\ref{thm-rigid-5},
and it might be wrong if we restrict only to smooth convex hypersurfaces with boundaries.

For example, considering two elliptic paraboloids $r(u,v)=(au,bv,\frac{1}{2}(au^2+bv^2))$ and
$r_*(u,v)=(a_*u,b_*v,\frac{1}{2}(a_*u^2+b_*v^2))$ with parameters $(u,v)$ and positive constants $a,b,a_*,b_*$ such that $ab=a_* b_*$. Calculation shows that they have the same normal direction and equal Gauss curvature on points with the same parameter $(u,v)$; yet these two surfaces are not isometric.
%(This is an elementary example in do Carmo's textbook \cite{do-Carmo}).
\end{remark}

%The consideration above raises an interesting question which might be still open.\begin{problem}\label{problem-congruent}If two ovaloid $\Sigma, \tilde\Sigma$ has the same Gauss-Kronecker curvature $K(\nu)=\tilde{K}(\nu)$ when the unit normal vector $\nu$ is restricted in a spherical cap $S_{\theta}$,can we conclude that $\Sigma, \tilde\Sigma$ are congruent on the subset of $\Sigma$ where $\nu\in S_{\theta}$? \end{problem}

We can also consider the relative rigidity problem for the mean curvature $H$.

\begin{problem}\label{problem-relative-H}
Let $\Sigma$ and $\tilde\Sigma$ be two closed convex hypersurfaces in $\mathbb{R}^{n+1}$. View the mean curvature $H=H(\nu)$ and $\tilde{H}=\tilde{H}(\nu)$ as positive functions depending on the unit normal vector $\nu\in S^n$ (the Gauss image). Suppose $H(\nu)\ge \tilde{H}(\nu)$ for $\nu$ contained in a subset of the hemisphere, and $H(\nu)=\tilde{H}(\nu)$ on the other hemisphere. Must $\Sigma$ be congruent to $\tilde\Sigma$ up to a translation and $H(\nu)=\tilde{H}(\nu)$ everywhere?
\end{problem}

\subsection{Nontrivial deformations of small spherical caps}

If we allow $\Sigma$ to have intersection, then it is not difficult to construct counter-examples with self-intersection and with larger mean curvature $H>1$ on the perturbed small spherical cap (at least in the 1-dimensional case it is easy to imagine a closed curve with winding number=2; it is not regularly homotopic to the original circle).

Moreover, we can construct a family of connected and properly embedded hypersurfaces of revolution $\Sigma$ as perturbations of the hemisphere, which has $H<1$ on an open subset and it is not a graph over the coordinate plane $x_0=0$. The shape looks like a \emph{dumb bell}. The detail of the construction will appear in the forthcoming paper \cite{MaWang}; but intuitively it is easy to imagine and a picture is given below.

 \begin{figure}[!h]
   \setlength{\belowcaptionskip}{0.2cm}
   \centering
   \includegraphics[width=0.7\textwidth]{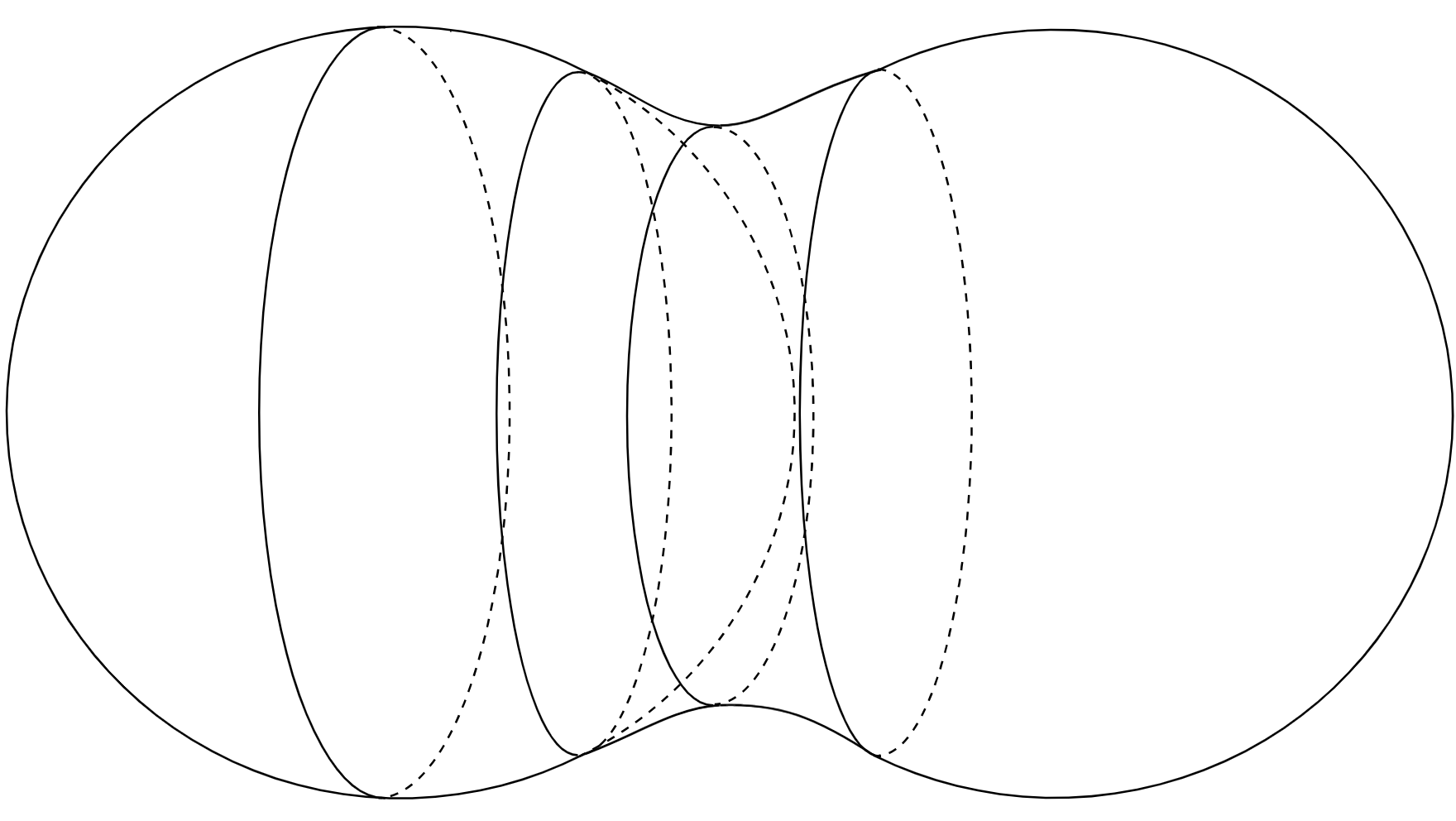}
   \label{fig1}
 \end{figure}

This shows that the graph condition in the statement of Theorem~\ref{thm-rigid-2}, as well as the dumbbell-domain restriction $\Sigma\subset \Omega$ in the statement of Theorem~\ref{thm-rigid-4}, are both necessary, and these two theorems are best possible.

\subsection{Perturbations on other sphere region}
Can we consider more general perturbations on other subsets of the sphere? The easiest case next to a cap is a double connected domain, like an annulus around the equator.

In this respect, there was an interesting intrinsic result in \cite{HangFB-06} (Theorem 2.8 in it).
Here $B(S, a)$ and $B(N, a)$ are geodesic balls with radius $b$ and centered at the north pole $N$ and south pole $S$, respectively.

\begin{theorem}[F.B. Hang and X.D. Wang 2006 \cite{HangFB-06}]
For any $a \in (0,\pi/2)$, there exists a smooth metric $g$ on $S^n$
such that $g = g_{S^n}$ on $B(S, a)\cup B(N, a)$ and the sectional curvature of $g$ is
at least $1$ and larger than $1$ somewhere.
\end{theorem}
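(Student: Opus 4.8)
The plan is to realize $g$ as a rotationally symmetric metric on $S^n$, writing $g=\d t^2+\phi(t)^2 g_{S^{n-1}}$, where $t$ is the distance to the ``north pole'' and $\phi\colon[0,L]\to[0,\infty)$ satisfies $\phi>0$ on $(0,L)$ and $\phi(0)=\phi(L)=0$; smoothness of the metric at the two poles is equivalent to $\phi$ being odd at each endpoint with $\phi'(0)=1$ and $\phi'(L)=-1$. The sectional curvatures of such a metric are $-\phi''/\phi$ on the $2$-planes containing $\partial_t$ and $(1-(\phi')^2)/\phi^2$ on the $2$-planes tangent to the $S^{n-1}$-fibres, so ``$\sec_g\ge 1$ everywhere'' is exactly the pair of inequalities $\phi''+\phi\le 0$ and $\phi^2+(\phi')^2\le 1$ on $(0,L)$; the round sphere is the case $L=\pi$, $\phi=\sin t$, where both become equalities.

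The first step is a reduction that kills half of the work: $\phi''+\phi\le0$ together with the boundary data above \emph{automatically} implies $\phi^2+(\phi')^2\le1$. Indeed, $\phi''+\phi\le0$ forbids an interior local minimum of $\phi$ with positive value, so $\phi$ is unimodal, increasing on $[0,t_{\max}]$ and decreasing on $[t_{\max},L]$. Then $E:=\phi^2+(\phi')^2$ has $E'=2\phi'(\phi+\phi'')$, so $E$ is nonincreasing on $[0,t_{\max}]$ and nondecreasing on $[t_{\max},L]$; since $E(0)=E(L)=1$ we get $E\le1$ throughout, with $E<1$ wherever $\phi''+\phi<0$. Thus it suffices to build a smooth $\phi$ with $\phi''+\phi\le0$ which equals $\sin t$ near the poles and has $\phi''+\phi<0$ somewhere in the middle.

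For the construction, fix $L$ with $2a<L<\pi$ and $L$ close to $\pi$ (possible since $a<\pi/2$). Put $\phi(t)=\sin t$ on $[0,a]$, $\phi(t)=\sin(L-t)$ on $[L-a,L]$, and on $[a,L-a]$ let $\phi$ solve $\phi''+\phi=-\rho$ with $\phi(a)=\sin a$, $\phi'(a)=\cos a$, where $\rho\ge0$ is a smooth bump concentrated near the midpoint $L/2$, vanishing to infinite order at $a$ and $L-a$. Since $\rho\equiv0$ near the endpoints, $\phi$ coincides with $\sin t$ near $t=a$ (so the match there is $C^\infty$) and with a homogeneous solution near $t=L-a$; the only remaining requirement is that this homogeneous solution equal $\sin(L-t)$, i.e. $\phi(L-a)=\sin a$ and $\phi'(L-a)=-\cos a$. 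By variation of parameters these two conditions prescribe the ``moment vector'' $\bigl(\int\rho(s)\cos(L-a-s)\,\d s,\ \int\rho(s)\sin(L-a-s)\,\d s\bigr)$, and a short computation shows the required value is $2\cos(L/2)\,\bigl(\cos(L/2-a),\sin(L/2-a)\bigr)$, which lies in the \emph{interior} of the convex cone generated by $\{(\cos u,\sin u):0<u<L-2a\}$. Hence it is attained by a nonnegative density $\rho$ supported near $s=L/2$ (then perturbed slightly to make the moments exact). For $L$ close to $\pi$ one has $\|\rho\|_{L^1}\approx 2\cos(L/2)$, which is small, so $\phi\ge \sin t-\|\rho\|_{L^1}>0$ on $(a,L-a)$ because $\sin t\ge\sin a>0$ there. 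The resulting $g$ is smooth on $S^n$, equals $g_{S^n}$ on the caps $\{t\le a\}$ and $\{t\ge L-a\}$ (which are isometric to $B(N,a)$ and $B(S,a)$), has $\sec_g\ge1$ everywhere, and $\sec_g=1+\rho/\phi>1$ on the radial planes over the band where $\rho>0$.

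The conceptual heart of the argument — and the point easiest to miss — is that the two fixed round caps must be joined through an axis of length $L<\pi$, i.e. the equatorial band is made \emph{thinner}; one cannot keep the band at its original width. Indeed, if $L=\pi$ then integrating against the weight $\sin t$ gives $\int_0^\pi(\phi''+\phi)\sin t\,\d t=0$ for \emph{every} admissible $\phi$, so $\phi''+\phi\le0$ forces $\phi''+\phi\equiv0$ and hence $\phi=\sin t$: no nontrivial perturbation exists with the band's width preserved. The only other step that needs real care is producing $\rho\ge0$ with the prescribed moments, which is precisely the convex-cone statement above; the unimodality and monotonicity bookkeeping, the smooth matching at $t=a,L-a$, and the positivity of $\phi$ are then routine.
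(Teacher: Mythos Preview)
The paper does not contain a proof of this theorem: it is quoted from Hang--Wang \cite{HangFB-06} (their Theorem~2.8) as background for the discussion in Section~6.3, and no argument for it is given here. So there is no ``paper's own proof'' to compare against.

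That said, your argument is essentially correct and is the standard warped-product approach. A couple of remarks:

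\emph{(i) The identification with the standard $S^n$.} You construct a rotationally symmetric metric with total radial length $L<\pi$ whose polar caps are \emph{isometric} to $B(N,a)$ and $B(S,a)$. The statement, however, asks for a metric on the standard $S^n$ with $g=g_{S^n}$ \emph{literally} on the given balls. This is easily fixed by pulling back along a diffeomorphism $(t,\omega)\mapsto(\sigma(t),\omega)$ with $\sigma\colon[0,\pi]\to[0,L]$ equal to the identity on $[0,a]$ and to $t-(\pi-L)$ on $[\pi-a,\pi]$; it is worth saying so explicitly.

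\emph{(ii) The moment-matching step can be made cleaner.} If you take $\rho$ symmetric about $L/2$, a two-line computation shows that
\[
\Bigl(\int\rho\cos(L-a-s)\,\d s,\ \int\rho\sin(L-a-s)\,\d s\Bigr)
=\Bigl(\int\rho(L/2+u)\cos u\,\d u\Bigr)\cdot\bigl(\cos(L/2-a),\sin(L/2-a)\bigr),
\]
so you hit the target direction exactly and only need to scale the bump so that $\int\rho\cos u\,\d u=2\cos(L/2)$. No ``perturb slightly to make the moments exact'' is needed.

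\emph{(iii) Logical order for the reduction.} Your unimodality argument uses $\phi>0$ on $(0,L)$, which you only establish later via the explicit construction. This is fine for the existence proof, but as written the ``reduction'' paragraph reads as a general claim. Either assume $\phi>0$ there, or move the positivity check before invoking unimodality.

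With these cosmetic adjustments the proof is complete.
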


It is natural to ask:

\begin{problem}\label{problem-equator}
Can one perturb $S^n\subset \mathbb{R}^{n+1}$ in a given neighborhood $U$ of the equator $S^{n-1}$ such that:\\
 \indent (1) the perturbation $\Sigma$ still agree with $S^n$ outside $U$, and \\
 \indent (2) $\Sigma$ has sectional curvature larger than $1$ inside $U$?
\end{problem}

For this problem, surprisingly we have both rigidity and non-rigidity result. Consider $a \in (0,\pi/2), b=\pi/2-a$ and $B(S, a)\cup B(N, a)\subset S^n\subset \mathbb{R}^{n+1}$ with $n\ge 2$. The supplementary subset $B_0$ (a closed annulus with width $2b=\pi-2a$) is a neighborhood of the equator $S^{n-1}$ with radius $b$.

Our result in a forthcoming paper \cite{MaWang} says that there exists a critical value $a_0>0$ (dependent on $n$) such that:
\begin{itemize}
  \item When $a\in (0,a_0)$, there is no smooth perturbation of $S^n$ which fixes $B(S, a)\cup B(N, a)$ and has sectional curvature $K>1$ somewhere;
  \item When $a\in (a_0,\pi/2)$, there exist surfaces of revolution as smooth perturbations of $S^n$ which fix $B(S, a)\cup B(N, a)$ and has sectional curvature $K>1$ on the annulus $B_0$ with width $2b$.
\end{itemize}

\subsection{Rigidity or Non-rigidity? Open questions}

We can also consider perturbations of other kind of minimal/CMC surfaces with compact support in the space forms. In most of these cases, it is interesting to determine the borderline case between rigidity and non-rigidity.

\begin{problem}\label{problem-cmc}
Let $M$ be the round cylinder or the catenoid. Consider a perturbation of $M$, denoted as $\Sigma$, has larger (or smaller) mean curvature $H$ on the perturbed domain which has compact support $B_0$.
On what kind of domain such nontrivial $\Sigma$ is allowed?
\end{problem}

We have also obtained some partial results in this direction. In particular, for the cylinder case the discussion is closely related with the Delaunay surfaces (CMC surfaces of revolution).

\begin{remark}
It should be not difficult to generalize the mean curvature rigidity and non-rigidity results to a geodesic sphere $S^n=\partial B^{n+1}(r)$ in other space forms, where $B^{n+1}(r)$ denotes a geodesic ball of radius $r$. For the hyperbolic space $\mathbb{H}^{n+1}(-1)$ of constant sectional curvature $-1$ the conclusions should be similar. On the other hand, in the standard sphere $\mathbb{S}^{n+1}(1)$ of constant sectional curvature $1$, if $r<\pi/2$, we conjecture that the rigidity theorem is still true if the perturbation $\Sigma$ is restricted inside a hemisphere $ B^{n+1}(1)\subset \mathbb{S}^{n+1}$. The special case when $r=\pi/2$ (the equator) is subtler and we leave it to interested readers.
\end{remark}

Finally, related with Gromov's original theorem at the beginning, we ask a question, which seems still open. We conjecture that the answer is affirmative.

\begin{problem}
Let $\Sigma$ is a smooth perturbation of the hyperplane $M=\mathbb{R}^n \subset \mathbb{R}^{n+1}$, which is contained between two parallel hyperplanes $M_0,M_1// M$. Must $\Sigma$ be also a hyperplane parallel to $M$?
\end{problem}

\section{Appendix: The discrete version for polygons}
In this independent appendix, we treat a discrete version of the rigidity problem in the 1-dim case. Instead of a circle, we will consider convex polygon inscribed in a circle and its perturbations. For such objects there is also a notion of \emph{discrete curvature} at each vertex, called \emph{the Menger curvature} \cite{Jecko}. We will show that one can not increase this discrete curvature on a convex polygon inscribed in a minor arc or a semi-circle.

\begin{definition}\cite{Jecko}
For a convex polygon $A_1A_2 \cdots A_n(n \ge 3), 1 \le i \le n$ for $i\in \mathbb{Z}/n\mathbb{Z}$, the radius of the circumscribed circle of $\bigtriangleup A_{i-1} A_{i} A_{i+1}$ is denoted as $R_i\triangleq R_{\bigtriangleup A_{i-1} A_{i} A_{i+1}}$. The \emph{Menger curvature} at vertex $A_i$ is the reciprocal $M(A_i)=1/R_i$.
\end{definition}

 \begin{figure}[!h]
   \setlength{\belowcaptionskip}{0.7cm}
   \centering
   \includegraphics[width=0.5\textwidth]{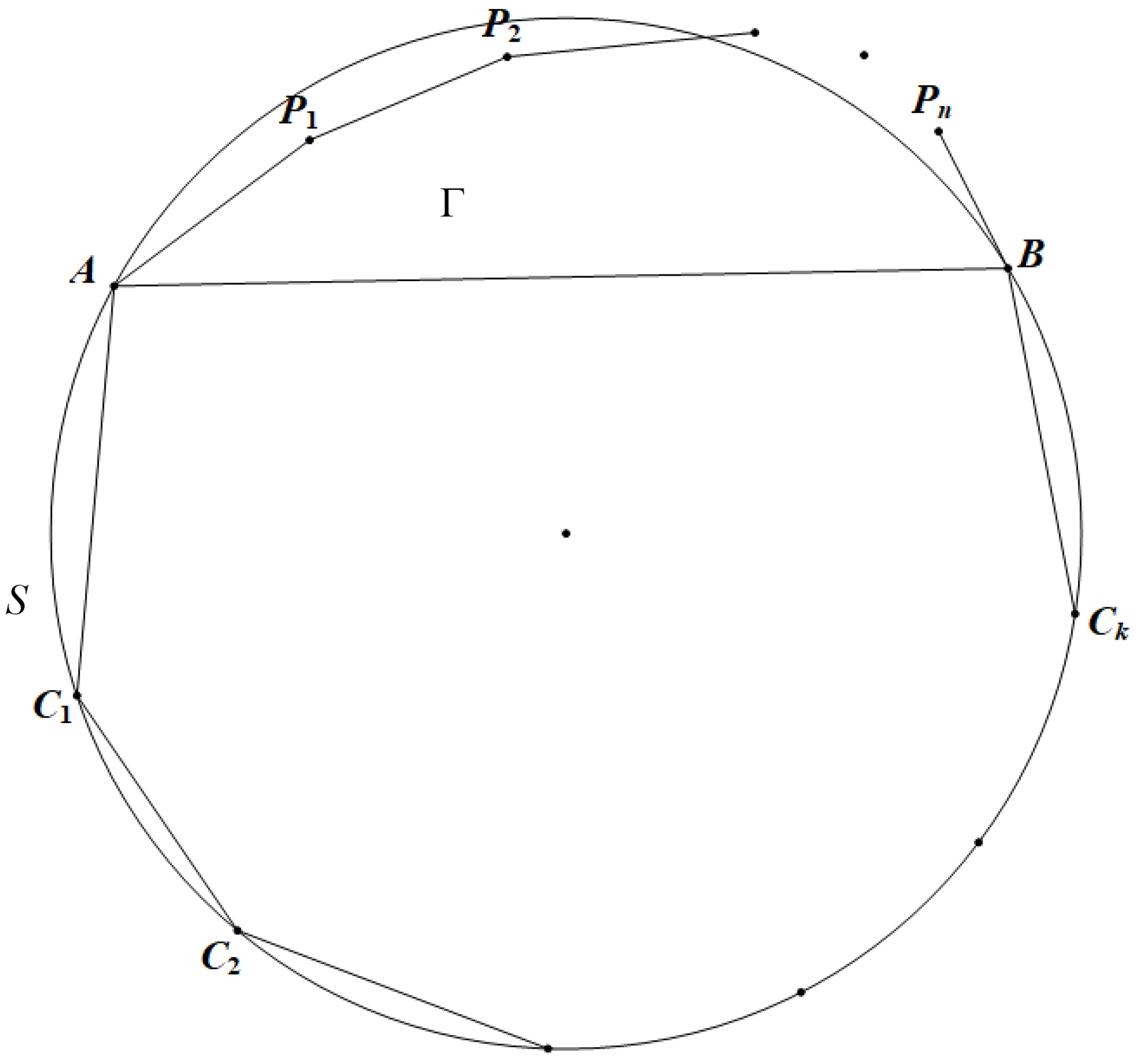}
   \caption{Perturbation of the polygon}
   \label{fig1}
 \end{figure}
As in Figure~1, $S_+=\overset{\frown}{AB}$ is a minor arc or a semicircle of the circle $S$ with radius $R$. This means that the central angle of $\overset{\frown}{AB}$ is not greater than $\pi$. Consider the convex polygon (as a perturbation of the \emph{discrete circle}) given by
\begin{align*}
\Gamma=AP_1P_2 \cdots P_n BC_k C_{k-1} \cdots C_1
\end{align*}
with $k,n \ge 1$, and $C_1,C_2 \cdots C_k \in S_-\triangleq S\setminus\overset{\frown}{AB}$ are located on the supplementary major arc of $S$ in order.

\begin{theorem}\label{thm-discrete}
In $\Gamma$, if the Menger curvatures of $A,P_1, P_2 \cdots P_n, B$ are all greater than or equal to $1/R$, then $M(A)=M(P_i)=M(B)=1/R$, and $\Gamma$ is inscribed in $S$.
\end{theorem}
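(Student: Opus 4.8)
The plan is to run a \emph{discrete} version of the trap--slice mechanism of Theorem~\ref{thm-trap-slice}, with circular arcs through the two fixed endpoints $A,B$ serving as the foliation and an elementary ``discrete Tangency Principle'' for three consecutive vertices replacing the elliptic maximum principle, and then to propagate the conclusion along the perturbed chain using the boundary data $M(A),M(B)\ge 1/R$. Normalise so that $S$ has radius $R$, centre $O$, and $A,B$ are symmetric about a diameter, with $S_+$ the minor (or semicircular) arc of central angle $\le\pi$. Since $A,B,C_1,\dots ,C_k$ already lie on $S$, the assertion ``$\Gamma$ is inscribed in $S$'' reduces to showing every $P_i\in S_+$; and once that is known, every consecutive triple of vertices of $\Gamma$ has $S$ itself as circumcircle, so all the Menger curvatures equal $1/R$, which finishes the theorem. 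The argument has three ingredients: a convexity reduction, one trap--slice type step (the only place that uses a maximum--principle argument), and an elementary ``bulge lemma'' feeding an induction along the chain $AP_1\cdots P_nB$.

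\emph{Convexity reduction and the bulge lemma.} A chord of a convex polygon cuts its boundary into two chains lying in the two closed half--planes bounded by the chord; since $k\ge 1$ and each $C_j$ is an interior point of the major arc, hence strictly below the line $AB$, the chain $AC_1\cdots C_kB$ lies in the lower half--plane, so $AP_1\cdots P_nB$ lies in the closed half--plane $H^+$ containing $S_+$. The bulge lemma I would isolate is: if $X,Y\in S$, if $[XY]$ is an edge of $\Gamma$, and if $Z$ is any other vertex of $\Gamma$ lying strictly outside the closed disc $\overline D$ bounded by $S$, then the circumradius of $\triangle XYZ$ exceeds $R$. Proof idea: the interior of $\Gamma$ contains $O$ (the $C_j$ span the major arc; when $\beta=\pi/2$ one has $O\in\partial\Gamma$, still enough), so $Z$, lying on the interior side of the edge--line $XY$, lies on the major--arc side of the chord $XY$; the circumcircle of $\triangle XYZ$ meets $S$ only at $X,Y$, hence its arc on $Z$'s side lies entirely outside $\overline D$ and therefore bulges strictly farther from the line $XY$ than the major arc of $S$ does, and among all circles through $X,Y$ a larger bulge of the major arc forces a strictly larger radius.

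\emph{The trap--slice step (the main obstacle).} I claim no $P_i$ lies strictly inside $D$. Foliate the lens between the chord $AB$ and $S_+$ by the flatter circular arcs $\{\Gamma_s\}$ through $A,B$ (radii $>R$, hence curvature $<1/R$). If some $P_i$ were strictly inside $D$ it would lie, by the half--plane step, in this open lens; let $P_j$ be a vertex realising the deepest (flattest) arc $\Gamma_\sigma$ reached. Then the polygon--neighbours of $P_j$ lie weakly on the $S_+$--side of $\Gamma_\sigma$, hence outside the disc of $\Gamma_\sigma$, the two edges at $P_j$ both leave toward the $S_+$--side, and the interior angle of $\Gamma$ at $P_j$ opens toward the centre of $\Gamma_\sigma$. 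A discrete Tangency Principle --- proved by comparing the circumcircle of $\triangle P_{j-1}P_jP_{j+1}$ with $\Gamma_\sigma$ at the common point $P_j$: a strictly smaller circumcircle would, near $P_j$, bend into the disc bounded by $\Gamma_\sigma$ and so drag a neighbour inside it, contradicting the previous line --- then yields $M(P_j)<1/R$, contradicting the hypothesis. This is the discrete counterpart of the wall--breaking phenomenon in the smooth theorems, and I expect the delicate points to be exactly here: verifying that the interior angle at $P_j$ genuinely opens toward the centre, and excising the degenerate case in which a neighbour of $P_j$ falls on the open segment $AB$.

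\emph{Induction and conclusion.} Now propagate from $A$. The edge $[C_1A]$ of $\Gamma$ has both ends on $S$, so the bulge lemma applied with $Z=P_1$ shows $M(A)\ge 1/R$ forces $P_1\in\overline D$; combined with the trap--slice step this gives $P_1\in\partial D=S$, hence $P_1\in S\cap H^+=S_+$. Inductively, if $P_1,\dots ,P_{i-1}\in S$ then $[P_{i-2}P_{i-1}]$ is an edge of $\Gamma$ with both ends on $S$, so $M(P_{i-1})\ge 1/R$ and the bulge lemma force $P_i\in\overline D$, whence $P_i\in S$ by the trap--slice step and $P_i\in S_+$. Therefore every $P_i\in S_+$, so $\Gamma$ is inscribed in $S$, and since every consecutive triple then has $S$ as its circumcircle, $M(A)=M(P_i)=M(B)=1/R$. (In the semicircular case $\beta=\pi/2$, where $AB$ is a diameter of $S$, the bulge lemma still holds with the obvious modifications, using that $[AB]$ is never an edge of $\Gamma$ when $n\ge 1$.)
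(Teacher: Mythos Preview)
Your approach is genuinely different from the paper's. The paper does \emph{not} run a trap--slice argument on the lens between $\overline{AB}$ and $S_+$. Instead it reduces to the case $k=1$ (one point $C$ on the major arc with $\overset{\frown}{AC}$ minor), proves directly from $M(A)\ge 1/R$ that $P_1$ lies in the closed arch $D_+$ (this is essentially your bulge lemma, specialised to the edge $C A$), and then \emph{inducts on $n$ by changing the reference circle}: it replaces $S$ by the circumcircle $S'$ of $\triangle C A P_1$, replaces the pair $(C,A)$ by $(A,P_1)$, checks that $\overset{\frown}{AP_1}$ is again a minor arc of $S'$, and applies the induction hypothesis to the shorter chain $P_1P_2\cdots P_nB$. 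No foliation and no extremal vertex is ever used.

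Your proposal, by contrast, hinges entirely on the ``discrete Tangency Principle'' inside the trap--slice step, and this is where it is incomplete. You claim that if $P_j$ realises the flattest arc $\Gamma_\sigma$ and both neighbours lie weakly on the $S_+$--side of $\Gamma_\sigma$, then the circumradius of $\triangle P_{j-1}P_jP_{j+1}$ must be at least $R_\sigma$. This is \emph{not} a general fact about three points with one on a circle and two outside it: a small circle externally tangent to $\tilde\Gamma_\sigma$ at $P_j$ has both remaining points outside $\tilde\Gamma_\sigma$ yet arbitrarily small radius. To rule this out you invoke ``the interior angle at $P_j$ opens toward the centre $O_\sigma$,'' but you never prove this, and it does not follow merely from convexity of $\Gamma$ and $P_j\in H^+$: convexity tells you the interior is on one side of each edge--line at $P_j$, but $O_\sigma$ can sit far below $\overline{AB}$, possibly outside both half--planes. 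Moreover, the extremal $P_j$ may well be $P_1$ or $P_n$, in which case one neighbour is $A$ or $B$, lying \emph{on} every $\Gamma_\sigma$; your tangency argument then degenerates (two of the three points are on $\tilde\Gamma_\sigma$, and the third being outside gives no lower bound on the circumradius without further information about which arc of $\tilde\Gamma_\sigma$ is involved). You flag both issues yourself but do not resolve them, and they are exactly the substance of the proof. The paper's change--of--circle induction sidesteps all of this: at each step one works with a single explicit circle and a single explicit minor arc, and the only geometric input is the inscribed--angle monotonicity used in your bulge lemma.
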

\textbf{Analysis:}
$\overline{AB}$ divides the closed disk enclosed by $S$ into two arches. $D_-$ is the lower arch containing $\{C_i\}\subset S_-$, and $D_+$ is the upper arch with $\partial D_+=S_+\cup \overline{AB}$. One of $\overset{\frown}{AC_1}$ and $\overset{\frown}{BC_k}$ must be a minor arc; thus without loos of generality, we assume $\overset{\frown}{AC_1}$ is a minor arc.

To prove the theorem, it is clear that we only need to prove the following

\begin{lemma}
As in Figure~2, consider $\Gamma$ with $k=1$ and $C_1=C$. If $\overset{\frown}{AC}$ is a minor arc in $S$, and the Menger curvatures at $A,P_1, P_2 \cdots P_n$ are all greater than or equal to $1/R$, then the Menger curvatures are all equal to $1/R$, and $\Gamma$ is inscribed in $S$.
\end{lemma}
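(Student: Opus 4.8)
The plan is to set up a discrete analogue of the trap-slice recipe. The arc $\overset{\frown}{AC}$ being minor means the chord $\overline{AC}$, together with this minor arc, bounds a thin ``lune'' region; more importantly, the polygon $\Gamma$ (with $k=1$) can be viewed as a perturbation of the polygonal path $AP_1^0\cdots P_n^0 B C$ inscribed in $S$, where the perturbed vertices $P_1,\dots,P_n$ have been pushed off the circle while $A,B,C$ stay on $S$. The key geometric fact I would isolate first is the following monotonicity: for a triangle $\triangle XYZ$ with $X,Z$ fixed, the circumradius $R_{\triangle XYZ}$ is a strictly monotone function of the position of $Y$ as $Y$ moves along any ray from the arc $\overset{\frown}{XZ}$ (on the circle through $X,Z,Y$) \emph{into} the disk — pushing $Y$ toward the chord $\overline{XZ}$ strictly decreases the circumradius, equivalently strictly increases the Menger curvature. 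This is the discrete substitute for the statement that moving a surface ``inward'' past a foliation leaf forces a curvature inequality, and it is essentially elementary (law of sines, or the fact that $R = \frac{|XZ|}{2\sin\angle XYZ}$ and the inscribed-angle theorem).

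Next I would argue by the discrete maximum-principle / extremal-vertex trick that mimics the proof of Theorem~\ref{thm-trap-slice}. Suppose for contradiction that $\Gamma$ is not inscribed in $S$, so at least one of $P_1,\dots,P_n$ lies strictly inside the closed disk bounded by $S$ (it cannot lie strictly outside, since then the convexity of $\Gamma$ together with $A,B,C\in S$ would be violated — I would check this carefully, as it is the analogue of ``the perturbation is trapped''). Among all vertices $P_i$ that lie strictly inside, pick the one, say $P_j$, that is ``deepest'' in an appropriate sense — concretely, order the vertices by the value of the leaf-parameter coming from the pencil of circles through $A$ and $C$ (or, more simply, by signed distance from $S$ measured along the normal), and take an extremal one. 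At that extremal vertex, both neighbors $P_{j-1}, P_{j+1}$ lie on the ``far side'' (on $S$ or deeper than $P_j$ in the opposite sense), so the circle through $P_{j-1},P_j,P_{j+1}$ lies on one side, and the monotonicity fact above forces $M(P_j) < 1/R$, contradicting the hypothesis $M(P_j)\ge 1/R$. Handling the endpoints $A$ and $B$ of the perturbed chain requires the separate input that $\overset{\frown}{AC}$ is a minor arc (so the triangle $\triangle P_n B C$, resp.\ the triangle at $A$, degenerates in a controlled way), which is exactly why the lemma is stated with that hypothesis; this is the analogue of condition~2 in the trap-slice lemma about the boundary neighborhood.

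Once the contradiction shows every $P_i$ lies on $S$, all triangles $\triangle A P_1 P_2$, $\triangle P_{i-1}P_iP_{i+1}$, $\triangle P_n B C$ are inscribed in $S$, hence every Menger curvature along the chain equals $1/R$, and $\Gamma$ is inscribed in $S$ as claimed. I expect the main obstacle to be the rigorous verification that no $P_i$ can escape \emph{outside} $S$ and that the extremal-vertex selection behaves well at the two ends of the perturbed chain — i.e., correctly formalizing ``trapped'' and ``the boundary is separated from the deep region'' in the polygonal setting, since unlike the smooth case there is no genuine foliation, only a discrete pencil of circles, and one must make sure the extremal vertex is an \emph{interior} vertex of the chain so that both of its neighbors can be used. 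Everything else — the circumradius monotonicity and the final bookkeeping — is routine planar geometry.
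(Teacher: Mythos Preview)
Your proposal takes a route genuinely different from the paper's, and it contains a gap that I do not see how to close.

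The paper does \emph{not} use an extremal-vertex argument. It argues by induction on $n$. First it uses the single hypothesis $M(A)\ge 1/R$ together with the minor-arc condition on $\overset{\frown}{AC}$ to force $P_1$ into the closed upper arch $D_+$ (this is exactly your monotonicity fact, applied once, with the two fixed vertices $A,C$ actually on $S$). Then for the inductive step it \emph{replaces the reference circle}: let $S'$ be the circumcircle of $\triangle AP_1C$, with radius $R'$; one checks $R'\ge R$ and that $\overset{\frown}{AP_1}$ is a minor arc of $S'$, so the polygon $P_1P_2\cdots P_{n}BA$ satisfies the lemma's hypotheses relative to $S'$ with $n-1$ perturbed vertices. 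The change of circle at each step is the key idea and has no analogue in your outline.

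The gap in your approach is this. The Menger curvature at your extremal vertex $P_j$ is $1/R_{\triangle P_{j-1}P_jP_{j+1}}$, and all three of those vertices may be strictly off $S$. Your monotonicity fact controls the circumradius only when \emph{one} vertex moves and the other two stay on a fixed circle; it says nothing here. Nor does ``deepest'' help: by convexity of $\Gamma$ every other vertex lies inside the interior angle at $P_j$, so $\angle P_{j-1}P_jP_{j+1}\ge \angle AP_jB$, and since both angles can be obtuse this pushes the $\sin$ comparison --- hence the circumradius comparison --- in the wrong direction. The depth of $P_j$ (under either ordering you suggest) governs $R_{\triangle AP_jB}$ or $R_{\triangle AP_jC}$, not $R_{\triangle P_{j-1}P_jP_{j+1}}$, and that mismatch is exactly what your sketch does not address.

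A secondary point: the claim that no $P_i$ can lie strictly outside $S$ ``since then convexity of $\Gamma$ together with $A,B,C\in S$ would be violated'' is false. If $\overset{\frown}{AB}$ is a short minor arc near the top of $S$ one can place $P_1$ far above the disk and keep $AP_1BC$ convex. The paper rules this out using the curvature hypothesis $M(A)\ge 1/R$ (its Step~1), not convexity alone.
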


\begin{proof}[Proof of the lemma (and the theorem)]\par
First we prove that if $M(A) \ge 1/R$, then $P_1 \in D_+$. Otherwise, as in the figure, let $P_1 \notin D_+$ and denote the intersection point $Q_1\triangleq CP_1 \cap S_+=\overset{\frown}{AB}$.
Then $\angle AP_1C < \angle AQ_1C < \pi/2$ for the minor arc $\overset{\frown}{AC}$.
Thus $R_{\bigtriangleup AP_1C} > R_{\bigtriangleup AQ_1C} = R$, and $M(A) < 1/R$, which is a contradiction.

We prove the lemma by induction on $n$, the number of the vertices $\{P_i\}$.

\begin{figure}[htbp]
\centering
\subfigure[Figure 2.]{
\begin{minipage}[t]{0.5\linewidth}
\centering
\includegraphics[width=1.0\textwidth]{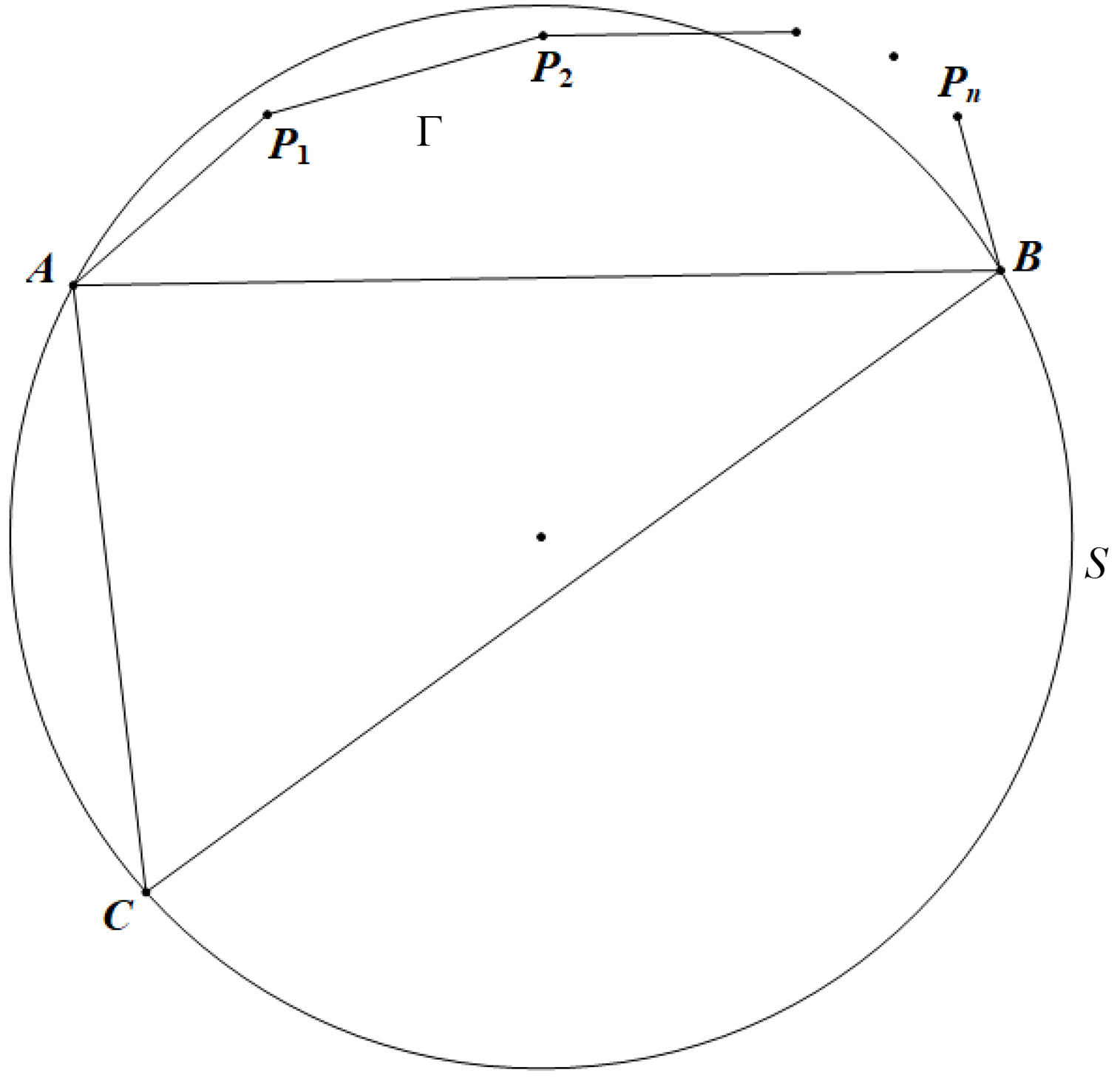}
\end{minipage}
%\caption{Figure 2.}
}%
\subfigure[Figure 3.]{
\begin{minipage}[t]{0.5\linewidth}
\centering
\includegraphics[width=1.0\textwidth]{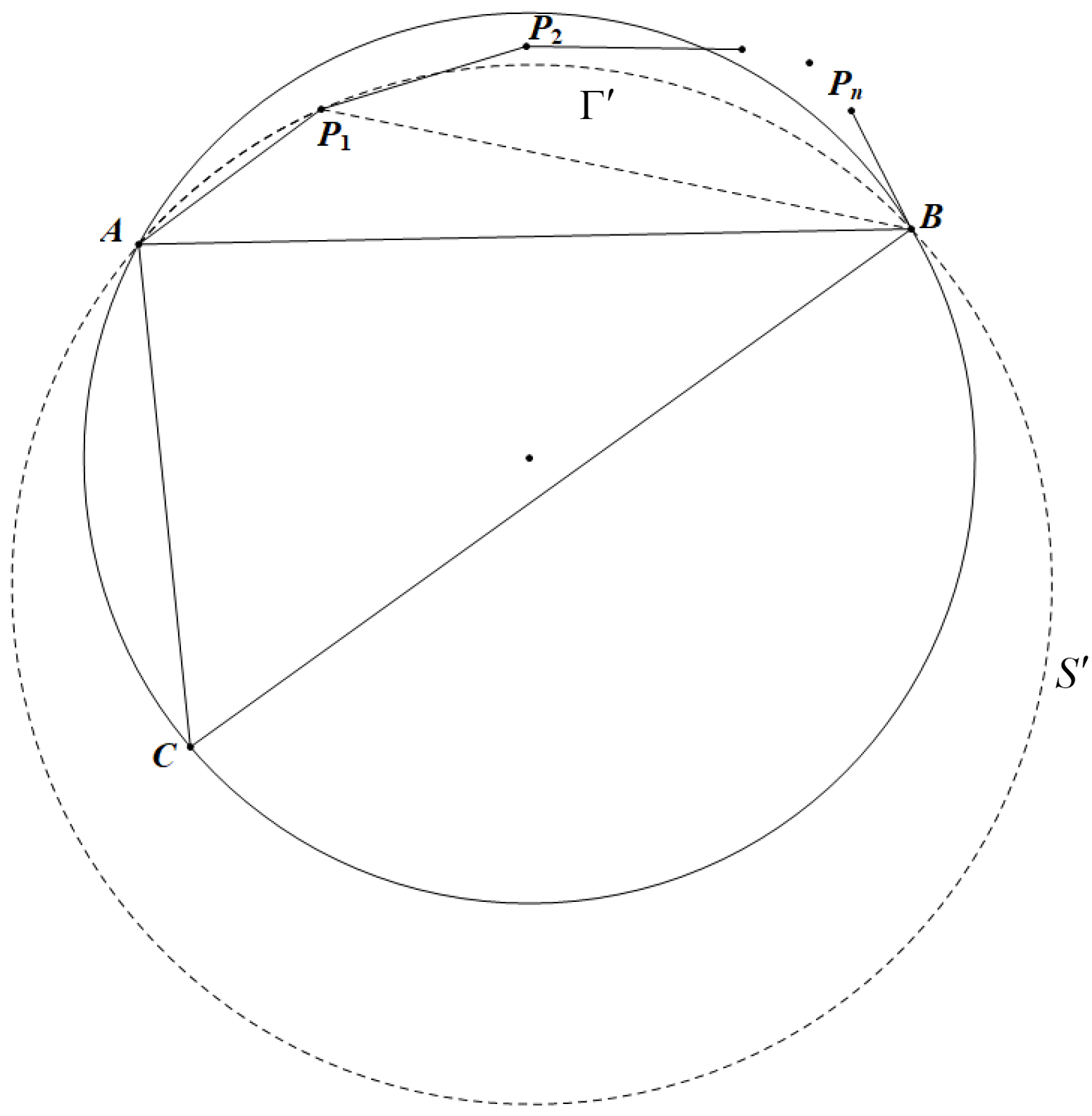}
\end{minipage}
%\caption{Figure 3.}
}%
%\centering
%\caption{pics}
\end{figure}
When $n=1$, the theorem is obviously true, because if $P_1 \in \mathring{S_+}$, then $M(P_1) < 1/R$ for the minor arc $\overset{\frown}{AB}$, which violates the assumption $M(P_1)\ge 1/R$. So there must be $P_1 \in \overset{\frown}{AB}$.

Next, we assume that the lemma holds for $n=N$, and consider $n = N+1$. Define $S'$ as the circumscribed circle of $\bigtriangleup AP_1C$ with radius $R'$. It is clear that $R' \ge R$. Another thing is obvious that $\angle AP_1 B > \pi/2$, so $\overset{\frown}{AC}$ and $\overset{\frown}{AP_1}$ are all minor arcs in $S'$.

As in Figure~3, inside the circle $S'$ let us consider the chord $P_1 B$ and the convex polygon $\Gamma' =P_1 \cdots P_{N+1} B$. By assumption, the Menger curvature at $P_1, \cdots, P_{N+1}$ in $\Gamma'$ are not less than $1/R\ge 1/R'$.

The inductive hypothesis implies that the Menger curvature of $P_1 \cdots P_{N+1}$ in $\Gamma'$ are all equal to $1/R'$. Finally, by assumption they are not less than $1/R$, and $1/R \ge 1/R'$, we conclude that those Menger curvatures are all equal to $1/R$, and $R'=R$, which ends the proof when $n = N+1$. Thus we have proved the lemma and the theorem follows.
\end{proof}

\begin{remark}
The theorem requests that the half central angle $\theta=\frac{1}{2}\angle AOB$ is not greater than $\pi /2$. Actually, when $\theta>\pi / 2$, the theorem will not be true. Consider the unit circle  $S^1 \subset \mathbb{R}^2$, and $A,B \in S^1$, with $y_A = y_B <0$, $C(0,-1),D(0,1)$ in Figure~4. Then we move $D$ to $D'(0, y_0)$, with $0 <y_0 <1$ and $\angle AD'B< \pi / 2$. It is easy to vertify that $M(A),M(D'),M(B) >1$ in the polygon $ACBD'$. This agrees with the non-rigidity of the major circle and the great spherical cap as mentioned before.
\end{remark}

\begin{figure}[htbp]
  \centering
  \subfigure[Figure 4.]{
    \begin{minipage}[t]{0.4\linewidth}
    \centering
    \includegraphics[width=1.0\textwidth]{04.png}
    \end{minipage}
    }
  \subfigure[Figure 5.]{
    \begin{minipage}[t]{0.4\linewidth}
    \centering
    \includegraphics[width=0.7\textwidth]{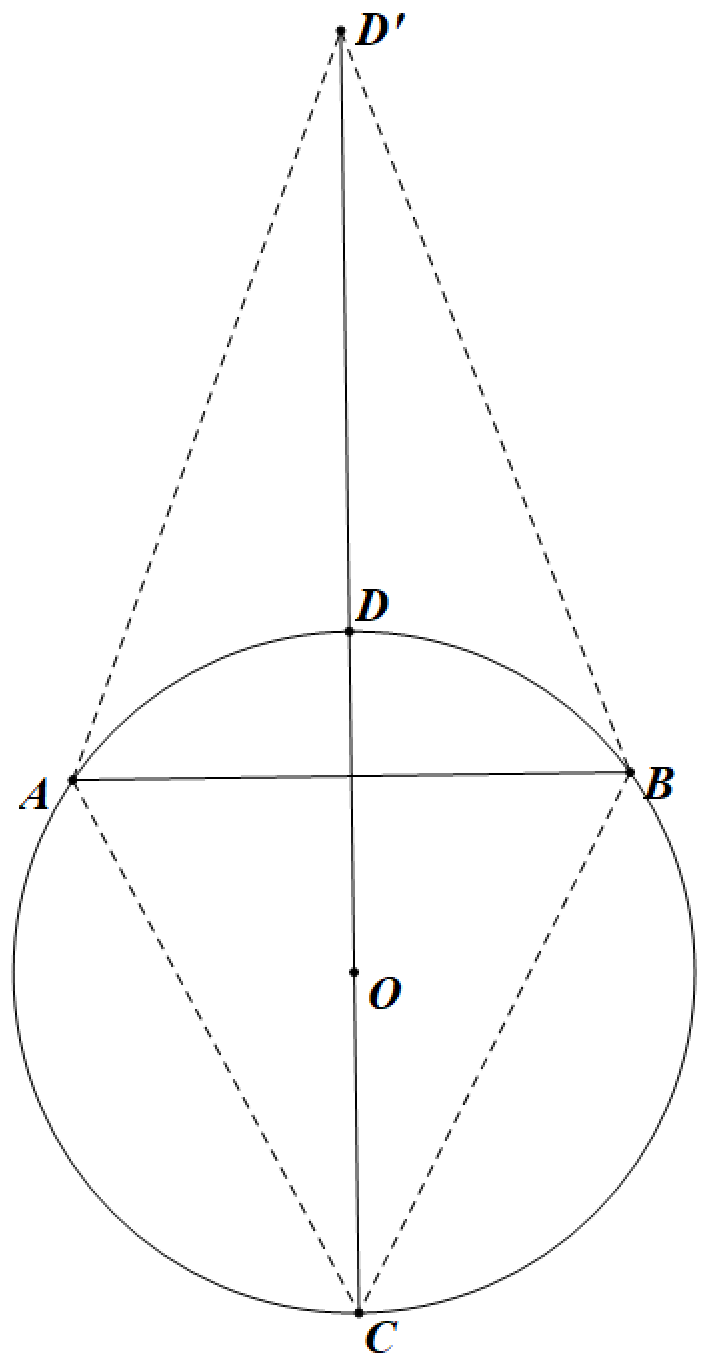}
    \end{minipage}
    }
%\centering
%\caption{pics}
\end{figure}

\begin{remark}
On the other hand, in the theorem above, the condition that the Menger curvatures of $A,P_1, P_2 \cdots P_n, B$ satisfy $M\ge 1/R$ can not be changed to $M\le 1/R$. Still consider  $C(0,-1),D(0,1),A,B \in S^1$ with $y_A=y_B$ in Figure~5, then we move $D$ to $D'(0, y_0')$, s.t. $y_0' > 1$ and $sin\angle AD'B < sin\angle ADB$. It is also easy to vertify that $M(A),M(D'),M(B) <1$ in the polygon $ACBD'$. This seems different from the smooth category. Yet upon close examination you will find this is completely analogous to the bumbbell-shaped deformation we mentioned in Subsection~6.2; see the pictures there and Remark~\ref{remark-dumbbell}. That means that such a nontrivial perturbation exist only when the perturbation is \emph{large enough}; otherwise the rigidity still holds true as in Theorem~\ref{thm-rigid-4}.
\end{remark}

\par
\noindent
\textbf{Acknowledgement.} \par
We would like to thank Baichuan Hu, the classmate of the third author, who suggested the basic idea of the proof in the Appendix. The valuable suggestion of Ying Lv helps to simplify this proof significantly.

\end{document}